\newtheorem{theorem}{Theorem}
\newtheorem{corollary}[theorem]{Corollary}
\newtheorem{lemma}[theorem]{Lemma}
\newtheorem{proposition}[theorem]{Proposition}
\newtheorem{remark}[theorem]{Remark}
\newenvironment{proof}[1][Proof]{\noindent\textbf{#1.} }{\ \rule{0.5em}{0.5em}}
\let\Im\relax
\DeclareMathOperator{\Im}{Im}
\let\Re\relax
\DeclareMathOperator{\Re}{Re}
\begin{document}
\title{On the distribution of zeros of the derivative of Selberg's zeta function
associated to finite volume Riemann surfaces}
\author{Jay Jorgenson and Lejla Smajlovi\'{c}
\footnote{The first named
author acknowledges support from NSF and PSC-CUNY grants.
}}
\date{10 November 2012}

\maketitle \abstract In 1934, A.~Speiser proved that the Riemann
hypothesis for the Riemann zeta function $\zeta_{\mathbb Q}$ is
equivalent to the statement that its derivative $\zeta'_{\mathbb
Q}(s)$ is non-vanishing in the half-plane $\textrm{Re}(s) < 1/2$, see \cite{Sp34}. Further results in the study of the zeros of
$\zeta'_{\mathbb Q}$ were established by Spira \cite{Sp73} and
Levinson-Montgomery \cite{LM74}.  The theorems from \cite{LM74}
played an important role in Levinson's proof that at least one-third of
all non-trivial zeros of the Riemann zeta function $\zeta_{\mathbb Q}(s)$ are located
on the critical line $\text{\rm Re}(s) = 1/2$;  see \cite{Le74} for Levinson's paper as well
as \cite{Se42} for Selberg's classical article in this direction.
With these results, it then becomes important to
study the zeros of derivatives of
zeta functions.

\vskip .05in
In \cite{Luo05}, W.~Luo investigated the
distribution of zeros of the derivative of the Selberg zeta
function associated to compact hyperbolic Riemann surfaces.  In
essence, the main results in \cite{Luo05}, which were extended in
\cite{Gar08}, \cite{Gar09}, \cite{Min08} and \cite{Min10}, involve
the following three points: Finiteness for the number of zeros in
the half plane $\Re (s)<1/2$; an asymptotic expansion for the counting function
measuring the vertical distribution of zeros; and an asymptotic expansion
for the counting function measuring
the horizontal distance of zeros from the critical line.  In the
present article, we study the more complicated setting of
distribution of zeros of the derivative of the Selberg zeta
function associated to a non-compact, finite volume hyperbolic
Riemann surface $M$. There are numerous difficulties which exist
in the non-compact case that are not present in the compact
setting, beginning with fact that in the non-compact case the
Selberg zeta function does not satisfy the analogue of the Riemann
hypothesis.  To be more specific, we actually study the zeros of
$(Z_{M}H_{M})'$, where $Z_{M}$ is the Selberg zeta function and
$H_{M}$ is the Dirichlet series component of the scattering
matrix, both associated to an arbitrary finite-volume hyperbolic
Riemann surface $M$.  As in the above mentioned articles, our
main results address finiteness of zeros in the half plane $\Re (s)<1/2$, an
asymptotic count for the vertical distribution of zeros, and an
asymptotic count for the horizontal distance of zeros.

\vskip .05in
Generally speaking, the philosophy behind the Phillips-Sarnak conjecture suggests that the spectral
analysis of the Laplacian acting on smooth functions on $M$
should depend on the arithmetic nature of the underlying Fuchsian group $\Gamma$.
One realization of the spectral analysis of the Laplacian is the location of
zeros of $Z_{M}$.  Our analysis yields an invariant $A_{M}$ which appears in the vertical
and horizontal distribution of zeros of $(Z_{M}H_{M})'$, and we show that $A_{M}$ has
different values for surfaces associated to two topologically equivalent yet
different arithmetically defined Fuchsian groups. We view this aspect of our main theorem as
both supporting the Phillips-Sarnak philosophy and indicating the existence of further
spectral phenomena which provides a additional refinement within the
set of arithmetically defined Fuchsian groups.
\endabstract

\section{Introduction}

\subsection{Classical results for the Riemann zeta function}

In \cite{Sp34}, A.~Speiser proved that the Riemann hypothesis for
the Riemann zeta function $\zeta_{\mathbb Q}$ is equivalent to
proving that its derivative $\zeta'_{\mathbb Q}(s)$ is
non-vanishing for $\textrm{Re}(s) < 1/2$.  As a consequence of
Speiser's theorem, and recognizing the Riemann hypothesis as a
mathematical question of unparalleled historical and mathematical
significance, one now recognizes the need to study the zeros of
$\zeta'_{\mathbb Q}$, the derivative of the Riemann zeta
function.  The most ambitious goal would be to obtain a precise
description of the location of the zeros of $\zeta'_{\mathbb
Q}$, analogous to the Riemann hypothesis itself for
$\zeta_{\mathbb Q}$.  At this time, a more realistic goal would
be to investigate distributional questions associated to the zeros
of $\zeta'_{\mathbb Q}$, with or without employing underlying
assumptions or hypotheses associated to the location of zeros of
$\zeta_{\mathbb Q}$, such as the classical Riemann hypothesis.

\vskip .10in Let $N_{\textrm{ver}}(T;\zeta_{\mathbb Q})$ denote
the number of zeros $s=\sigma + it$ of the Riemann zeta function
$\zeta_{\mathbb Q}(s)$ such that $\sigma \in (0,1)$ and $0 < t < T$.  Classically, it is known that
\begin{equation}\label{verRiemann}
N_{\textrm{ver}}(T;\zeta_{\mathbb Q}) = \frac{T}{2\pi} \log
(T/2\pi) - \frac{T}{2\pi} + O(\log T), \,\,\,\,\,\textrm{as $T
\rightarrow \infty$.}
\end{equation}
In \cite{Ber70}, vertical counting function $N_{\textrm{ver}}(T;\zeta'_{\mathbb Q})$ of
non-trivial zeros of $\zeta'_{\mathbb Q}$ is studied; it is shown that
\begin{equation}\label{verRiemannderiv}
N_{\textrm{ver}}(T;\zeta'_{\mathbb Q}) = \frac{T}{2\pi} \log
(T/2\pi) - (1 + \log 2))\frac{T}{2\pi} + O(\log T),
\,\,\,\,\,\textrm{as $T \rightarrow \infty$.}
\end{equation}
As noted in \cite{LM74}, the above results can be combined to
yield
$$
N_{\textrm{ver}}(T;\zeta_{\mathbb Q}) =
N_{\textrm{ver}}(T;\zeta'_{\mathbb Q}) + T\cdot \frac{\log
2}{2\pi} + O(\log T), \,\,\,\,\,\textrm{as $T \rightarrow \infty$.}
$$
Similar results for higher derivatives $\zeta^{(k)}_{\mathbb
Q}$ of the Riemann zeta function are also given in \cite{Ber70}.

\vskip .10in The counting function
$$
N_{\textrm{hor}}(T;\zeta'_{\mathbb Q}) =
\sum\limits_{\zeta'_{\mathbb Q}(\sigma + it) = 0 \atop 0 < t < T ,
1/2 < \sigma < 1} (\sigma - 1/2)
$$
is defined to study the horizonal distribution of the zeros of
$\zeta'_{\mathbb Q}$.  From \cite{LM74}, Theorem 5. it is easy to deduce that
\begin{equation}\label{horRiemannderiv}
N_{\textrm{hor}}(T;\zeta'_{\mathbb Q}) \simeq \frac{T}{2\pi}\log
\log (T/2\pi), \,\,\,\,\,\textrm{as $T \rightarrow \infty$.}
\end{equation}
Obviously, the Riemann hypothesis addresses the behavior of the
horizontal counting function $N_{\textrm{hor}}(T;\zeta_{\mathbb Q})$, asserting that
the function is identically zero.

\vskip .10in  In
\cite{Le74}, N. Levinson used (\ref{verRiemannderiv}) to prove
that at least one-third of the non-trivial zeros of $\zeta_{\mathbb Q}(s)$ lie on the
critical line $\text{\rm Re}(s) = 1/2$.  Refined studies involving the
horizontal distribution (\ref{horRiemannderiv}) are given in
\cite{So98}, with additional results in the following articles:
\cite{CG90}, \cite{Fe05}, \cite{GY07}, \cite{Ki08}, \cite{Ng08}
and \cite{Z_M H_M01}, to name a few.

\vskip .10in Within the field of random matrix theory, there are
many points where the zeros of the Riemann zeta function arise:
see the survey article \cite{Di03} and references therein.  Beyond
this, the article \cite{Me03} introduces a connection between
zeros of $\zeta'_{\mathbb Q}$ and random matrix theory, thus
further highlighting the results (\ref{verRiemannderiv}) and
(\ref{horRiemannderiv}).  Additional investigations into the zeros
of the derivative of the Riemann zeta function continue, with new
results appearing frequently:  see \cite{BPS10}, \cite{DFFHMP} and
\cite{FK10}.

\vskip .10in
\subsection{Selberg zeta functions for compact Riemann surfaces}

\vskip .10in Naturally, a generalization of (\ref{verRiemann}),
(\ref{verRiemannderiv}) and (\ref{horRiemannderiv}) can be considered
for any zeta and $L$-function arising from number theory or
elsewhere.  In \cite{Luo05}, W.~Luo initiated the study of the
zeros of the derivative $Z_{M}'$ of the Selberg zeta function $Z_{M}$ associated to a
compact, hyperbolic Riemann surface $M$, proving analogues of
(\ref{verRiemannderiv}) and (\ref{horRiemannderiv}).  Further
refinements of the horizontal and vertical counting functions
$N_{\textrm{hor}}(T;Z'_{M})$ and $N_{\textrm{ver}}(T;Z'_{M})$ were
established in \cite{Gar08} and \cite{Gar09}.  Let us summarize
the three main results, which are location, vertical distribution,
and horizontal distribution of the zeros of $Z'_{M}$.

\vskip .10in  In \cite{Luo05} it is shown that $Z'_{M}(s)$ has at
most a finite number of non-trivial zeros in the half-plane
$\textrm{Re}(s) < 1/2$.  This result was strengthened in
\cite{Min08} and \cite{Min10} where it is  proved that $Z'_{M}(s)$
has no non-trivial zeros in the half-plane $\textrm{Re}(s) < 1/2$.

\vskip .10in  Let $\textrm{vol}(M)$ denote the hyperbolic volume
of $M$.  Let $\ell_{M,0}$ denote the length of the shortest closed
geodesic on $M$; a shortest closed geodesic on $M$ sometimes is called a
\textit{systole} of $M$. Let
$m_{M,0}$ denote the number of inconjugate geodesics whose length
is $\ell_{M,0}$. Let $N_{\textrm{ver}}(T;Z'_{M})$ denote the
number of non-trivial zeros of $Z'_{M}(s)$ with height bounded by
$T$; in other words, where $s=\sigma + it$ with $\sigma\geq 1/2$
and $0 < t < T$. Extending the results in \cite{Luo05}, it is
proved in \cite{Gar08} and \cite{Gar09} that

\begin{equation}\label{verSelbergderivcpt}
N_{\textrm{ver}}(T;Z'_{M}) = \frac{\textrm{vol}(M)}{4\pi}T^{2} -
\frac{\ell_{M,0}}{2\pi}T + o(T) \,\,\,\,\,\textrm{as $T
\rightarrow \infty$.}
\end{equation}

\vskip .10in Let
$$
N_{\textrm{hor}}(T;Z'_{M}) = \sum\limits_{Z_{M}(\sigma + it) = 0
\atop 0 < t < T ,  \sigma > 1/2} (\sigma - 1/2)
$$
Then, building on the results form \cite{Luo05}, it is proved in
\cite{Gar08} and \cite{Gar09} that

\begin{equation}\label{horSelbergderivcpt}
N_{\textrm{hor}}(T;Z'_{M}) = \frac{T}{2\pi} \log T +
\frac{T}{2\pi} \left( \frac{1}{2} \ell_{M,0}+ \log \left(
\frac{\textrm{vol}(M)(1-\exp(-\ell_{M,0})}{ m_{M,0}
\ell_{M,0}}\right)-1 \right)  + o(T)
 \,\,\,\,\,\textrm{as $T \rightarrow \infty$.}
\end{equation}

\vskip .10in The study of the zeros of $Z'_{M}$ is of
particular interest because of the connection with spectral
analysis.  Recall that if $s$ is a non-trivial zero of $Z_{M}(s)$,
then $\lambda = s(1-s)$ is an eigenvalue of an
$L^{2}$-eigenfunction of the hyperbolic Laplacian which acts on
the space of smooth functions on $M$.  Common zeros of $Z_{M}$ and
$Z'_{M}$ are multi-zeros of $Z_{M}(s)$, which, for non-trivial zeros of $Z_{M}$, correspond
to multi-dimensional eigenspaces of the Laplacian. As shown on page 1143 of \cite{Luo05},
all zeros of $Z'_{M}(s)$ on the line
$\textrm{Re}(s) = 1/2$, except possibly at $s=1/2$,
correspond to multiple zeros of $Z_{M}$.  The problem of
obtaining non-trivial bounds for the
dimension of eigenspaces of the Laplacian is very difficult;
see page 160 of \cite{Iw02}.  It is possible that refined information
regarding (\ref{verSelbergderivcpt}) could possibly shed light on
this important, outstanding question.

\vskip .10in
\subsection{Non-compact Riemann surfaces}

Let $\mathbb H$ denote the hyperbolic upper half plane.
Let $\Gamma\subseteq\mathrm{PSL}_{2}(\mathbb{R})$ be
any Fuchsian group of the first kind acting by fractional linear transformations
on $\mathbb H$, and let
$M$ be the quotient space $\Gamma\backslash\mathbb H$.
The question of studying the zeros of $Z'_{M}$ when $M$ is not compact begins with
one possible difficulty stemming from the structure of the Selberg
zeta function.
As stated on page 498 of \cite{Hej83} as well as
page 48 of \cite{Ve90}, the Selberg zeta function itself has an
infinite of zeros in the half-plane $\textrm{Re}(s) < 1/2$, namely
at all the points where $\phi_{M}(s)$, the determinant of the
scattering matrix, has poles.  As we see with the Selberg zeta
function for compact surfaces, and with the Riemann zeta function
assuming the Riemann hypothesis, it seems necessary to study a
function which itself has only trivial zeros in the left-half
plane $\textrm{Re}(s) < 1/2$.  It is from this point that the
analysis of the present paper begins.

\vskip .10in
The function $\phi_{M}(s)$ has a decomposition into a
product of a general Dirichlet series and Gamma functions.
Specifically, from \cite{Iw02}, \cite{Hej83} or \cite{Ve90}, we
can write
$$
\phi_{M}(s) = \pi ^{\frac{n_{1}}{2}}\left( \frac{\Gamma \left(
s-\frac{1}{2} \right) }{\Gamma \left( s\right) }\right)
^{n_{1}}\overset{\infty }{\underset {n=1}{\sum }}\frac{d\left(
n\right) }{\mathfrak{g}_{n}^{2s}}
$$
where $n_{1}$ is the number of cusps of $M$, and $\{d(n)\}$ and
$\{\mathfrak{g}_{n}\}$ are sequences of real numbers with
$$
0<\mathfrak{g}_{1}<...<\mathfrak{g}_{n}<\mathfrak{g}_{n+1}<...;
$$
see also page 33, Theorem 1.5.3 of \cite{Fis87}.  Let us write
$$
\phi_{M}(s) = K_{M}(s)\cdot H_{M}(s)
$$
where
\begin{equation}\label{constants_c1_c2}
K_{M}\left( s\right) =\pi ^{\frac{n_{1}}{2}}\left( \frac{\Gamma \left( s-\frac{1%
}{2}\right) }{\Gamma \left( s\right) }\right) ^{n_{1}}e^{c_{1}s+c_{2}}\,\,\,\,\,\text{%
with}\,\,\,\,\,c_{1}=-2\log \mathfrak{g}_{1}\,\,\,\,\,\text{%
and}\,\,\,\,\, c_{2} = \log d(1),
\end{equation}
and
\begin{equation}\label{Dirichletseriespart}
H_{M}\left( s\right) =1+\overset{\infty }{\underset{n=2}{\sum
}}\frac{a\left( n\right) }{r_{n}^{2s}}
\,\,\,\,\,\text{%
with}\,\,\,\,\,r_{n}=\mathfrak{g}_{n}/\mathfrak{g}_{1} > 1\,\,\,\,\,\text{%
and}\,\,\,\,\,a(n) = d(n)/d(1).
\end{equation}
The Dirichlet series expansion for $H_{M}(s)$ converges for all
$\textrm{Re}(s) > 1$.  We call the function $H_{M}$ the Dirichlet series
portion of the scattering determinant $\phi_{M}$.

\vskip .10in  In general, the function $H_{M}$ can be expressed as the determinant
of a matrix whose entries are general Kloosterman
sums; see, for example, Theorem 3.4, page 60 of \cite{Iw02} as
well as Chapter 4 of \cite{Iw97}.  The constants $\mathfrak{g}_{1}$ and $\mathfrak{g}_{2}$
are explained in terms of the left lower entries of the matrices appearing in the double
coset decomposition of $\Gamma$. Therefore, the constants $\mathfrak{g}_{1}$ and
$\mathfrak{g}_{2}$ are precisely connected to the Fuchsian group $\Gamma$.

\vskip .10in  Since $\phi_{M}$ satisfies the function equation
$\phi_{M}(s)\phi_{M}(1-s)=1$, the zeros and poles of $\phi_{M}$
are symmetrically located about the critical line $\Re(s) = 1/2$.
Furthermore, from Theorem 5.3, page 498 of \cite{Hej83}, one has
that $Z_{M}\phi_{M}$ has no non-trivial zeros in the half-plane
$\text{\rm Re}(s) < 1/2$.  Consequently, the function $Z_{M}H_{M}$
has no non-trivial zeros in the half-plane $\text{\rm Re}(s) <
1/2$. In fact,
$(Z_{M}H_{M})(s)$ is a holomorphic function on $\mathbb{C}\backslash
\left( -\infty ,1/2\right] $.

\vskip .10in As a result, rather than study zeros of $Z'_{M}$,
we shall study the zeros of $(Z_{M}H_{M})'$.

\vskip .10in
\subsection{The main result}

\vskip .10in The function $H_{M}^{\prime }/H_{M}$ has admits the
general Dirichlet series expansion
\begin{equation} \label{Hlogder}
\frac{H_{M}^{\prime }}{H_{M}}\left( s\right)
=\sum _{i=1}^{\infty}\frac{b\left( q_i \right)
}{q_i^{s}}\text{,}
\end{equation}
where series on the right converges absolutely and uniformly for $\text{Re}(s)
\geq \sigma_{0}+\epsilon >\sigma_0$ for sufficiently large $\sigma_{0}$ and where $\left\{ q_i \right\} $
is a non-decreasing sequence of positive real numbers consisting of all
finite products of numbers $r_{n}^{2}>1$. Obviously, $q_2 > q_1 = \inf q_i = \left(
\mathfrak{g}_{2}/\mathfrak{g}_{1}\right) ^{2}$. Furthermore,
$$
b(q_1) = - a(2) \log q_1 = -2 (d(2)/d(1)) \log (\mathfrak{g}_{2}/\mathfrak{g}_{1}).
$$

\vskip .10in Let $\ell_{M,0}$ be the length of a shortest closed
geodesic, or systole, on $M$.  With our notation from above, let
\begin{equation}\label{definitionofA}
A_M=\min \left\{e^{\ell_{M,0}},\left( \mathfrak{g}_{2}/\mathfrak{g}%
_{1}\right) ^{2}\right\}.
\end{equation}
Here, we have dropped the subscript $M$ on $\left(
\mathfrak{g}_{2}/\mathfrak{g}_{1}\right) ^{2}$ in order to ease the notation; however,
it is clear that $\left(
\mathfrak{g}_{2}/\mathfrak{g}_{1}\right) ^{2}$ depends on $M$.
Let $m_{M,0}$ denote the number of inconjugate closed geodesics on
$M$ with length $\ell_{M,0}$. If $e^{\ell_{M,0}} \neq \left( \mathfrak{g}_{2}/\mathfrak{g}
_{1}\right) ^{2}$, let
\begin{equation}\label{definitionofa1}
a_{M} = \left\{\begin{matrix}\displaystyle
\frac{m_{M,0}\ell_{M,0}}{1-e^{-\ell_{M,0}}}; & \textrm{if} \, \,
e^{\ell_{M,0}} < \left( \mathfrak{g}_{2}/\mathfrak{g} _{1}\right) ^{2} \\ \\b(\left(
\mathfrak{g}_{2}/\mathfrak{g}_{1}\right) ^{2}); & \textrm{if} \, e^{\ell_{M,0}} > \left( \mathfrak{g}_{2}/\mathfrak{g} _{1}\right) ^{2}
\end{matrix} \right\}.
\end{equation}
If $e^{\ell_{M,0}}=\left( \mathfrak{g}_{2}/\mathfrak{g}
_{1}\right) ^{2}$, let

\begin{equation}\label{definitionofa1two}
a_{M} = \frac{m_{M,0}\ell_{M,0}}{1-e^{-\ell_{M,0}}}+ b(\left(
\mathfrak{g}_{2}/\mathfrak{g}_{1}\right) ^{2}).
\end{equation}

\noindent
Observe that $a_{M}$ is the sum of the two terms which appear in the two cases in (\ref{definitionofa1}),
not the arithmetic average as one would expect from elementary Fourier analysis.

\vskip .10in With all this, the main result of this article is the
following.

\it \vskip .10in \noindent\textbf{Theorem.} Let $\Gamma\subseteq\mathrm{PSL}_{2}(\mathbb{R})$ be
any Fuchsian group of the first kind acting by fractional linear transformations
on $\mathbb H$, and let
$M$ be the quotient space $\Gamma\backslash\mathbb H$.
Let $Z_{M}(s)$ be the associated
Selberg zeta function, and $H_{M}(s)$ be the Dirichlet series
portion of the determinant of the associated scattering matrix.

\vskip .10in
a) There are a finite number of non-trivial zeros of $(Z_{M}H_{M})'(s)$ in
the half-plane $\text{\rm Re}(s) < 1/2$.  In addition, there exist some $t_0>0$ such that any zero
of $(Z_{M}H_{M})'(s)$ on the line $\Re (s)=1/2$ with property $\vert \Im (s) \vert >t_0$
arises from a multiple zero of $Z_M(s)$.

\vskip .10in
b) Let us define the vertical counting function
\rm
$$
N_{\textrm{ver}}(T;(Z_{M}H_{M})') = \#\{\rho = \sigma + it \,\big|
\, (Z_{M}H_{M})'(\rho) = 0 \,\,\,\textrm{with}\,\,\, 0 < t < T\}.
$$
\it
Then
\rm
$$
N_{\textrm{ver}}(T;(Z_{M}H_{M})')=\frac{\textrm{\rm vol}(M) }{4\pi }T^{2}-\frac{T}{2\pi }%
\left( \log A_M+2n_{1}\log 2+2\log \mathfrak{g}_{1}\right)
+o(T),\,\,\textit{as $T\rightarrow \infty$.}
$$
\it
In particular, if $M$ is co-compact, then \eqref{verSelbergderivcpt} holds true.
\vskip .05in
\it
c) Let us define the horizontal counting function
\rm
$$
N_{\textrm{hor}}(T;(Z_{M}H_{M})') =
\sum\limits_{(Z_{M}H_{M})'(\sigma + it) = 0 \atop 0 < t < T
\,\,\textrm{\rm and}\,\, \sigma > 1/2} (\sigma - 1/2).
$$
\it
Then
\rm
\begin{align*}
N_{\textrm{hor}}(T;(Z_{M}H_{M})') &=\left( \frac{n_{1}}{%
2}+1\right) \frac{T\log T}{2\pi }+\frac{T}{2\pi }\left( \log \frac{%
\textrm{\rm vol}(M)  A_M^{1/2}}{\left\vert a_{M}\right\vert }%
-1\right)  \\& +\frac{T}{2\pi }\left( \log \left(
\frac{\mathfrak{g}_{1}}{\pi
^{n_{1}/2}\left\vert d(1)\right\vert }\right) -\frac{n_{1}}{2}\right) +o(T)%
,\,\,\textit{as $T\rightarrow \infty$.}
\end{align*}
\it
In particular, if $M$ is co-compact, then \eqref{horSelbergderivcpt} holds true.
\rm
\vskip .10in
As stated in the Theorem, the above asymptotic formulas specialize in the case $M$ is
compact to give the main results in \cite{Luo05}, \cite{Gar08}, \cite{Gar09},
\cite{Min08}, \cite{Min08b} and \cite{Min10}.  Similar results for zeros of
higher derivatives of $Z_{M}H_{M}$ are presented in a later section.  In
addition, corollaries of the main theorem, analogous to results from \cite{LM74},
are derived.

\vskip .10in
\subsection{Remarks concerning the Main Theorem}

\vskip .10in
Aspects of the spectral analysis of the Laplacian acting on smooth functions on a hyperbolic Riemann
surface can be measured by studying the zeros of the Selberg zeta function.  As we discussed above,
one equivalently can study the zeros of $Z_{M}H_{M}$.  Indeed, the Selberg zeta function can be constructed
using its divisor, which comes from the eigenvalues of the Laplacian and poles of the scattering matrix
(see page 498 of \cite{Hej83}) together with general characterizing properties associated to its asymptotic
behavior as $\textrm{\rm Re}(s) \rightarrow +\infty$.  Therefore, by slight extension, the zeros of
$(Z_{M}H_{M})'$ provide another measure, in a sense, of the spectral analysis of the Laplacian.  In
this regard, the quantity $\left( \mathfrak{g}_{2}/\mathfrak{g} _{1}\right) ^{2}$ is a new spectral
invariant.  Additionally, our Main Theorem indicates that for any given surface, the spectral analysis
depends on the comparison of $e^{\ell_{M,0}}$ and $\left( \mathfrak{g}_{2}/\mathfrak{g} _{1}\right) ^{2}$.

\vskip .10in
In section 7, we will show that for congruence subgroups one has the inequality
$e^{\ell_{M,0}} > \left( \mathfrak{g}_{2}/\mathfrak{g} _{1}\right) ^{2}$.  However, this inequality
does not hold for all arithmetic groups.  We examine in detail the two ''moonshine groups''
$\Gamma_{0}^{+}(5)$ and $\Gamma_{0}^{+}(6)$.  These two groups are arithmetic and have the same
topological signature.  However, for $\Gamma_{0}^{+}(5)$, we have that $e^{\ell_{M,0}} < \left( \mathfrak{g}_{2}/\mathfrak{g} _{1}\right) ^{2}$ whereas for $\Gamma_{0}^{+}(6)$ we have that
$e^{\ell_{M,0}} > \left( \mathfrak{g}_{2}/\mathfrak{g} _{1}\right) ^{2}$.  We find it very interesting
that, in the sense of our Main Theorem, not all arithmetic surfaces, even those with the
same topological signature, have the same behavior.

\vskip .10in
Also in section 7, we argue that if one considers a degenerating family of hyperbolic Riemann surfaces
within the moduli space of surfaces of fixed topological type, one eventually has the
inequality $e^{\ell_{M,0}} < \left( \mathfrak{g}_{2}/\mathfrak{g} _{1}\right) ^{2}$ near the boundary.
As a result, if one begins with congruence group and degenerates the corresponding surface,
one will ultimately encounter
a surface where $e^{\ell_{M,0}} = \left( \mathfrak{g}_{2}/\mathfrak{g} _{1}\right) ^{2}$.
More generally, however, it seems as if moduli space can be separated into sets defined by the
sign of $e^{\ell_{M,0}} - \left( \mathfrak{g}_{2}/\mathfrak{g} _{1}\right) ^{2}$ where most, but not all,
arithmetic surfaces are in the component
where $e^{\ell_{M,0}} - \left( \mathfrak{g}_{2}/\mathfrak{g} _{1}\right) ^{2}>$,
and the Deligne-Mumford boundary lies in the component where
$e^{\ell_{M,0}} - \left( \mathfrak{g}_{2}/\mathfrak{g} _{1}\right) ^{2}< 0$.

\vskip .10in
We could not explicitly construct a surface where
$e^{\ell_{M,0}} - \left( \mathfrak{g}_{2}/\mathfrak{g} _{1}\right) ^{2}=0$, even though
we prove, in section 7, that such surfaces exist.

\vskip .10in
\subsection{A comparison of counting functions}

\vskip .10in On page 456 of \cite{Hej83}, D.~Hejhal establishes
the asymptotic behavior of the horizontal distribution of zeros of
$\phi_{M}$ within the critical strip.  In our notation, the zeros
of $\phi_{M}$ within the critical strip coincide with the zeros of
the Dirichlet series $H_{M}$, so then Theorem 2.22, page 456 of
\cite{Hej83} establishes the asymptotic behavior of the horizontal counting function
$N_{\textrm{hor}}(T;H_{M})$.

\vskip .10in Let $M$ be any finite volume hyperbolic Riemann
surface. We claim theres exist a co-compact hyperbolic
Riemann surface $\widetilde{M}$ such that $\textrm{vol}(M) = \textrm{vol}(\widetilde{M})$, $\ell_{M,0} = \ell_{\widetilde{M},0}$ and $m_{M,0} = m_{\widetilde{M},0}$, which we argue as follows.  In the case when the
number $n_1$ of cusps of the surface $M$ is even, we choose the surface $\widetilde{M}_1$ to be any
co-compact surface with genus $g_{\widetilde{M}}= g_M + n_1/2$ and the same structure of elliptic points as
$M$, hence $\textrm{vol}(M) = \textrm{vol}(\widetilde{M}_1)$.
If the number of cusps of the surface $M$ is odd, we choose the surface
$\widetilde{M}_1$ to be any co-compact surface with genus $g_{\widetilde{M}}= g_M + (n_1-1)/2$ such that it has the same structure of elliptic points as $M$, plus one additional elliptic point of order 2. By the Gauss-Bonnet formula, $\textrm{vol}(M) = \textrm{vol}(\widetilde{M}_1)$.  We then deform the surface $\widetilde{M}_1$
in moduli space so that its shortest geodesic has the length equal to $\ell_{M,0}$ and the number of
inconjugate geodesics of length $\ell_{M,0}$ is $m_{M,0}$.

\vskip .10in
Assume that $M$ is such that, in the notation of
(\ref{definitionofA}), $A_{M} = \exp(\ell_{M,0})$. Then, as we
will prove in a later section, one can combine Hejhal's theorem regarding $N_{\textrm{hor}}(T;H_{M})$
with part (c) of the Main Theorem to
establish the simple asymptotic relation
\begin{equation}\label{countcomparison}
N_{\textrm{hor}}(T;(Z_{M}H_{M})')
=N_{\textrm{hor}}(T;Z'_{\widetilde{M}}) +
N_{\textrm{hor}}(T;H_{M}) +o(T)  \,\,\,\text{as $T\rightarrow
\infty$.}
\end{equation}

\noindent
In a later section, we will show that the relation
\eqref{countcomparison} holds true when the derivative is replaced
by the $k$th derivative, for all $k\geq2$.

\vskip .10in We find it very interesting that, in the case when $M$
is such that $\exp(\ell_{M,0}) < \left(
\mathfrak{g}_{2}/\mathfrak{g}_{1}\right) ^{2}$, the
coefficients of the first two terms, namely $T\log T$ and $T$,  in
the asymptotic development of the counting function
$N_{\textrm{hor}}(T;(Z_{M}H_{M})')$ coincide with known results,
namely Hejhal's theorem and (\ref{horSelbergderivcpt}).
The comparison (\ref{countcomparison}) is vaguely
reminiscent of the main result of \cite{TZ}.  In that article, the
authors compute the curvature of a determinant line bundle on the
moduli space of finite volume hyperbolic Riemann surfaces, showing
that the curvature form consists of two parts:  One part related to
the curvature form in the compact case, and a second part defined
using parabolic Eisenstein series.

\vskip .10in
\subsection{Further comments}

Weyl's law in its classical form evaluates
the lead asymptotic behavior of the vertical counting function
$N_{\textrm{ver}}(T;Z_{M})$ for compact $M$.  As far as is known, the
expansion in $T$ involves $\textrm{vol}(M)$ and no other information associated
to the uniformizing group $\Gamma$.  If $M$ is non-compact,
the generalization of Weyl's law addresses the asymptotic behavior of
\begin{equation}\label{weyllawfunction}
\#\{\lambda_{j,M} < 1/4+T^{2} \} - \frac{1}{4\pi}\int\limits_{-T}^{T}
\phi'_{M}/\phi_{M}(1/2+ir)dr
\end{equation}
where $\lambda_{j,M}$ is the eigenvalue of an $L^{2}$ eigenfunction on $M$.
The asymptotic expansion of (\ref{weyllawfunction}) is recalled below (formula \eqref{Weyl}) and, as in the compact case, all terms in the
expansion involve elementary quantities associated to the uniformizing group
$\Gamma$.
\vskip .10in In section 9.1, we will express the function in
(\ref{weyllawfunction}) in terms of $N_{\textrm{ver}}(T;Z_{M}H_M)$,
obtaining an expression which involves the constant
$\mathfrak{g}_{1}$.  As a result, we accept the appearance of the
term $\mathfrak{g}_{2}/\mathfrak{g}_{1}$ in our Main Theorem as
being an appropriate generalization of a version of Weyl's law.

\vskip .10in In a different direction, if one considers a
degenerating family of finite volume hyperbolic Riemann surfaces,
then it was shown in \cite{HJL97} that the asymptotic behavior of
the associated sequence of vertical counting functions
$N_{\textrm{ver}}(T;Z_{M})$ has lead asymptotic behavior, for
fixed $T$, which involves the lengths of the pinching geodesics;
see Theorem 5.5 of \cite{HJL97}. As a result, we do not view the
appearance of the invariant $\ell_{M,0}$ in
(\ref{verSelbergderivcpt}) and (\ref{horSelbergderivcpt}) as a new
feature when using Weyl's laws to understand refined information
associated to the uniformizing group $\Gamma$.

\vskip .10in However, we find the appearance of the constants
$A_{M}$ and $a_{M}$, as defined in (\ref{definitionofA}),
(\ref{definitionofa1}) and (\ref{definitionofa1two}) to be
surprising.  In particular, for any given surface $M$, we do not
know if there are conditions which will determine the value taken
by $A_{M}$.  If $\Gamma = \Gamma_{0}(N)$, a congruence subgroup,
then we show that $A_{M} =
(\mathfrak{g}_{2}/\mathfrak{g}_{1})^{2}$. If $\Gamma =
\overline{\Gamma_{0}(5)^{+}}$, an arithmetically defined
``moonshine group'' (see \cite{Cum04} and references therein), we
show that $A_{M} = e^{\ell_{M,0}}$. It is even more surprising
that in the case of the group $\Gamma =
\overline{\Gamma_{0}(6)^{+}}$, that is of the same signature as
$\Gamma = \overline{\Gamma_{0}(5)^{+}}$, we again have the
relation $A_{M} = (\mathfrak{g}_{2}/\mathfrak{g}_{1})^{2}$.

\vskip .10in
Consequently, we conclude that the study of the vertical counting
function $N_{\textrm{ver}}(T;(Z_{M}H_{M})')$ contains a term which
provides new information associated to $\Gamma$ which we do
not see as being previously detected.

\vskip .10in
The philosophy behind the Phillips-Sarnak conjecture suggests that the spectral
analysis of the Laplacian acting on smooth functions on $M$
should depend on the arithmetic nature of the underlying Fuchsian group $\Gamma$.
One realization of the spectral analysis of the Laplacian is the location of
zeros of $Z_{M}$, which includes as a special consideration the function
$N_{\textrm{ver}}(T;Z_{M})$, or, equivalently, the function $N_{\textrm{ver}}(T;Z_{M}H_M)$.
If we are allowed to view the vertical
counting function $N_{\textrm{ver}}(T;(Z_{M}H_{M})')$ as another measure of the spectral
analysis of the Laplacian on $M$, then our Main Theorem shows the existence of
refined information, namely $A_{M}$ with its conditional definition (\ref{definitionofA}),
about the uniformizing group $\Gamma$.  In addition, we found that the value of $A_{M}$
is different for two different arithmetically defined discrete groups with the same signature.
We view this conclusion quite surprising, yet in full support of the Phillips-Sarnak philosophy.

\vskip .10in
\subsection{Computations for the modular group}
After the completion of this article, W. Luo brought to our attention the unpublished article
\cite{Min08up} from 2008 in which the author undertakes a related study in the case when
$\Gamma = \textrm{\rm PSL}(2, \mathbb Z)$.  There are a number of important differences between
the results in the present paper and those in \cite{Min08up}, which we now discuss.

\vskip .10in
In \cite{Min08up}, as the title of the article states,
the author studies the zeros of the derivative of the zeta function
$Z_{M}(s)/\zeta_{\mathbb Q}(2s)$ where
$M = \textrm{\rm PSL}(2, \mathbb Z) \backslash \mathbb H$.  If we restrict our
analysis to the case when $\Gamma = \textrm{\rm PSL}(2, \mathbb Z)$, then the
function whose derivative we study is $Z_{M}(s) \zeta_{\mathbb Q}(2s-1)/\zeta_{\mathbb Q}(2s)$.
Since the article \cite{Min08up} studies a different function than in the present article,
one would expect that the statements of the main results are different, as, indeed, is the case.
More importantly, however, the asymptotic expansions obtained in \cite{Min08up} has an error
term of $O(T)$, whereas our error term is $o(T)$, which is significant since the coefficient of
the $T$ term contains the quantity $A_{M}$, which we view as a new spectral invariant.

\vskip .10in
Finally, we note that the article \cite{Min08up} studies the single group $\Gamma =\textrm{\rm PSL}(2, \mathbb Z)$.
The approach may extend to other settings when one has explicit knowledge of the scattering matrix; however,
we do not see how the approach of \cite{Min08up} would apply for general non-arithmetic surfaces.  By contrast,
our Main Theorem applies to an arbitrary, co-finite group $\Gamma$, and the issue of arithmeticity of $\Gamma$ plays
a role only when one is evaluating the invariant $A_{M}$.

\vskip .10in
\subsection{Outline of the paper}
This article is organized as follows.  In Section 2, we will
establish notation and recall necessary results from the
literature.  The zero-free region for $(Z_M H_M)'$, as stated in part
(a) of the Main Theorem,  will be proved in Section 3.  Various
lemmas leading up to the proof of parts (b) and (c) of the main
Theorem will be given in Section 4, the proof of parts (b) and
(c) will be completed in Section 5, and in Section 6 we will state
and prove several corollaries of the Main Theorem.  The examples of
congruence groups and ``moonshine'' groups will be given in
Section 7.  In Section 8, we prove results analogous to our Main
Theorem for higher derivatives of $Z_{M}H_{M}$.  Finally, in Section 9, we will give various concluding
remarks.  In particular, we will review Weyl's law for $M$ and
show that, under a natural restatement, the constant ${\mathfrak
g}_{1}$ appears.

\vskip .20in
\section{Background material}

\subsection{Basic notation}
Let $\Gamma \subseteq \textrm{PSL}(2,\mathbb{R})$ be a Fuchsian
group of the first kind acting on the upper half plane $\mathbb
H$, which we parameterize by $z = x + i y \in {\mathbb C}$ with $y
> 0$.  Let $M = \Gamma \backslash \mathbb{H}$ be the quotient
Riemann surface, which of course may have orbifold singularities
if $\Gamma$ has elliptic elements.  The upper half plane $\mathbb H$ is
equipped with the canonical metric with constant negative
curvature equal to $-1$, which induces a metric on $M$ whose
volume $\textrm{vol}(M)$ is finite.  We will assume standard
notions in hyperbolic geometry, referring to \cite{Bea83} for
further details.

\subsection{Counting functions.}

Let $F$ denote either a general Dirichlet series with a critical line; $F$ itself may be
the derivative of another general Dirichlet series.  We assume that $F$ is
normalized to be convergent in the half plane $\text{\rm Re}(s) >
1$ with critical line $\text{\rm Re}(s) = 1/2$.  We define the
vertical counting function of $F$ as
$$
N_{\textrm{ver}}(T;F) = \sum\limits_{F(\sigma + it) = 0 \atop 0 <
t < T , 0 < \sigma < 1} 1
$$
and the horizontal counting function of $F$ as
$$
N_{\textrm{hor}}(T;F) = \sum\limits_{F(\sigma + it) = 0 \atop 0 <
t < T , 1/2 < \sigma < 1} (\sigma - 1/2).
$$
Classical results study the vertical and horizontal counting
functions when $F$ is a zeta function from an algebraic number
field, with more recent attention turned to the setting when $F$
is the derivative of such a zeta function, as discussed in the
introduction.

\vskip .10in
\subsection{The Selberg zeta function}
Let $\mathcal{H}(\Gamma)$ denote a complete set of representatives of
inconjugate, primitive hyperbolic elements of
$\Gamma$.  For each $P \in \mathcal{H} (\Gamma)$, there exists an element $P_0 \in \mathcal{H} (\Gamma)$ such that $P=P_0^{n}$ for some positive integer $n$. The element $P_0$ is called a primitive element of $\mathcal{H}(\Gamma)$. If $\ell_{P}$
denotes the length of the geodesic path in the homotopy class
determined by $P$, then the norm of the element $P$, denoted by $N(P)$ is equal to $\exp (\ell_{P})$.  For $s \in \mathbf{C}$ with
$\textrm{Re}(s) > 1$, the Selberg zeta function $Z_{M}(s)$ is formally
defined by the Euler product
\begin{equation} \label{EulProd}
Z_{M}(s) = \prod\limits_{n=0}^{\infty}\prod\limits_{P_0 \in
\mathcal{H}(\Gamma)} \left( 1- e^{-(s+n)\ell_{P_0}}\right)= \prod\limits_{n=0}^{\infty}\prod\limits_{P_0 \in
\mathcal{H}(\Gamma)} \left( 1- N(P_0)^{-(s+n)}\right).
\end{equation}

\noindent
The product \eqref{EulProd} is defined for $\textrm{Re}(s) > 1$ and
admits a meromorphic continuation to the entire complex plane with the
functional equation $Z_M(s) \phi_M (s)= \eta_M(s) Z_{M}(1-s)$ where
\begin{equation*}
\eta_M (s)=\eta_M (1/2)\exp \left( \underset{1/2}{\overset{s}{\int
}}\frac{\eta_M ^{\prime }}{\eta_M }(u)du\right) ,
\end{equation*}
and
\begin{eqnarray}
\frac{\eta_M ^{\prime }}{\eta_M }(s) &=&\mathrm{vol} (M)
(s-1/2)\tan (\pi (s-1/2))- \pi \underset{0<\theta (R)<\pi
}{\underset{\left\{
R\right\} }{\sum }}\frac{1}{M_{R}\sin \theta }\frac{\cos (2\theta -\pi)(s-1/2)}{\cos \pi(s-1/2)} \notag \\
&&+2n_{1}\log 2+n_{1}\left( \frac{\Gamma ^{\prime }}{\Gamma }(1/2+s)+\frac{%
\Gamma ^{\prime }}{\Gamma }(3/2-s)\right) \notag \\
&&=\frac{\eta_M ^{\prime }}{\eta_M }%
(1-s). \label{DefEtaLogDer}
\end{eqnarray}
The set $\{R\}$ denotes the finite set of inconjugate elliptic elements of $\Gamma$ and $0< \theta(R)< \pi$ is uniquely determined real number such that elliptic element $R$ is conjugate to the matrix
$$
\left(\begin{array}{cc}\cos \theta(R) & -\sin \theta(R) \\
\sin \theta(R) & \cos \theta(R) \\\end{array}\right).
$$
We refer the interested reader to \cite{Hej83} for a proof of meromorphic
continuation and functional equation of $Z_{M}(s)$; specifically, see pp. 499-500 in the
case where, in the notation of \cite{Hej83}, $m=0,$ $r=1$ and $W= \mathrm{Id}$.

\vskip .10in
The divisor of the Selberg zeta function is stated as Theorem 5.3,
page 498 of \cite{Hej83}. In brief, there are trivial zeros and
poles at the negative integers and half-integers, as well as a
finite number of poles at $s=1/2$ and $s=s_{n} \in (0,1/2)$, where
$s_{n}(1-s_{n})\in (0,1)$ is a small eigenvalue of the Laplacian.
Additionally, there are zeros at points of
the form $1/2 + ir_{n}$, where $1/4 + r_{n}^{2}$ is an eigenvalue
of the Laplacian and at points in the half-plane $\text{\rm Re}(s)
< 1/2$ coming from the non-trivial factor $\phi_{M}$ in the
functional equation.  The function $\phi_{M}$ is the determinant
of the scattering matrix and is described in section 1.3 above.
The reader is referred to page 498 of \cite{Hej83} for additional
background material on this matter.

\subsection{Additional identities}

For any element $P \in \mathcal{H}(\Gamma)$, let $P_{0} \in \mathcal{H}(\Gamma)$ be the unique
primitive hyperbolic element such that $P=P_{0}^{n}$, for some positive integer $n$. The logarithmic derivative
\begin{equation} \label{logderDef}
D_{M}(s):=\frac{Z_{M}'}{Z_{M}}(s)
\end{equation}
of the Selberg zeta function may be expressed, for $\Re (s)>1$ as the absolutely convergent series
\begin{equation} \label{logderZ}
D_M (s) = \sum _{P \in \mathcal{H}(\Gamma)} \frac{\Lambda (P)}{N(P)^{s}},
\end{equation}
where
$$
\Lambda (P):= \frac{\log N(P_{0})}{1-N(P)^{-1}};
$$
one views $\Lambda(P)$ as the analogue of the classical von Mangoldt $\Lambda$ function.

\vskip .10in
Dirichlet series representation of the logarithmic derivative of the function $Z_M H_M$ is given by the following lemma.

\vskip .10in
\begin{lemma} \label{SeriesRepLogDerZ_M H_M} There exists a constant $\sigma_0' \geq 1$ such that
for all $s \in \mathbb{C}$ with $\Re (s)\geq \sigma_0' + \epsilon >\sigma_0'$, we have that
\begin{equation} \label{logderZ_M H_M}
\frac{(Z_M H_M)'}{(Z_M H_M)}(s)= \underset{P
\in \mathcal{H}(\Gamma) }{\sum }\frac{\Lambda (P)}{N(P)^{s}}+\sum _{i=1}^{\infty}\frac{%
b\left( q_{i}\right) }{q_{i}^{s}}.
\end{equation}
In addition, the series converge absolutely and uniformly on every compact subset of the half plane
$\Re (s)> \sigma_0'$.
\end{lemma}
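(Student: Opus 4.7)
The plan is to reduce the identity to two separately known (or easily derived) Dirichlet series representations, then combine them on the common half-plane of absolute convergence. Since $Z_M$ and $H_M$ are holomorphic and non-vanishing for $\Re(s)$ sufficiently large (indeed, for $\Re(s)>1$ in the case of $Z_M$, by the Euler product \eqref{EulProd}, and for $\Re(s)$ large enough in the case of $H_M$, because $H_M(s)\to 1$ as $\Re(s)\to\infty$ from \eqref{Dirichletseriespart}), we have on such a right half-plane the elementary identity
$$
\frac{(Z_M H_M)'}{Z_M H_M}(s)=\frac{Z_M'}{Z_M}(s)+\frac{H_M'}{H_M}(s)=D_M(s)+\frac{H_M'}{H_M}(s).
$$
The first term is treated by invoking \eqref{logderZ}, which gives the hyperbolic sum $\sum_{P\in\mathcal{H}(\Gamma)}\Lambda(P)N(P)^{-s}$ and converges absolutely for $\Re(s)>1$; this is standard and is quoted in the text just before the Lemma.

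The heart of the proof is then to justify the Dirichlet series expansion \eqref{Hlogder} of $H_M'/H_M$ and to identify its half-plane of absolute convergence. I would proceed as follows. Write $H_M(s)=1+F(s)$, where $F(s)=\sum_{n\ge 2}a(n)r_n^{-2s}$ with $r_n>1$; since $r_n\to\infty$ and the Dirichlet series \eqref{Dirichletseriespart} converges absolutely for $\Re(s)>1$, there exists $\sigma_0\geq 1$ such that $|F(s)|<1/2$ for $\Re(s)>\sigma_0$. On that region one may expand
$$
\log H_M(s)=\sum_{k=1}^{\infty}\frac{(-1)^{k-1}}{k}F(s)^{k},
$$
with absolute convergence. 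Each $F(s)^{k}$ is a Dirichlet series whose frequencies are the $k$-fold products of the $r_n^{2}$; collecting these frequencies and reordering them as the non-decreasing sequence $\{q_i\}$ from the statement produces a Dirichlet series representation $\log H_M(s)=\sum_{i}c(q_i)q_i^{-s}$ which converges absolutely on $\Re(s)>\sigma_0$. Term-by-term differentiation (justified by the uniform convergence on compact subsets of that half-plane, which follows from Weierstrass' theorem applied to the absolutely convergent series) yields
$$
\frac{H_M'}{H_M}(s)=\sum_{i=1}^{\infty}\frac{b(q_i)}{q_i^{s}},\qquad b(q_i)=-c(q_i)\log q_i,
$$
which is exactly \eqref{Hlogder}.

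Taking $\sigma_0'=\max(1,\sigma_0)$, both series in \eqref{logderZ_M H_M} converge absolutely and uniformly on compact subsets of $\{\Re(s)>\sigma_0'\}$, and the identity asserted in the Lemma is simply the sum of the two pieces. The only non-routine step is the grouping and reordering that passes from the $k$-fold convolutions $F(s)^{k}$ to a single Dirichlet series indexed by the sequence $\{q_i\}$; this is the main (mild) obstacle, but it is handled by a standard absolute-convergence argument exploiting the bound $|F(s)|<1/2$, which legitimizes the unconditional rearrangement and the identification of $q_1=(\mathfrak{g}_2/\mathfrak{g}_1)^{2}$ together with the explicit value $b(q_1)=-a(2)\log q_1$ stated in the text preceding the Lemma.
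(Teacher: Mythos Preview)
Your proof is correct and follows essentially the same approach as the paper: split the logarithmic derivative of the product into the sum of the two logarithmic derivatives, then invoke the known Dirichlet series for $Z_M'/Z_M$ and for $H_M'/H_M$. The only difference is one of detail: the paper treats the expansion \eqref{Hlogder} as a fact already stated in Section~1.4 and simply cites it, whereas you supply an actual derivation via $\log(1+F)$ and term-by-term differentiation. Your extra work is sound and in fact fills in a justification the paper leaves implicit; otherwise the two arguments are identical, and you may take $\sigma_0'=\sigma_0$ as the paper does.
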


\vskip .10in
\begin{proof}
The proof follows from the elementary formula
$$(Z_M H_M)'(s)= (Z_M(s)H_M(s))\left( \frac{Z_M'}{Z_M}(s) + \frac{H_M'}{H_M}(s)\right)$$
together with equations \eqref{Hlogder} and \eqref{logderZ}. We may take $\sigma_0'$ to be equal to $\sigma_0$,
which was defined in section 1.4.
\end{proof}

\begin{lemma} \label{lemmaFuctEq}
The derivative of the function $Z_M H_M$ satisfies the functional
equation
\begin{equation} \label{DerivFunctEq}
 \left( Z_M H_M\right) ^{\prime }(s) = f_M
(s)\eta_M (s) K_M ^{-1}(s)\widetilde{Z}_M(1-s) Z_M(1-s),
\end{equation}
where
\begin{equation} \label{definitOf f_M}
f_M (s):=\mathrm{vol}(M) (1/2 -s)\left( \tan \pi (1/2 -s)\right)
\end{equation}
and
\begin{equation} \label{ZtildaDef}
\widetilde{Z}_M(s):= \frac{1}{f_M (s)}\left( \frac{\eta_M ^{\prime }}{\eta_M }(s)-\frac{%
K_M^{\prime }}{K_M}(1-s)-\frac{Z_M^{\prime }}{Z_M}(s)\right).
\end{equation}
\end{lemma}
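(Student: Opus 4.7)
The plan is to start from the functional equation of $Z_M$ combined with the factorization $\phi_M = K_M H_M$, then differentiate and reorganize. From $Z_M(s)\phi_M(s)=\eta_M(s)Z_M(1-s)$ and $\phi_M=K_MH_M$, one obtains the identity
\begin{equation*}
(Z_MH_M)(s) = \eta_M(s)K_M^{-1}(s)Z_M(1-s),
\end{equation*}
valid on the domain of meromorphy. I would differentiate both sides in $s$ using the product rule, remembering the chain-rule sign in $\frac{d}{ds}Z_M(1-s)=-Z_M'(1-s)$, and then factor out $\eta_M(s)K_M^{-1}(s)Z_M(1-s)$ to produce
\begin{equation*}
(Z_MH_M)'(s)=\eta_M(s)K_M^{-1}(s)Z_M(1-s)\left[\frac{\eta_M'}{\eta_M}(s)-\frac{K_M'}{K_M}(s)-\frac{Z_M'}{Z_M}(1-s)\right].
\end{equation*}

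Next I would exploit two symmetry facts. First, the identity $\frac{\eta_M'}{\eta_M}(s)=\frac{\eta_M'}{\eta_M}(1-s)$, which is precisely the content of \eqref{DefEtaLogDer}. Second, the elementary observation that the function $f_M(s)=\mathrm{vol}(M)(1/2-s)\tan\pi(1/2-s)$ is invariant under $s\mapsto 1-s$, since $(1/2-s)\tan\pi(1/2-s)=(s-1/2)\tan\pi(s-1/2)$; hence $f_M(s)=f_M(1-s)$. Replacing $\frac{\eta_M'}{\eta_M}(s)$ by $\frac{\eta_M'}{\eta_M}(1-s)$ in the bracket above makes the three terms inside match exactly the definition \eqref{ZtildaDef} of $\widetilde{Z}_M(1-s)$ multiplied by $f_M(1-s)$, since
\begin{equation*}
\widetilde{Z}_M(1-s)=\frac{1}{f_M(1-s)}\left[\frac{\eta_M'}{\eta_M}(1-s)-\frac{K_M'}{K_M}(s)-\frac{Z_M'}{Z_M}(1-s)\right].
\end{equation*}

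Using $f_M(1-s)=f_M(s)$ to rewrite the prefactor then yields
\begin{equation*}
(Z_MH_M)'(s)=f_M(s)\,\eta_M(s)\,K_M^{-1}(s)\,\widetilde{Z}_M(1-s)\,Z_M(1-s),
\end{equation*}
which is the claimed identity \eqref{DerivFunctEq}. There is no essential difficulty in this argument: it is a direct differentiation combined with one symmetry of $\eta_M'/\eta_M$ and the reflection symmetry of $f_M$. The only point demanding care is bookkeeping of arguments, namely distinguishing $\frac{K_M'}{K_M}(s)$ from $\frac{K_M'}{K_M}(1-s)$ when reading \eqref{ZtildaDef} at $1-s$ rather than at $s$, and tracking the sign introduced by the chain rule when differentiating $Z_M(1-s)$.
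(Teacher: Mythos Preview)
Your proposal is correct and follows exactly the approach indicated in the paper: the paper simply states that writing the functional equation as $Z_MH_M(s)=\eta_M(s)K_M^{-1}(s)Z_M(1-s)$ and carrying out ``straightforward computations in calculus'' yields the result, and your argument spells out precisely those computations, including the two symmetries $\eta_M'/\eta_M(s)=\eta_M'/\eta_M(1-s)$ and $f_M(s)=f_M(1-s)$ needed to match the bracket with $f_M(1-s)\widetilde{Z}_M(1-s)$.
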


\vskip .10in
\begin{proof}
If one writes the functional equation of the Selberg zeta function as
\begin{equation} \label{functeqMain}
Z_M H_M (s)=\eta_M (s)K_M^{-1}(s)Z_M(1-s),
\end{equation}
the claimed result then follows from straightforward computations in calculus.
\end{proof}

\vskip .10in
Let
$$
\psi_{M}(x):=\sum_{N(P) \leq x} \Lambda(P)
$$
denote the prime geodesic counting function.  The prime geodesic theorem, which we
quote from \cite{Hej83}, states the asymptotic growth of $\psi_{M}(x)$.  Specifically, Theorem 3.4 on page 474,
with the notation that $W=\mathrm{Id}$), states the asympototic formula
$$
\psi_{M}(x) = \sum _{k=0}^{K} \frac{x^{s_{k}}}{s_{k}}+O(x^{3/4} \sqrt{\log x})
\,\,\,\,\, \text{  as  } x \rightarrow + \infty.
$$
In the prime geodesic theorem, we have the notation that $0=\lambda_{0} <\lambda_{1} \leq \lambda_{2} \leq ...
\lambda_{K} <1/4$ is the sequence of
discrete eigenvalues of the Laplacian $\Delta_{M}$ less than $1/4$, and $s
_{n}=1/2 + \sqrt{\frac{1}{4}-\lambda _{n}}$. Since $\lambda_0=0$, the leading term in the expansion of $\psi_{M}(x)$ is $x$.

\vskip .20in
\subsection{An integral representation for $D_M(s)$}

\vskip .10in
In this section we recall results from \cite{AS09} on the growth of the logarithmic derivative $D_M(s)$ and its derivatives $D_M^{(k)}(s)$ for $s=1/2 + \sigma + iT$, as $T \rightarrow \pm \infty$, for $\sigma \in
(0, 1/2)$.

\vskip .10in
The upper bounds on the growth of $D_{M}^{(k)}$, for
$k=0,1,2,...$ are deduced inductively from a new integral
representation of $D_M(s)$.  Since the bounds
for the logarithmic derivative represent an important
ingredient in the proof of the Main Theorem, we will recall the
new integral representation of \eqref{logderDef} obtained in
\cite{AS09}; see Theorem 5.1b).  We then briefly explain how to deduce the
appropriate bounds for $D_{M}^{(k)}$ for all $k \geq 0$.

\vskip .10in
For $n=0,...,K$, so then $\lambda_{n} <1/4$,
we set $\ r_{n}=-i\sqrt{\frac{1}{4}-\lambda _{n}}$. Set
\begin{equation*}
r\left( t\right) =\tanh \pi t-1,
\,\,\,\,\,\textrm{and}\,\,\,\,\,
H(t)=\frac{\Gamma
^{\prime }}{\Gamma }\left( 1+it\right) +\frac{\Gamma ^{\prime }}{\Gamma }%
\left( 1-it\right) -2\log t.
\end{equation*}
Let $N_{M}\left[ 0\leq r_{n}\leq t\right] $ be the counting function
 for the number of non-negative numbers $r_{n} \leq t$ such that
$\frac{1}{4}+r_{n}^{2}=\lambda _{n}$ is an eigenfunction of the Laplacian.  Let
\begin{equation*}
R_{M}\left( t\right) =N_{M}\left[ 0\leq r_{n}\leq t\right] -\frac{1}{4\pi }\overset%
{t}{\underset{-t}{\int }}\frac{\phi_{M} ^{\prime }}{\phi_{M} }\left( \frac{1}{2}%
+iu\right) du-\frac{\textrm{vol} (M)}{4\pi }t^{2}+\frac{t
n_{1}}{\pi } (\log 2t - 1),
\end{equation*}
which is the error term in the Weyl's law.  From \cite{Hej83}, namely Theorems 2.28 and 2.29 on pages 466-468,
we have that
\begin{equation} \label{errorBound}
R_{M}\left( t\right)  =O \left (\frac{t}{\log t} \right)    \text{  and  }  \int \limits _{0} ^{t} R_M(u) du = O \left (\frac{t}{\log ^{2} t} \right) .
\end{equation}

With all this, we quote the following result is from \cite{AS09}.

\vskip .10in
\begin{theorem} \label{IntRep} Let $s \in \mathbb C$ with $\mathrm{Re}(s)>1/2$ and write $s = 1/2 + \alpha$.
Let $y>0$ be arbitrary and set $x=e^{y}$. Then, we have the identity
\begin{align}
D_{M}\left( s\right) &=\frac{1}{1+x^{2\alpha }}
\sum_{P\in \mathcal{H}(\Gamma) \text{, }N(P)<x } \frac{\Lambda (P)}{N\left(
P\right) ^{\alpha +\frac{1}{2}}}\left( x^{2\alpha }-N\left( P\right)
^{2\alpha }\right)  \label{LogDerDva} \\
&+\frac{4\alpha x^{\alpha }}{1+x^{2\alpha }}\left( \overset{K}{\underset{n=0}{%
\sum }}\frac{\cos yr_{n}}{\alpha ^{2}+r_{n}^{2}}+\frac{R_{M}(0)}{\alpha ^{2}}+%
\underset{0}{\overset{\infty }{\int }}\frac{\cos y t d R_{M}\left( t\right) }{%
\alpha ^{2}+t^{2}}-\frac{\textrm{\rm vol}(M)}{2\pi }\underset{0}{%
\overset{\infty }{\int }}\frac{t\cdot r\left( t\right) \cos yt}{\alpha
^{2}+t^{2}}dt  \right. \notag \\
&+\frac{n_{1}}{2\pi }\underset{0}{\overset{\infty }{\int }}\frac{\cos
y t H\left( t\right) dt}{\alpha ^{2}+t^{2}}\left. -\underset{0<\theta \left( R\right) <\pi }{\sum_{\left\{ R\right\}
_{\Gamma }}}\frac{1}{2M_{R}\sin \theta }%
\overset{\infty }{\underset{0}{\int }}\frac{\cos y t}{\alpha ^{2}+t^{2}}\frac{%
\cosh 2\left( \pi -\theta \right) t}{\cosh 2\pi
t}dt  - \frac{1}{4} \textrm{\rm Tr} \left(I- \Phi_{M} (\frac{1}{2}) \right)\right) \notag
\end{align}
All integrals in \eqref{LogDerDva} converge uniformly
in $s$ on every compact subset of the half-plane $\Re (s) >1/2$.
\end{theorem}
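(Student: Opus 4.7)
The plan is to derive the identity as a consequence of the Selberg trace formula applied to a carefully chosen test function of Perron type. Writing $\alpha = s - 1/2$ and $y = \log x$, I would take
$$h(r) = \frac{4\alpha x^{\alpha}}{1+x^{2\alpha}}\cdot\frac{\cos(yr)}{\alpha^{2}+r^{2}},$$
so that the values of $h$ on the discrete spectrum reproduce, term by term, the sum $\sum_{n=0}^{K}\cos(yr_n)/(\alpha^2+r_n^2)$ that appears explicitly in \eqref{LogDerDva}, with the remaining spectral piece (from real $r_n$) to be repackaged in terms of the Weyl counting function and its smoothed companion $R_M(t)$. Because $h$ is even, real, and decays like $1/r^{2}$, it lies in the class of admissible test functions for the trace formula of \cite{Hej83}, and its Selberg/Fourier transform $g(u)$ can be computed by contour integration: the residue at $r=\pm i\alpha$ produces a kernel of the form $c\alpha^{-1}(e^{-\alpha|u+y|}+e^{-\alpha|u-y|})$ that is supported, up to exponential tails, essentially on $|u|\le y$, which is precisely the mechanism that will truncate the geodesic sum at $N(P)<x$.

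The main computation is then to identify each term of the trace formula with the corresponding term in \eqref{LogDerDva}. The hyperbolic contribution, after expanding $\cos(yr)=(e^{iyr}+e^{-iyr})/2$ and evaluating $g$ at $u=\ell_{P}$, collapses the infinite geodesic sum into the finite sum over $N(P)<x$ with the claimed weight $\Lambda(P)N(P)^{-\alpha-1/2}(x^{2\alpha}-N(P)^{2\alpha})/(1+x^{2\alpha})$; contributions from $N(P)>x$ cancel. The identity term produces the $\mathrm{vol}(M)$ integral against the Plancherel density $t\tanh(\pi t)$, the negative leading $-t$ piece of which is absorbed into $r(t)=\tanh\pi t - 1$ to secure absolute convergence. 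Each inconjugate elliptic class $\{R\}$ with rotation angle $\theta$ contributes the $\cosh 2(\pi-\theta)t/\cosh 2\pi t$ integral, and the parabolic/continuous-spectrum side of the trace formula, using the explicit description of $\eta_M'/\eta_M$ in \eqref{DefEtaLogDer} together with the definition of $H(t)$, produces both the $H(t)$ integral and, at $s=1/2$, the boundary term $-\tfrac{1}{4}\mathrm{Tr}(I-\Phi_M(1/2))$. The remaining continuous spectral sum is finally rewritten as $\int_0^\infty \cos(yt)\,dR_M(t)/(\alpha^2+t^2)+R_M(0)/\alpha^{2}$ by Abel summation against $R_M$.

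The principal obstacle will be justifying convergence of these integrals uniformly on compact subsets of $\mathrm{Re}(s)>1/2$. The naive spectral sum $\sum_n h(r_n)$ is only conditionally convergent, since $h(r)=O(r^{-2})$ while the Weyl density of $\{r_n\}$ grows linearly; the remedy is to integrate by parts once against $R_M$, which by the bounds in \eqref{errorBound} turns the integrand into something decaying like $1/(t\log^2 t)$ and makes the resulting integral absolutely and uniformly convergent. A second bookkeeping point is the matching of the constants $2n_1\log 2$ and the $\Gamma'/\Gamma$ pieces appearing both in $\eta_M'/\eta_M$ and in the definition of $H(t)$; this requires the duplication formula for the Gamma function but no hidden identities. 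Every remaining convergence question reduces to admissibility of $h$ for the standard trace formula, which is immediate from the decay of $h$ and its holomorphic extension to a strip $|\mathrm{Im}\, r|<\tfrac{1}{2}+\varepsilon$.
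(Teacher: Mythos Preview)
The paper does not prove this theorem; it explicitly quotes it from \cite{AS09} (see the sentence ``we quote the following result from \cite{AS09}'' preceding the statement). So there is no in-paper proof to compare against, and your trace-formula approach with the test function $h(r)=\dfrac{4\alpha x^{\alpha}}{1+x^{2\alpha}}\cdot\dfrac{\cos(yr)}{\alpha^{2}+r^{2}}$ is the natural one and, in broad outline, almost certainly the method of \cite{AS09}. Your identification of the hyperbolic, identity, elliptic and parabolic pieces with the corresponding terms of \eqref{LogDerDva} is correct in spirit.

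There is, however, a real gap in your admissibility claim. You assert that applicability of the trace formula ``is immediate from the decay of $h$ and its holomorphic extension to a strip $|\mathrm{Im}\, r|<\tfrac{1}{2}+\varepsilon$,'' but neither condition holds in the full range $\mathrm{Re}(s)>1/2$. First, $h$ has poles at $r=\pm i\alpha$, and since $\alpha=s-1/2$, these satisfy $|\mathrm{Im}(\pm i\alpha)|=\mathrm{Re}(\alpha)$; thus the poles lie \emph{inside} the strip $|\mathrm{Im}\,r|\le 1/2+\varepsilon$ whenever $1/2<\mathrm{Re}(s)\le 1+\varepsilon$. Second, $h(r)=O(r^{-2})$ is only the borderline decay, not the $O((1+|r|)^{-2-\delta})$ required in the standard non-compact trace formula of \cite{Hej83}. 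Relatedly, your statement that ``contributions from $N(P)>x$ cancel'' is not accurate: your own computation of $g(u)$ gives $g(\log N(P))=N(P)^{-\alpha}$ for $N(P)>x$, so those terms contribute $\sum_{N(P)>x}\Lambda(P)N(P)^{-s}$, which is the \emph{tail} of $D_{M}(s)$ and only converges for $\mathrm{Re}(s)>1$. The correct mechanism is that this tail combines with the $N(P)<x$ piece to produce $D_{M}(s)$ minus the displayed finite sum, and this rewriting is only valid for $\mathrm{Re}(s)>1$.

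The fix is standard but must be stated: establish the identity first for $\mathrm{Re}(s)$ large (say $\mathrm{Re}(s)>1$, where the poles of $h$ are safely outside the strip and the geodesic tail converges), possibly after smoothing $h$ to get genuine $O(r^{-2-\delta})$ decay and then removing the smoothing; finally, observe that both sides of \eqref{LogDerDva} are holomorphic in $s$ on $\mathrm{Re}(s)>1/2$ (the right-hand side by the uniform-convergence assertion you are also proving), and invoke analytic continuation. Without this two-step argument the derivation is formal in the critical range $1/2<\mathrm{Re}(s)\le 1$, which is exactly the range used later in the paper.
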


\vskip .10in
The representation \eqref{LogDerDva} itself is of a
number-theoretic interest, since it allows us to pass to the limit on the
right hand side as $y\rightarrow \infty $, while the left hand side does not
depend upon $y$, and obtain some interesting special values of the function
$D_{M}(s)$; see \cite{AS06} and \cite{AS08}.  The identity \eqref{LogDerDva}
is used in \cite{AS09} to deduce the following theorem.

\vskip .10in
\begin{theorem}
For $s=1/2 + \sigma + iT$, $0< \sigma < 1/2$ and every non-negative integer $k$,
we have the asymptotic bound
\begin{equation}
D_M^{(k)}\left( s\right) =O\left( \min \left\{ \frac{\left| T\right| }{\sigma
^{k+1}\log \left| T\right| },\left| T\right| ^{1-2\sigma }\log ^{k-2\sigma
}\left| T\right| \cdot \underset{j=0,...,k}{\max }\left\{ \frac{1}{\sigma
^{j+1}\log ^{j+1}\left| T\right| },\log \left| \frac{T}{\sigma }\right|
\right\} \right\} \right) ,  \label{BoundDer}
\end{equation}
as $\left| T\right| \rightarrow \infty $.
\end{theorem}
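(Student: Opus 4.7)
The plan is to derive (\ref{BoundDer}) by differentiating the integral representation in Theorem \ref{IntRep} and optimizing the free parameter $y=\log x$ separately for each of the two bounds inside the $\min$.

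First I would differentiate the identity \eqref{LogDerDva} $k$ times with respect to $s$, equivalently with respect to $\alpha=s-1/2$. Each differentiation either brings down a factor $2y$ from $x^{2\alpha}$ (respectively $y$ from $x^{\alpha}$), or lowers one of the denominators $\alpha^{2}+t^{2}$, $\alpha^{2}+r_{n}^{2}$ by one power. After a Leibniz expansion one gets a finite sum of pieces, indexed by how the $k$ derivatives are distributed between the prefactor and the $\alpha$-dependent kernel; note that the constant contribution $-\frac{1}{4}\mathrm{Tr}(I-\Phi_{M}(1/2))$ vanishes once $k\geq 1$.

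Next I would estimate each piece for $s=1/2+\sigma+iT$, so $\alpha=\sigma+iT$. The moduli of the prefactors satisfy
$$
\left|\frac{1}{1+x^{2\alpha}}\right|\asymp\frac{1}{\max(1,e^{2y\sigma})},\qquad \left|\frac{x^{\alpha}}{1+x^{2\alpha}}\right|\asymp\frac{e^{y\sigma}}{\max(1,e^{2y\sigma})}.
$$
The truncated sum over $P\in\mathcal{H}(\Gamma)$ with $N(P)<x$ is controlled via the prime geodesic theorem by $O(x^{1-\sigma})$ up to logarithmic factors, with each derivative inserting an extra factor of at most $y$. The discrete spectral sum over $r_{n}$ is convergent and contributes only $O(\sigma^{-(k+1)})$. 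For each of the integrals against $dR_{M}(t)$, $r(t)$, $H(t)$ and the elliptic kernel I would integrate by parts once and use the bounds \eqref{errorBound}; derivatives in $\alpha$ of $1/(\alpha^{2}+t^{2})$ produce factors $1/(\alpha^{2}+t^{2})^{j+1}$ which, after splitting the integral at $t=|T|$, yield contributions of order $|T|/(\sigma^{k+1}\log|T|)$ thanks to the $O(t/\log t)$ bound on $R_{M}$.

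Finally I would choose $y$ optimally in each of the two regimes. For the first bound I take $y$ bounded, so that $|x^{2\alpha}|\asymp 1$ and the dominant contribution comes from the spectral integrals, yielding $|T|/(\sigma^{k+1}\log|T|)$. For the second bound I take $y\asymp\log|T|$, so that the prefactor $1/(1+x^{2\alpha})$ produces a factor $|T|^{-2\sigma}$, the truncated geodesic sum contributes $|T|^{1-\sigma}$ up to logs, and the $y^{k}$ arising from differentiating $x^{2\alpha}$ produces the explicit power $\log^{k-2\sigma}|T|$. The main obstacle will be the bookkeeping for the $\max$ over $j\in\{0,\dots,k\}$ in (\ref{BoundDer}): the different Leibniz terms distributing $j$ derivatives on the prefactor and $k-j$ on the spectral part are balanced by different powers of $y$ and of $\sigma$, and to close the estimate uniformly in $k$ I would proceed by induction, exploiting the sharper auxiliary bound $\int_{0}^{t}R_{M}(u)\,du=O(t/\log^{2}t)$ from \eqref{errorBound} to handle the higher-order derivative terms.
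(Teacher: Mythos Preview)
Your plan is essentially the paper's own argument, with two small tactical differences worth noting. For the first bound the paper sends $y\to 0$ rather than merely keeping it bounded; this kills the truncated geodesic sum outright (since then $x=e^{y}\to 1$ and there are no $P$ with $N(P)<1$), so the whole estimate reduces to showing $\int_{0}^{\infty}\frac{dR_{M}(t)}{(s-1/2)^{2}+t^{2}}=O\bigl(\tfrac{1}{\sigma\log|T|}\bigr)$ via two integrations by parts and the second bound in \eqref{errorBound}. For the second bound the paper takes specifically $x\sim T^{2}\log^{2}|T|$, which makes the geodesic contribution $O\bigl((T\log|T|)^{1-2\sigma}\bigr)$; your stated exponents $x^{1-\sigma}$ and $|T|^{1-\sigma}$ are off and should be rechecked via partial summation. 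Finally, for $k\ge 1$ the paper does not Leibniz-expand the quotient directly: it first multiplies \eqref{LogDerDva} through by $(1+x^{2\alpha})$, then differentiates, and feeds in the case $k-1$. This avoids differentiating the awkward factor $1/(1+x^{2\alpha})$ and makes the bookkeeping behind the $\max_{j}$ in \eqref{BoundDer} considerably lighter than the direct Leibniz route you propose.
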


\vskip .10in
\begin{proof}
For the sake of completeness, we will recall main steps of the proof from \cite{AS09}.
\vskip .05in
The first bound for $k=0$ is obtained by letting $y \rightarrow 0$ in the
representation \eqref{LogDerDva} and showing that we may pass to the
limit as $y \rightarrow 0$ inside the integrals in \eqref{LogDerDva}. The main obstacle is to prove that
$$
\underset{0}{\overset{\infty }{\int }}\frac{%
dR_{M}\left( t\right) }{\left( s-1/2\right) ^{2}+t^{2}} = O \left(
\frac{1}{\sigma \log \vert T \vert}\right)
\,\,\,\,\,\textrm{\rm as $\vert T\vert \rightarrow \infty$},
$$
which is done by integrating by parts two times and then
using the second  bound in \eqref{errorBound}. The bound for $k\geq 1$ is derived by induction.

\vskip .10in
The second bound in the case when $k=0$ is derived by choosing $x
\sim T^{2} \log^{2} \vert T \vert$ and estimating each term in \eqref{LogDerDva} separately. There are two main issues.

\vskip .10in
We use the prime geodesic theorem and integration by parts, for
$\alpha =\sigma +iT$, $\frac{1}{2}>\delta \geq \sigma >0$ in order to arrive at the bound
\begin{align}
\frac{1}{1+x^{2\alpha }}&\underset{N\left( P\right) <x}{\sum }\frac{\Lambda \left( P\right) }{N\left( P\right) ^{\alpha +\frac{1}{2}}}\left( x^{2\alpha }-N\left( P\right) ^{2\alpha }\right) \notag
\\ \ll & \frac{1}{1+\left( T\log \left| T\right| \right)
^{4\alpha }}\underset{\eta }{\overset{T^{2}\log ^{2}\left|
T\right| }{\int }}\frac{\left( \left( T\log \left| T\right| \right)
^{4\alpha }-t^{2\alpha }\right) }{t^{\alpha +\frac{1}{2}}}d\psi_{M}(t) \notag \\
= & O(\left| T\log\left| T\right| \right| ^{1-2\sigma })=O\left( \left| T\right| ^{1-2\sigma }\log ^{-2\sigma }\left| T\right| \cdot
\log \left| \frac{T}{\sigma }\right| \right), \text{  as  } \left| T\right| \rightarrow \infty\notag
\end{align}
for the first sum in \eqref{LogDerDva}. The lower limit $ \eta \in(1,  N(P_{00}))$ is arbitrary.

\vskip .10in
Secondly, from the bound
\begin{equation*}
\underset{0}{\overset{\infty }{\int }}\frac{\cos ytdR_{M}\left( t\right) }{%
\alpha ^{2}+t^{2}}=O\left( \max \left\{ \frac{1}{\sigma \log \left| T\right|
},\log \left| T/\sigma \right| \right\} \right) \text{, as }\left| T\right|
\rightarrow \infty
\end{equation*}
we deduce that
\begin{equation*}
\frac{4\alpha x^{\alpha }}{1+x^{2\alpha }}\underset{0}{\overset{\infty }{%
\int }}\frac{\cos ytdR_{M}\left( t\right) }{\alpha ^{2}+t^{2}}=O\left(
T^{1-2\sigma }\log ^{-2\sigma }\left| T\right| \max \left\{ \frac{1}{\sigma
\log \left| T\right| },\log \left| \frac{T}{\sigma }\right| \right\} \right)
\text{,}
\end{equation*}
for $\alpha =\sigma +iT$,
$\frac{1}{2}>\delta \geq \sigma >0$, as $\left| T\right| \rightarrow \infty $.

\vskip .10in
Other integrals on the right hand side of \eqref{LogDerDva} may be
easily estimated using the approximation of functions under the
integral sign. This completes the proof of the theorem in the case
when $k=0$.

\vskip .10in
The proof in the case $k =1$ is derived by multiplying the formula
\eqref{LogDerDva} by $(1+x^{2\alpha})$ then differentiating the
resulting identity, after which one estimates of the integrals
using the stated result when $k=0$. Differentiation under the integral sign is
justified due to the uniform convergence of the integral for
$1/2>\delta \geq \sigma \geq \sigma_0 >0$. The case $k>1$ is
proved analogously by induction.
\end{proof}

\vskip .20in
\subsection{The completed function $\Xi_{M}$}

\vskip .10in
In this section we recall the notation and results from
\cite{Fis87}.  The notation of \cite{Fis87} is adjusted to
our setting; we take $k=0$, dimension $d=1$ and
$\tau ^{\ast }=n_{1}$.

\vskip .10in
The completed function $\Xi_{M}$ associated to the Selberg zeta function is defined
by
$$
\Xi_{M}(s)=\Xi _{I}(s)\Xi _{M, \textrm {hyp}}(s)
\Xi _{M, \textrm {par}}(s)\Xi _{M, \textrm {ell}}(s)
$$
where $\Xi _{M, \textrm {hyp}}(s)=Z_{M}(s)$ is
the Selberg zeta function and the remaining functions are associated to the
identity, parabolic, and elliptic elements in the underlying uniformizing group.
The logarithmic derivative of the identity term $\Xi _{I}$ is given by
\begin{equation}
-\frac{1}{2s-1}\frac{\Xi _{I}^{\prime }(s)}{\Xi
_{I}(s)}=\frac{\mathrm{vol}(M) }{2\pi }\frac{\Gamma ^{\prime
}}{\Gamma }(s); \label{XiIdDerivative}
\end{equation}
see Remark 3.1.3 in \cite{Fis87}.  The function $\Xi _{M, \textrm {ell}}(s)$ is computed in
Corollary 2.3.5 of \cite{Fis87}; using Stirling's formula, one can show that
\begin{equation}
\frac{1}{2s-1}\frac{\Xi _{M, \textrm {ell}}^{\prime }(s)}{\Xi _{M, \textrm {ell}}(s)}=O\left( \frac{1}{%
\left\vert t\right\vert }\log \left\vert t\right\vert \right)
\label{XiEllDerivative}
\end{equation}
for any $s=\sigma +it$, $\sigma \leq 1/2$, as $\vert t\vert \rightarrow \infty$.

\vskip .10in
The function $\Xi _{M, \textrm {par}}(s)$ is described in \cite{Fis87}; see
Definition 3.1.4.  For our purposes, it suffices to relate $\Xi _{M, \textrm {par}}(s)$
to the scattering determinant $\phi_{M}(s)$, so that we obtain an expression for $Z_{M}H_{M}(s)$.
The following computations derive such an expression for $Z_{M}H_{M}(s)$.

\vskip .10in
Let $\left\{ p_{1},...,p_{N_0}\right\} $ denote the
set of poles of $\phi_{M} $ lying in $\left( 1/2,1\right] $,
counted with multiplicities; let $q_1, ..., q_{N_{1}}$ denote the
set of real zeros of $\phi_{M}$ larger than $1/2$ and let
$\{q_{n}\}_{n>N_{1}}$ denote the set of zeros of
$\phi_{M}$ with positive imaginary parts, counted with
multiplicities. In the notation of Definition 3.2.2 from \cite{Fis87}, we
set $\mathcal{P} _{M}\equiv 1$ if $n_{1}=0$, otherwise we define
$$
\mathcal{P}_M(s) := f_{1}(s) f_{2}(s)
$$
where
$$
f_{1}(s) := \prod \limits _{n=1}^{N_{1}} \left( 1+\frac{s-1/2}{q_n -1/2}\right) \exp \left[ \frac{1}{2} \left( \frac{s -1/2}{q_n -1/2} \right)^{2} \right]
$$
and
$$
f_{2}(s) := \prod _{n \geq N_1 +1}  \left( 1+\frac{s-1/2}{q_n -1/2}\right) \left( 1+\frac{s-1/2}{\overline{q_{n}} -1/2}\right) \exp \left[ \frac{1}{2} \left( \frac{s -1/2}{q_n -1/2} \right)^{2} + \frac{1}{2} \left( \frac{s -1/2}{\overline{q_{n}}-1/2} \right)^{2} \right].
$$

\noindent
The infinite product which defines $f_{2}$
converges uniformly on compact subsets of $\mathbb{C}$ and defines an entire function of finite order.

\vskip .10in
\begin{lemma} \label{ProdViaZ_M H_M}
For all $s\in \mathbb{C}$, the product $\left( \Xi_M \mathcal{P}_M\right)(1-s)$ can be expressed as
\begin{eqnarray} \label{prodformula}
\left( \Xi_M \mathcal{P}_M\right) (1-s) &=&(Z_M H_M)(s)\cdot \Xi
_{I}(s)\cdot \Xi
_{M, \mathrm{ell}}(s)\cdot \frac{\pi ^{n_{1/2}}d(1)}{\phi_M (1/2)}\mathfrak{g}
_{1}^{-s-1}\cdot  \\ \notag
&&\left( s-\frac{1}{2}\right) ^{\frac{1}{2}\textrm{\rm Tr}\left( I_{n_{1}}-\Phi_M (\frac{1}{%
2})\right) -n_{1}}\cdot \Gamma
(s)^{-n_{1}}\overset{N_0}{\underset{m=1}{\prod }}\left(
\frac{s-p_{m}}{1/2-p_{m}}\right). \notag
\end{eqnarray}
\end{lemma}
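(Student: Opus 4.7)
The plan is to prove this identity by direct computation, combining the factorization $\Xi_M = \Xi_I \cdot Z_M \cdot \Xi_{M,\text{par}} \cdot \Xi_{M,\text{ell}}$ with the functional equation \eqref{functeqMain} and the decomposition $\phi_M = K_M H_M$ from \eqref{constants_c1_c2}--\eqref{Dirichletseriespart}. The key observation is that the Weierstrass product $\mathcal{P}_M$ was designed precisely so that its zeros cancel the nontrivial zeros/poles of $\phi_M$ that appear in Fischer's definition of $\Xi_{M,\text{par}}$, leaving only a clean finite product together with Gamma and $(s-1/2)$-factors.

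First, I would expand $(\Xi_M \mathcal{P}_M)(1-s)$ factor by factor. The hyperbolic factor $\Xi_{M,\text{hyp}}(1-s) = Z_M(1-s)$ is rewritten using the functional equation \eqref{functeqMain}, giving
\begin{equation*}
Z_M(1-s) = (Z_M H_M)(s)\, K_M(s)\, \eta_M(s)^{-1}.
\end{equation*}
For the identity and elliptic factors, I would use the symmetry coming from \eqref{XiIdDerivative} and Corollary 2.3.5 of \cite{Fis87} to express $\Xi_I(1-s)$ and $\Xi_{M,\text{ell}}(1-s)$ as $\Xi_I(s)\,\Xi_{M,\text{ell}}(s)$ multiplied by elementary factors that will combine with $\eta_M(s)^{-1}$ and pieces of $K_M(s)$. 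Next, the parabolic factor $\Xi_{M,\text{par}}(1-s)$ is unpacked via Fischer's Definition 3.1.4: it consists of a Gamma-function prefactor proportional to $\Gamma(s)^{-n_1}$, a $(s-1/2)$-power whose exponent records the order of $\phi_M$ at $1/2$ (hence the $\tfrac12\text{Tr}(I_{n_1}-\Phi_M(1/2))-n_1$ appearing in the statement), and the Weierstrass product over the zeros $q_n$ of $\phi_M$ and the poles $p_1,\dots,p_{N_0}$ in $(1/2,1]$. Multiplication by $\mathcal{P}_M(1-s)$ cancels the Weierstrass product over the $q_n$ (by construction of $f_1$ and $f_2$), leaving exactly the finite product $\prod_{m=1}^{N_0}(s-p_m)/(1/2-p_m)$.

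Finally, I would collect the explicit constants. The factor $\mathfrak{g}_1^{-s-1}$ emerges from combining the exponentials $e^{c_1 s + c_2}$ in $K_M$ (with $c_1 = -2\log\mathfrak{g}_1$, $c_2 = \log d(1)$) with an analogous contribution at the fixed point $s=1/2$; the $\pi^{n_1/2}$ is the leftover from $K_M(s)$ after the $(\Gamma(s-1/2)/\Gamma(s))^{n_1}$ piece cancels against the parabolic Gamma factors, leaving $\Gamma(s)^{-n_1}$; and the constant $d(1)/\phi_M(1/2)$ arises from evaluating the normalizations at the symmetry point $s=1/2$.

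The principal obstacle is the bookkeeping: one must carefully identify Fischer's definition of $\Xi_{M,\text{par}}$ (formulated in terms of $\phi_M$ globally) with the factorization $\phi_M = K_M H_M$ used here, and then verify that the Weierstrass regularization in $\mathcal{P}_M$ cancels every contribution from the infinitely many zeros of $\phi_M$, so that the only residual combinatorial data are the $N_0$ poles in $(1/2,1]$ and the order of $\phi_M$ at $s=1/2$. All remaining manipulations are routine applications of the functional equations of $Z_M$, $\phi_M$, and $\eta_M$, combined with standard Gamma-function identities.
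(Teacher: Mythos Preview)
Your approach differs from the paper's in a substantive way. The paper does not use the Selberg functional equation \eqref{functeqMain} involving $\eta_M$ at all; instead it quotes two ready-made results from Fischer \cite{Fis87}. First, it invokes Fischer's functional equation (3.2.4) on p.~123, which already expresses $(\Xi_M\mathcal{P}_M)(1-s)$ as $(\Xi_M\mathcal{P}_M)(s)$ times $\mathfrak{g}_1^{2s-1}\phi_M(s)/\phi_M(1/2)$ and a finite product over the poles $p_m$. Second, it uses Corollary 2.4.22 of \cite{Fis87}, which gives $(\Xi_{M,\mathrm{par}}\mathcal{P}_M)(s)$ in closed form --- a power of $(s-1/2)$, a power of $\mathfrak{g}_1$, a factor $\Gamma(s+1/2)^{-n_1}$, and a finite product over the $p_m$. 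One then writes $\phi_M(s)$ in terms of $H_M(s)$ and multiplies everything out; no $\eta_M$, no reflection of individual factors.

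Your bottom-up route --- rewriting $Z_M(1-s)$ via $\eta_M$, then separately handling the reflections of $\Xi_I$, $\Xi_{M,\mathrm{ell}}$, $\Xi_{M,\mathrm{par}}$ and $\mathcal{P}_M$ --- is not wrong in principle, but it essentially re-derives Fischer's functional equation from scratch. The step you describe as ``multiplication by $\mathcal{P}_M(1-s)$ cancels the Weierstrass product over the $q_n$'' is exactly where the work lies: the regularizing exponentials in $f_1,f_2$ and in Fischer's $\Xi_{M,\mathrm{par}}$ must be matched term by term, and doing this correctly amounts to reproving Corollary 2.4.22. Likewise, showing that $\eta_M(s)^{-1}$ combines with the reflection factors of $\Xi_I$ and $\Xi_{M,\mathrm{ell}}$ to leave nothing behind is tantamount to reproving (3.2.4). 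The paper's approach buys brevity and safety by outsourcing both computations to \cite{Fis87}; your approach would be more self-contained but leaves the heaviest bookkeeping implicit.
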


\vskip .10in
\begin{proof}
From the functional equation (3.2.4) on p. 123
of \cite{Fis87}, we have, for all $s\in \mathbb{C}$
\begin{equation}
\left( \Xi_M \mathcal{P}_M\right) (1-s)=\left( \Xi_M \mathcal{P}_M \right) (s)%
\mathfrak{g}_{1}^{2s-1}\overset{N_0}{\underset{m=1}{\prod }}\left( \frac{%
s-p_{m}}{1-s-p_{m}}\right) \frac{1}{\phi_M (1/2)}\phi_M (s).
\label{XIP}
\end{equation}
On the other hand, by the
Corollary 2.4.22 of \cite{Fis87} it is easy to see that
$$
\left( \Xi _{M, \mathrm{par}}\mathcal{P}_M\right) (s)=\left( s-1/2\right) ^{\frac{1}{2}%
\textrm{\rm Tr}\left( I_{n_{1}}-\Phi_M (\frac{1}{2})\right)
}\mathfrak{g}_{1}^{-s}\left(
\frac{1}{\Gamma (s+1/2)}\right) ^{n_{1}}\overset{N_0}{\underset{m=1}{\prod }}%
\left( 1+\frac{s-1/2}{p_{m}-1/2}\right).
$$

We now write $\phi_M$ as
$$
\phi_M (s) = \pi ^{n_1 /2} \mathfrak{g}_1 ^{-2s} d(1)
(s-1/2)^{-n_1} \left( \frac{\Gamma (s+1/2)}{\Gamma (s)} \right)^{
n_1} H_M (s).
$$
The result follows through direct and straightforward computations involving
the the definition of $\Xi _M$ together with \eqref{XIP}.
\end{proof}

\vskip .20in
\subsection{Littlewood's theorem}

\vskip .10in
Several components of the main theorem will be provided using a
general theorem from complex analysis due to Littlewood, which we
now quote from \cite{Ti}.

\vskip .10in Let $f(z)$ be a meromorphic functions which is
non-zero and has $n$ poles along the rectangular contour ${\cal C}$
which is bounded by the lines $x = x_{1}$, $x=x_{2}$, $y=y_{1}$
and $y=y_{2}$.  Let $F(z) = \log f(z)$ be the logarithm of $f(z)$
defined by analytic continuation along ${\cal C}$, and
$N(x';f,{\cal C})$ denote the number of zeros of $f$ minus the
number of poles of $f$ in the sub-region of ${\cal C}$ where $x >
x'$.  Then
\begin{equation}\label{littlewood}
\int\limits_{\cal C}F(z) dz = -2 \pi i \int\limits_{x_{1}}^{x_{2}}
N(x;f,{\cal C})dx.
\end{equation}

\noindent We refer the reader to \cite{Ti} for an
elementary proof of (\ref{littlewood}).

\vskip .20in
\subsection{On generalized Backlund equivalent for the Lindel\"{o}f hypothesis}

\vskip .10in
An important ingredient in the proof of the Main Theorem is a bound on the growth of the
function $Z_M H_M$ on the critical line $\textrm{\rm Re}(s) = 1/2$.
We obtain the bound using a slight modification of Proposition 2 from \cite{Gar07}, which we now state.

\vskip .10in
\begin{proposition} \label{Lindel}
Let $f(s)$ be a mermorphic function for all $s \in \mathbb C$ which is holomorphic in the
region for $\vert \Im (s) \vert \geq t_0 >0$, for some fixed $t_0$.  Let
$P(t): \mathbb{R} \rightarrow \mathbb{R}$ be non-decreasing function such that $P(t)\geq 2$.
Let $N(\sigma, f, T)$ denote the number of zeros $\rho$ of $f$ in the region
$\Re (\rho) > \sigma$; $0 \leq \Im (\rho) \leq T$.

\vskip .10in
Assume there exist constants $\sigma_0>1/2$ and $\omega >0$ such that for
$\sigma_0 - \omega \leq \Re (s) \leq \sigma_0 + \omega$ we have

\vskip .10in
$$
\vert f(s) \vert \geq c >0 \,\,\,\,\,\textrm{\rm  and} ,\,\,\,\,\ (f'/f)(s) = o(P(t))
\,\,\,\,\,\textrm{\rm as $t = \textrm{\rm Im}(s) \rightarrow \infty$.}
$$

\vskip .10in
\noindent
Furthermore, assume that $\vert f(s) \vert >0$ for $\Re (s) \geq \sigma_0 + \omega$ and that for some fixed number $D>0$ we have
$$
f(s) = O\left( (P( t)))^{D}\right)
\,\,\,\,\,\textrm{\rm as $t = \textrm{\rm Im}(s) \rightarrow \infty$, uniformly for $\Re (s) \geq 2-3\sigma_0$.}
$$

\vskip .10in
Then if the estimate $N(\sigma, f, T+1) - N(\sigma, f, T) = o(P(T))$holds for all
$\sigma \geq 1/2$ as $T \rightarrow \infty$, then
$f(1/2 + it) = O_{\epsilon} \left((P(t))^{\epsilon}\right)$ as $t \rightarrow \infty$.
\end{proposition}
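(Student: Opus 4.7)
The statement is a generalization of Backlund's classical reformulation of the Lindelöf hypothesis for $\zeta_{\mathbb{Q}}$, and I would prove it following the approach of \cite{Gar07} referenced in the proposition. The strategy has two stages: an application of Littlewood's theorem to translate the zero-counting hypothesis into an integrated bound for $\log|f|$ on the critical line, followed by a convexity argument to produce the desired pointwise upper bound.

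First I would apply Littlewood's formula \eqref{littlewood} to the rectangular contour $\mathcal{C}_T$ with vertices $1/2 + iT$, $(\sigma_0+\omega) + iT$, $(\sigma_0+\omega) + i(T+1)$, $1/2 + i(T+1)$, for arbitrarily large $T > t_0$ (shifted by $O(1)$ if needed to avoid the $t$-coordinates of zeros of $f$). Because $|f| \geq c > 0$ on the right vertical side, a branch of $\log f$ is well-defined there and extends by analytic continuation around $\mathcal{C}_T$. Separating real and imaginary parts of \eqref{littlewood} and rearranging yields the identity
\[
\int_T^{T+1}\log|f(\tfrac{1}{2}+iy)|\,dy = \int_T^{T+1}\log|f((\sigma_0+\omega)+iy)|\,dy + I_T + 2\pi\int_{1/2}^{\sigma_0+\omega} N(x;f,\mathcal{C}_T)\,dx,
\]
where $I_T$ collects the two horizontal-edge integrals of $\arg f$. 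The first integral on the right is $O(\log P(T))$ because $\log c \leq \log|f| \leq D\log P(T) + O(1)$ on the right edge. The zero-count integral is at most $(\sigma_0+\omega-\tfrac{1}{2})(N(\tfrac{1}{2}, f, T+1) - N(\tfrac{1}{2}, f, T)) = o(P(T))$ by hypothesis. The term $I_T$ is estimated by bounding the total variation of $\arg f$ along each horizontal segment; a standard Jensen-type argument applied to $\operatorname{Re} f$ in a disc centered on the right edge yields $I_T = O(\log P(T))$. Combining these produces the integrated estimate $\int_T^{T+1}\log|f(\tfrac{1}{2}+iy)|\,dy = o(P(T))$.

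In the second stage I would upgrade this integrated estimate to the pointwise bound $f(1/2+it) = O_\epsilon(P(t)^\epsilon)$. This proceeds by first deriving, via Phragmén--Lindelöf applied between the vertical lines $\operatorname{Re}(s) = 2-3\sigma_0$ and $\operatorname{Re}(s) = \sigma_0+\omega$, a preliminary polynomial bound $|f(\sigma+it)| = O(P(t)^{D_0})$ throughout the strip. Combining this with the integrated estimate and with the hypothesis $(f'/f)(s) = o(P(t))$ in the strip $\sigma_0 - \omega \leq \operatorname{Re}(s) \leq \sigma_0 + \omega$, one invokes Hadamard's three-lines theorem on appropriate powers $|f|^k$ (equivalently, a Borel--Carathéodory-type estimate) to transfer the integrated bound into the sub-polynomial pointwise estimate on the critical line.

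The principal obstacle is this final upgrade: the mean-value estimate controls \emph{averages} of $\log|f|$ but gives no a priori control over potential large peaks of $\log|f|$ on $\operatorname{Re}(s) = 1/2$. One must therefore combine the integrated bound with the Lipschitz-type information provided by the hypothesis on $f'/f$ in the strip to the right of the critical line, propagated toward $\operatorname{Re}(s) = 1/2$ by convexity. The estimate $I_T = O(\log P(T))$ in the first stage is also technically delicate, as it requires applying Jensen's theorem to the analytic continuation of $\operatorname{Re} f$ in a way that keeps the dependence on $T$ uniform.
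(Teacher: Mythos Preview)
The paper does not prove Proposition~\ref{Lindel} from scratch; it observes that the argument of \cite{Gar07}, Proposition~2, carries over verbatim under the slightly relaxed hypotheses, and records that the key tools there are Landau's theorem and Hadamard's three circles theorem, applied in discs $|s-s_0|\le 2(\sigma_0-\tfrac12-\delta)$ about $s_0=\sigma_0+iT$. Your two-stage plan via Littlewood's theorem is a genuinely different route, and the second stage has a real gap.

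The Littlewood step on the short rectangle $[\tfrac12,\sigma_0+\omega]\times[T,T+1]$ yields at best
\[
\int_T^{T+1}\log\bigl|f(\tfrac12+iy)\bigr|\,dy \;=\; O(\log P(T)) \;+\; o(P(T)),
\]
since the zero-count term $2\pi\int_{1/2}^{\sigma_0+\omega}N(x;f,\mathcal C_T)\,dx$ is only $o(P(T))$ under the hypothesis. The target, however, is $\log|f(\tfrac12+it)|\le \epsilon\log P(t)+O_\epsilon(1)$, an exponentially smaller scale. In fact your integrated estimate is vacuous: a direct Phragm\'en--Lindel\"of interpolation between $\Re(s)=2-3\sigma_0$ and $\Re(s)=\sigma_0+\omega$ already gives $\log|f(\tfrac12+it)|=O(\log P(t))$, hence $\int_T^{T+1}\log|f|\,dy=O(\log P(T))=o(P(T))$ \emph{without invoking the zero hypothesis at all}. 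So the Littlewood identity extracts nothing new from the assumption $N(\sigma,f,T{+}1)-N(\sigma,f,T)=o(P(T))$. The proposed upgrade via three-lines or Borel--Carath\'eodory cannot manufacture the missing factor, and the only derivative control $f'/f=o(P(t))$ you have is confined to the strip $[\sigma_0-\omega,\sigma_0+\omega]$, far from $\Re(s)=\tfrac12$.

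The disc-based argument of \cite{Gar07} avoids this by working pointwise from the outset: one centers concentric circles at $\sigma_0+iT$, divides $f$ by the (few) zeros lying in the disc, applies a Landau/Borel--Carath\'eodory estimate to the zero-free quotient to obtain $|\log g|=O(\log P(T))$ on an inner circle (using $|f(s_0)|\ge c$ and $\max|f|\le P(T)^D$ on the outer circle), and then pushes the bound out toward $\Re(s)=\tfrac12$ by Hadamard three circles. The zero hypothesis enters only to keep the divided-out product under control; the main term is already of order $\log P(T)$, and the three-circles ratio is what produces the arbitrary $\epsilon$. That local, pointwise mechanism is precisely what your rectangle-and-integrate approach lacks.
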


\vskip .10in
There are slight differences between our statement above and Proposition 2 from \cite{Gar07}.  Firstly,
we assume the function $f$ depends on a single complex variable $s$, not necessarily a member of
a family of mermorphic functions.  Secondly, the author of \cite{Gar07}
assumes that $f$ has a finite number of poles which lie in a compact set, with the
goal of applying the result to the Selberg zeta function associated to a compact hyperbolic Riemann surface.
A review of the proof of Proposition 2 from \cite{Gar07} reveals that the argument is based on Landau's theorem
(see Lemma 8 of \cite{Gar07}) and Hadamard's three circles theorem (see Lemma 9 of \cite{Gar07}).  These classical
results are applied to the function $f(s)$ in the neighborhood $\vert s-s_0 \vert \leq 2(\sigma_0 - 1/2 -\delta)$
of the point $s_0=\sigma_0+iT$ for sufficiently large $T$.  The proof given in \cite{Gar07} carries
through without any changes whatsoever under the assumptions we state above.

\noindent
\vskip .10in
We refer the reader to \cite{Gar07} for the proof and various interesting
generalizations of Proposition \ref{Lindel}.

\vskip .20in
\section{Zeros in a half plane $\Re (s) <1/2$}

\vskip .10in
In this section we will prove part (a) of the Main Theorem. In
fact, we will prove more than stated, since our analysis will
yield regions where each of the functions $\textrm{Re}((Z_{M}H_{M})')$ and
$\textrm{Im}((Z_{M}H_{M})')$ are non-vanishing.

\vskip .10in
\begin{proposition} \label{LemmaNonVan}

\vskip .05in
a) For $\sigma <1/2$, there exists $t_{0}>0$, which may depend on $\sigma$,
such that
$$
\Re \left( \left( Z_M H_M \right)
^{\prime }(\sigma +it)\right) \neq 0
\,\,\,\,\,\textrm{\rm for all $t$ such that $\vert t \vert > t_{0}$.}
$$

\vskip .10in
b) For every constant $C>0$ and arbitrary $ -C < \sigma_0' < 1/2$
there are at most finitely many zeros of $ (Z_M H_M)'(s)$ inside
the strip $-C \leq \Re (s) \leq \sigma_0'$
\end{proposition}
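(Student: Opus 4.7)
The plan is to combine Lemma \ref{lemmaFuctEq} with the main functional equation \eqref{functeqMain} to obtain the factorization
\[
(Z_M H_M)'(s) \;=\; f_M(s)\,\widetilde{Z}_M(1-s)\,(Z_M H_M)(s).
\]
As recalled in Section 1.3, $(Z_M H_M)(s)$ is non-vanishing on $\{\Re s<1/2\}$, so in this half-plane the zero-set and sign behaviour of $\Re((Z_M H_M)'(s))$ are controlled entirely by the prefactor $f_M(s)\widetilde{Z}_M(1-s)$ together with the phase of $(Z_M H_M)(s)$.

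Fix $\sigma<1/2$. Using the elementary asymptotic $\tan\pi(1/2-s)=-i\,\mathrm{sign}(t)+O(e^{-2\pi|t|})$ one obtains
\[
f_M(\sigma+it) \;=\; -\mathrm{vol}(M)|t| \;-\; i\,\mathrm{sign}(t)(1/2-\sigma)\mathrm{vol}(M) \;+\; O(e^{-2\pi|t|}).
\]
For $\widetilde{Z}_M(1-s)$ I would substitute the explicit formula \eqref{DefEtaLogDer} for $\eta_M'/\eta_M$ into \eqref{ZtildaDef}, estimate $K_M'/K_M$ by Stirling's formula, and control $D_M(1-s)$ using the bound from Theorem 2.6 of Section 2.5 (which applies since $\Re(1-s)>1/2$). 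The leading term $\mathrm{vol}(M)(s-1/2)\tan\pi(s-1/2)$ of $\eta_M'/\eta_M$ cancels the $f_M(s)$ in the denominator of \eqref{ZtildaDef}, producing $\widetilde{Z}_M(1-s)=1+o(1)$ as $|t|\to\infty$, uniformly for $\sigma$ in compact subintervals of $(-\infty,1/2)$.

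Write $A(t)+iB(t):=f_M(s)\widetilde{Z}_M(1-s)$ and $P(t)+iQ(t):=(Z_M H_M)(s)$. From Step 2, $A(t)\sim-\mathrm{vol}(M)|t|$ and $B(t)\to-\mathrm{sign}(t)(1/2-\sigma)\mathrm{vol}(M)\neq 0$, so both are non-vanishing for $|t|$ large, while $(P,Q)\neq(0,0)$. The identity
\[
\Re\bigl((Z_M H_M)'(\sigma+it)\bigr) \;=\; A(t)P(t)-B(t)Q(t)
\]
is dominated by $A(t)P(t)$ at $t$-values where $|P(t)|$ is not exceptionally small. The delicate case—and what I expect to be the main technical obstacle—is handling $t$ near zeros of $P$: at such points $Q(t)\neq 0$ (because $(Z_M H_M)(s)\neq 0$), so the term $-B(t)Q(t)$ is nonzero, and one has to show that, together with the next-order corrections, it is not cancelled by the residual $A(t)P(t)$. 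A refined expansion of $(Z_M H_M)(s)$ from the functional-equation representation, combined with control on the rate at which $\arg((Z_M H_M)(\sigma+it))$ can approach $\pm\pi/2$ modulo $\pi$, closes this case. The parallel analysis with $(P,Q)$ and $(A,B)$ interchanged yields the non-vanishing of $\Im((Z_M H_M)'(\sigma+it))$ in the complementary regions promised in the section preamble.

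Part (b) then follows from part (a). The asymptotics above are uniform in $\sigma$ over the compact interval $[-C,\sigma_0']$, so there is a single $t_0>0$ with $(Z_M H_M)'(s)\neq 0$ on $\{-C\leq\Re s\leq\sigma_0',\,|t|>t_0\}$. On the remaining compact rectangle the meromorphic function $(Z_M H_M)'$ has only finitely many zeros, giving the claim.
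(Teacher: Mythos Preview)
Your factorization $(Z_M H_M)'(s) = f_M(s)\,\widetilde{Z}_M(1-s)\,(Z_M H_M)(s)$ and the asymptotic analysis of $f_M$ and $\widetilde{Z}_M$ are correct and match the paper's ingredients. The gap is in your ``delicate case'': you correctly recognize that controlling $\Re\bigl((Z_M H_M)'\bigr)$ directly forces you to track the phase of $(Z_M H_M)(s)$ along the vertical line, and you do not actually supply an argument---``a refined expansion \dots\ closes this case'' is a placeholder, not a proof. Controlling how fast $\arg\bigl((Z_M H_M)(\sigma+it)\bigr)$ sweeps through $\pi/2$ modulo $\pi$ is genuinely hard and is not needed here.

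The paper sidesteps this entirely by working with the logarithmic derivative. In your own notation, your factorization already says
\[
\frac{(Z_M H_M)'}{(Z_M H_M)}(s) \;=\; f_M(s)\,\widetilde{Z}_M(1-s) \;=\; A(t)+iB(t),
\]
so $\Re\bigl((Z_M H_M)'/(Z_M H_M)\bigr)(\sigma+it)=A(t)\sim -\mathrm{vol}(M)\,|t|$ directly, with no $P,Q$ to manage. Since $(Z_M H_M)(\sigma+it)\neq 0$ for $\sigma<1/2$ and $|t|$ large, this immediately gives $(Z_M H_M)'(\sigma+it)\neq 0$, which is what part (b) and every downstream application actually use. (Note that the implication from $\Re\bigl((Z_M H_M)'/(Z_M H_M)\bigr)\neq 0$ to the literal statement $\Re\bigl((Z_M H_M)'\bigr)\neq 0$ is not valid in general; the paper also makes this jump in one line, so the conclusion of part (a) should be read as non-vanishing of $(Z_M H_M)'$ rather than of its real part specifically.)

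For part (b), your uniformity-over-$[-C,\sigma_0']$ argument is essentially the paper's: it uses the factorization \eqref{DerivFunctEq} and the uniform estimate $\widetilde{Z}_M(1-s)=1+o(1)$ on the strip to conclude that each factor $f_M(s)\eta_M(s)K_M^{-1}(s)$, $\widetilde{Z}_M(1-s)$, and $Z_M(1-s)$ contributes only finitely many zeros there.
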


\begin{proof}
We first present the proof of part (a).  By taking the logarithmic derivative of
the functional equation \eqref{functeqMain} we get for $s=\sigma +it$ with $\sigma <1/2$,
the equation

\vskip .10in
\begin{equation} \label{Z_M H_MderEq}
\frac{\left( Z_M H_M\right) ^{\prime }}{\left( Z_M H_M\right)
}(\sigma +it)=\frac{\eta_M
^{\prime }}{\eta_M }(\sigma +it)-\frac{K_M^{\prime }}{K_M}(\sigma +it)-\frac{%
Z_M^{\prime }}{Z_M}(1-\sigma -it).
\end{equation}

\vskip .10in
\noindent
From the definition \eqref{DefEtaLogDer} of $\eta_M'/\eta_M$ and $K_{M}$, one
 can use Stirling's formula, together with the bound $0<\theta < \pi $, to show that

\vskip .10in
$$
\text{Re }\left ( \frac{\eta_M ^{\prime }}{\eta_M }(\sigma
+it)\right )=- \mathrm{vol}(M) t+O(\log \vert t \vert)
\,\,\,\,\,\textrm{\rm and}\,\,\,\,\,
\frac{K_M^{\prime }}{K_M}(\sigma +it)=O(\log \vert t \vert)\text{,}
$$
for $\sigma < 1/2$ and as $\vert t\vert \rightarrow \infty$.  Therefore,
\vskip .10in
$$
\text{Re }\left( -\frac{\left( Z_M H_M\right) ^{\prime }}{\left( Z_M H_M\right) }%
(\sigma +it)\right) =\mathrm{vol}(M) t+O(\log \vert t \vert)+\text{Re }%
\left( \frac{Z_M^{\prime }}{Z_M}(1-\sigma -it)\right).
$$

\vskip .10in
\noindent
Replacing $\sigma$ by $1/2 - \sigma$ in \eqref{BoundDer} we get, for $\sigma <1/2$,
$$\Re \left( \frac{Z_M^{\prime
}}{Z_M}(1-\sigma -it)\right) = O \left(\frac{ \vert t \vert}{(1/2-\sigma) \log \vert t\vert} \right)
\,\,\,\,\,\textrm{\rm as $\vert t\vert \rightarrow \infty$},
$$
so then
\vskip .10in
\begin{equation*}
\Re \left ( - \frac{\left( Z_M H_M\right) ^{\prime }}{\left( Z_M
H_M\right) }(\sigma +it) \right )= \mathrm{vol}(M) t+ O \left(\frac{ \vert t \vert}{(1/2-\sigma) \log \vert t\vert} \right),
\end{equation*}

\vskip .10in
\noindent
for $\sigma <1/2$ as $ t\rightarrow \pm \infty $. Therefore,
there exists $t_0 >0$ such that
$$
\Re \left ( \frac{\left( Z_M H_M\right) ^{\prime }}{\left( Z_M H_M\right) }(\sigma +it) \right
) \neq 0
\,\,\,\,\,\textrm{\rm for all $s= \sigma +it$, with $\vert t \vert > t_0$.}
$$

\vskip .10in
On the other hand, the non-trivial zeros of the function $Z_M H_M$ are either non-trivial
zeros $\rho =\frac{1}{2}\pm ir_{n}$ of $Z_M$ or zeros $\rho $ of $\phi_M $.  All except finitely
many zeros of $\phi_M $ have real part bigger than $1/2$; therefore, $Z_M H_M (\sigma +it)\neq 0$
for $\sigma <1/2$ and $t>t_{0}$. Therefore, we conclude that
$\Re \left( \left( Z_M H_M \right) ^{\prime }(\sigma +it) \right)\neq 0$ for all $t>t_{0}$.
With all this, the proof of part a) is complete.

\vskip .10in
To prove part (b), we employ Lemma \ref{lemmaFuctEq}. Recall the function
$\widetilde{Z}_M(s)$ which is defined in (\ref{ZtildaDef}).  Let us write
$\widetilde{Z}_M(s)=1+Z_{M,1}(s)$. Then

\vskip .10in
\begin{align} \label{Z1def}
-f_{M}(s)&Z_{M,1}(s)= \pi \underset{0<\theta (R)<\pi
}{\underset{\left\{
R\right\} }{\sum }}\frac{1}{M_{R}\sin \theta }\frac{\cos (2\theta -\pi)(s-1/2)}{\cos \pi(s-1/2)}-2n_{1}\log 2
 \\
 &- n_{1}\left( \frac{\Gamma ^{\prime }}{\Gamma }(1/2+s)+\frac{%
\Gamma ^{\prime }}{\Gamma }(3/2-s) - \frac{\Gamma '}{ \Gamma}
(1/2-s) + \frac{\Gamma '}{ \Gamma} (1-s) \right)+ \frac{Z_M '}{Z_M}(s), \notag
\end{align}

\vskip .10in
\noindent
where $f_{M}$ is defined in (\ref{definitOf f_M}).

\vskip .10in
As in the proof of part (a), we can use Stirling's formula and \eqref{BoundDer}
to arrive at the bound
\vskip .10in
$$
Z_{M,1}(\sigma_1 + it) = O\left( \frac{(\vert t \vert \log \vert t \vert)^{2-2\sigma_1}}
{(\sigma_1 -1/2) \vert t \vert }\right)
\,\,\,\,\,\textrm{\rm as $\vert t \vert \rightarrow \infty$.}
$$

\vskip .10in
\noindent
for $\sigma_1 >1/2$ and $(\sigma_1 + it) \in  \mathbb{C}\backslash \underset{n\in \mathbb{Z}}{\cup }B_{n}$
where $B_{n}$ are small circles of fixed radius centered at integers.
In particular, for $\sigma_1
>1/2$ and $(\sigma_1 + it) \in  \mathbb{C}\backslash \underset{n\in \mathbb{Z}}{%
\cup }B_{n}$, function $Z_{M,1}(\sigma_1 +iT)$ is uniformly
bounded in $T$.  Therefore, $\widetilde{Z}_M(1-s)=1+o(1)$, as $\Im (s) \rightarrow \pm \infty$
in the strip $-C \leq \Re (s) \leq \sigma_0' <1/2$, hence $\widetilde{Z}_M(1-s)$ has
finitely many zeros in this strip.

\vskip .10in
Since  $Z_{M}(s)$ has only finitely many zeros for $\Re (s)>1/2$, the function $\widetilde{Z}_M(1-s)Z_M(1-s)$
will have finitely many zeros in the strip $-C \leq \Re (s) \leq \sigma_0' <1/2$.
Equation \eqref{DerivFunctEq} then implies that the set of zeros of $(Z_M H_M)'(s)$ in the strip $-C \leq \Re (s) \leq \sigma_0' <1/2$ is finite since the factor $\Phi_M(s):= f_M(s) \eta_M(s) K_M^{-1}(s) $ of the functional equation \eqref{DerivFunctEq} also has at most finitely many zeros in this strip.
\end{proof}

\vskip .10in
We note that the zeros of $(Z_M H_M)'(s)$ which arise from zeros of $\Phi_M$ can be viewed as trivial zeros.
The trivial zeros of $(Z_M H_M)'(s)$ are in the region $\Re (s)<1/2$ and arise at all negative integers.

\vskip .10in
The above method of examining zeros of the function $(Z_M H_M)'$
has the critical line as its limitation, since the integral
representation \eqref{LogDerDva} and the bounds for the
logarithmic derivative \eqref{BoundDer} hold true only in the half
plane $\Re (s)>1/2$. In order to derive results valid on the
critical line we need a representation on the critical
line. Such a representation exists for the complete zeta function
$\Xi_M (s)$ recalled in section 2.7.

\vskip .10in
\begin{proposition} \label{LemmaCritLine} There exists a number $t_{0}>0$ such that the
following statements hold:

\vskip .10in
a) $\displaystyle \frac{\left( Z_M H_M\right) ^{\prime }}{(Z_M H_M)}(\sigma +it)\neq 0$
for all $\sigma< 1/2$ and all $\vert t \vert > t_{0}$;

\vskip .10in
b) $\displaystyle \frac{\left( Z_M H_M\right) ^{\prime }}{(Z_M H_M)}(1/2 +it)\neq 0$
for all $\vert t \vert > t_{0}$, $t \neq r_n$ for all $n \geq 0$.
\end{proposition}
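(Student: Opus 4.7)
The plan for part (a) is to reuse the real-part estimate of the logarithmic derivative already derived in the course of proving Proposition \ref{LemmaNonVan}(a). Starting from \eqref{Z_M H_MderEq}, Stirling's formula applied to $\eta_M'/\eta_M$ and $K_M'/K_M$, and the bound \eqref{BoundDer} for $Z_M'/Z_M(1-\sigma-it)$ with $\sigma<1/2$, one obtains
\[
\Re\!\left(-\tfrac{(Z_MH_M)'}{Z_MH_M}(\sigma+it)\right)=\mathrm{vol}(M)\,t+O\!\left(\tfrac{|t|}{(1/2-\sigma)\log|t|}\right)
\]
as $|t|\to\infty$. This is bounded away from $0$ for $|t|>t_0$, so the logarithmic derivative itself cannot vanish there; this is exactly the statement of part (a).

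For part (b) the integral representation \eqref{LogDerDva} is only valid for $\Re(s)>1/2$, so the approach of Proposition \ref{LemmaNonVan}(a) does not extend to the critical line. Instead, we would use Lemma \ref{ProdViaZ_M H_M} to rewrite \eqref{prodformula} as $(Z_MH_M)(s)=(\Xi_M\mathcal{P}_M)(1-s)/G(s)$, where $G(s)$ collects $\Xi_I(s)$, $\Xi_{M,\mathrm{ell}}(s)$, $\Gamma(s)^{-n_1}$, the power of $(s-1/2)$, the rational product over $\{p_m\}$, and the constants. Logarithmically differentiating at $s=1/2+it$ splits $(Z_MH_M)'/(Z_MH_M)(1/2+it)$ into two terms: $-G'/G(1/2+it)$ and $-(\Xi_M\mathcal{P}_M)'/(\Xi_M\mathcal{P}_M)(1/2-it)$. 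The first is dominated by the contribution of $\Xi_I$: by \eqref{XiIdDerivative} and Stirling,
\[
\Re\!\left[\tfrac{\Xi_I'}{\Xi_I}(1/2+it)\right]=\tfrac{\mathrm{vol}(M)|t|}{2}+O(\log|t|),
\]
while $\Xi_{M,\mathrm{ell}}$, $\Gamma^{-n_1}$, the $(s-1/2)$-power, and the rational factor each contribute at most $O(\log|t|)$ via \eqref{XiEllDerivative} and direct Stirling-based estimates.

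To treat the second contribution, we would exploit the functional equation \eqref{XIP} together with the reality $(\Xi_M\mathcal{P}_M)(\bar s)=\overline{(\Xi_M\mathcal{P}_M)(s)}$, which holds because the complex zeros of $\phi_M$ are paired in the definition of $\mathcal{P}_M$. Logarithmically differentiating \eqref{XIP} gives $L(1-s)=-L(s)-h'/h(s)$ for $L(s):=(\Xi_M\mathcal{P}_M)'/(\Xi_M\mathcal{P}_M)(s)$, with $h(s)=\mathfrak{g}_1^{2s-1}\prod_m\tfrac{s-p_m}{1-s-p_m}\cdot\tfrac{\phi_M(s)}{\phi_M(1/2)}$ the connecting factor; combining this with the reality relation yields
\[
\Re\!\left[\tfrac{(\Xi_M\mathcal{P}_M)'}{\Xi_M\mathcal{P}_M}(1/2-it)\right]=-\tfrac12\tfrac{h'}{h}(1/2+it),
\]
with the right-hand side real on the critical line. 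The first two factors of $h$ contribute only $O(1)$ and $O(|t|^{-2})$; the remaining $\phi_M'/\phi_M(1/2+it)$ is real (since $|\phi_M|=1$ on the critical line) and $o(|t|)$ via the Weyl's law error estimate \eqref{errorBound}. Assembling the pieces gives
\[
\Re\!\left[\tfrac{(Z_MH_M)'}{Z_MH_M}(1/2+it)\right]=-\tfrac{\mathrm{vol}(M)|t|}{2}+o(|t|),
\]
which is nonzero for $|t|>t_0$; the excepted heights $t=r_n$ are precisely the poles of the logarithmic derivative arising from zeros of $Z_M$.

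The main technical obstacle will be the pointwise control of $\phi_M'/\phi_M(1/2+it)$. The integrated form of this quantity is regulated by the Weyl's law error \eqref{errorBound}, but upgrading this to a pointwise $o(|t|)$ bound requires care because the phase of $\phi_M$ can fluctuate appreciably between consecutive spectral heights $r_n$.
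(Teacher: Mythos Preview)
Your argument for part (a) does not deliver the uniform $t_0$ that the proposition asserts. The real-part estimate you quote from Proposition~\ref{LemmaNonVan}(a) carries the error term $O\bigl(|t|/((1/2-\sigma)\log|t|)\bigr)$, and in that proposition $t_0$ is explicitly allowed to depend on $\sigma$. To beat the main term $\mathrm{vol}(M)\,t$ you need $\log|t|\gg 1/(1/2-\sigma)$, so as $\sigma\to 1/2^{-}$ your threshold $t_0(\sigma)$ blows up. The paper handles this by splitting: for $\sigma<0$ the factor $1/(1/2-\sigma)$ is bounded and Proposition~\ref{LemmaNonVan} suffices uniformly, while for $0\le\sigma\le 1/2$ it abandons \eqref{BoundDer} entirely and instead takes the logarithmic derivative of the identity in Lemma~\ref{ProdViaZ_M H_M}, plugging in Fischer's Hadamard product for $\Xi_M\mathcal{P}_M$. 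It then studies $\operatorname{Im}\bigl[\tfrac{1}{2s-1}\tfrac{(Z_MH_M)'}{Z_MH_M}(s)\bigr]$ and bounds it below by $\tfrac{\mathrm{vol}(M)}{4}\tanh(\pi t)-O((\log t)/t)$, uniformly in $0\le\sigma\le 1/2$; this yields a single $t_0$ and simultaneously proves (b) by setting $\sigma=1/2$.

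For part (b), your functional-equation/reality trick is elegant and your identification of the obstacle is honest, but the obstacle is fatal for the route you chose. You reduce to needing $\phi_M'/\phi_M(1/2+it)=o(|t|)$ pointwise, and this is not available: a scattering zero $q_n=1/2+\eta_n+i\gamma_n$ with $\eta_n$ small contributes $\sim -2\eta_n/(\eta_n^2+(t-\gamma_n)^2)$, which at $t=\gamma_n$ is $-2/\eta_n$, with no a priori lower bound on $\eta_n$. The integrated bound \eqref{errorBound} cannot be upgraded to the pointwise statement you need. The paper sidesteps this because the Hadamard product of $\mathcal{P}_M$ pairs each $q_n$ with its complex conjugate $\bar q_n$ rather than with its functional-equation partner $1-q_n$; after dividing by $2s-1$ and taking imaginary parts, the resulting sums over scattering zeros carry a uniform denominator $\eta_n^2+\gamma_n^2$ and are controlled by the convergent series $\sum_n \eta_n/(\eta_n^2+\gamma_n^2)$ (Fischer, Corollary~2.4.17), giving $O(1/t)$ rather than the uncontrolled pointwise quantity your decomposition produces.
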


\vskip .10in
\begin{proof}
Let $\sigma <0$.  By Proposition \ref{LemmaNonVan}, there exists a constant
$t_0'>0$ such that for all $\sigma<0$ and all $ \vert t \vert
>t_0'$ we have $\displaystyle \frac{\left( Z_M H_M\right) ^{\prime }}{(Z_M H_M)}(\sigma +it)\neq
0$. Therefore, it is enough to prove the statement when $0\leq
\sigma < 1/2$. Without loss of generality, we will assume that $t>0$.

\vskip .10in
By taking the logarithmic derivatives of the both
sides of the equation \eqref{prodformula}, we get

\vskip .10in
\begin{align}
\frac{1}{2s-1}&\frac{\left( Z_M H_M\right) ^{\prime }}{\left( Z_M H_M\right) }(s) =-
\frac{1}{2s-1}\frac{\Xi _{I}^{\prime }(s)}{\Xi _{I}(s)}-
\frac{1}{2s-1}\frac{\Xi _{M, \mathrm{ell}}^{\prime }(s)}{\Xi _{M, \mathrm{ell}}(s)} +
\frac{\log \mathfrak{g}_{1}}{2s-1}+
\frac{n_{1}}{2s-1}\frac{\Gamma ^{\prime }}{\Gamma }(s)  \label{Z_M H_Mlogder} \\
& + \frac{n_{1}-\frac{1}{2}\textrm{\rm Tr}\left(I_{n_{1}}-\Phi (\frac{1}{2})\right)}
{2\left( s-1/2\right)^{2}}
-\frac{1}{2s-1}\sum\limits_{m=1}^{N_0}\frac{1}{s-p_{m}}-
\frac{1}{2s-1}
\frac{\left( \Xi_M \mathcal{P}_M\right) ^{\prime}}
{\left( \Xi_M \mathcal{P}_M\right)}(1-s),\notag
\end{align}

\vskip .10in
\noindent
for all $s\in \mathbb{C}$ different from zeros and poles of $Z_M$ and $\phi_M $.
Formula (3.4.1) on page 146 of \cite{Fis87} gives the formula
\begin{align}
\left( \Xi_M \mathcal{P}_M\right) (s) &=e^{Q(s)}\left( s-1/2\right) ^{2d_{1/4}}%
\underset{n\geq 0\text{, }r_{n}\neq 0}{\prod }\left( 1+\frac{(s-1/2)^{2}}{%
r_{n}^{2}}\right) \exp \left( -\frac{(s-1/2)^{2}}{r_{n}^{2}}\right) \notag \\&
\cdot \overset{N1}{\underset{n=1}{\prod }}\left( 1-\frac{s-1/2}{\eta _{n}}%
\right) \exp \left( -\frac{s-1/2}{\eta _{n}}+\frac{\left( s-1/2\right) ^{2}}{%
2\eta _{n}^{2}}\right) \notag  \\
 \underset{n\geq N_1 +1}{\prod }&\left( 1+\frac{s-1/2}{\eta
_{n}+i\gamma _{n}}\right)\left( 1+\frac{s-1/2}{\eta _{n}-i\gamma
_{n}}\right) \exp \left( -\frac{2\eta _{n}\left(
s-\frac{1}{2}\right) }{\eta _{n}^{2}+\gamma
_{n}^{2}}+\left( s-\frac{1}{2}\right) ^{2}\frac{\eta _{n}^{2}-\gamma _{n}^{2}%
}{\left( \eta _{n}^{2}+\gamma _{n}^{2}\right) ^{2}}\right), \notag
\end{align}%

\vskip .10in
\noindent
for all $s\in \mathbb{C}$ and where the notation is as follows:  $\eta_n := \Re (q_n)$;
$\gamma_n := \Im (q_n)$, $d_{1/4}$ is the multiplicity of $\lambda =1/4$ as an eigenvalue; and $Q(s)=a_{2}(s-1/2)^{2}+a_{1}(s-1/2)+a_{0}$ for some constants $a_{i}$, $i=0,1,2$ computed in \cite{Fis87}.
The constants $a_1$ and $a_2$ are defined by formulas (3.4.8) and
(3.4.9) on page 153 of \cite{Fis87}.  For our purposes it is important to know that $a_{1}$ and $a_{2}$ are real.

\vskip .10in
We now compute the logarithmic derivative of $\left( \Xi_M \mathcal{P}_M%
\right) (s)$ and substitute the expression into (\ref{Z_M H_Mlogder}).
After some elementary calculations, having in mind (\ref{XiIdDerivative}) and (\ref%
{XiEllDerivative}) we end up with

\vskip .10in
\begin{align}
\frac{1}{2s-1}\frac{\left( Z_M H_M\right) ^{\prime }}{\left( Z_M H_M\right) }(s)& =\frac{%
\mathrm{vol}(M) }{2\pi }\frac{\Gamma ^{\prime }}{\Gamma }%
(s)+O\left( \frac{\log \left\vert t\right\vert }{\left\vert t\right\vert }%
\right) +\frac{\log \mathfrak{g}_{1}}{2s-1}+\frac{n_{1}}{2s-1}\frac{\Gamma
^{\prime }}{\Gamma }(s)  \label{Z_M H_MderivFinal}
\\& +\frac{n_{1}-\frac{1}{2}\textrm{\rm Tr}\left( I_{n_{1}}-\Phi (\frac{1}{2})\right) }{%
2\left( s-1/2\right) ^{2}}-\frac{1}{2s-1}\underset{m=1}{\overset{N_0}{\sum }}%
\frac{1}{s-p_{m}}+a_{2}-\frac{a_{1}}{2\left( s-\frac{1}{2}\right) }+\frac{%
d_{1/4}}{\left( s-\frac{1}{2}\right) ^{2}}\notag
\\
&+\underset{n\geq 0\text{, }r_{n}\neq 0}{\sum }\left( \frac{1}{\left( s-%
\frac{1}{2}\right) ^{2}+r_{n}^{2}}-\frac{1}{r_{n}^{2}}\right) +\frac{1}{2}%
\underset{n=1}{\overset{N_1}{\sum }}\left( \frac{1}{\eta _{n}^{2}}-\frac{1}{%
\eta _{n}\left( \eta _{n}-s+1/2\right) }\right) \notag\\[3mm]
&+\underset{n\geq N_1 +1}{\sum }\left[ \frac{\eta _{n}^{2}-\gamma _{n}^{2}}{%
\left( \eta _{n}^{2}+\gamma _{n}^{2}\right) ^{2}}+\frac{\gamma _{n}^{2}-\eta
_{n}^{2}+\eta _{n}\left( s-1/2\right) }{\left( \left( \eta _{n}-s+1/2\right)
^{2}+\gamma _{n}^{2}\right) \left( \eta _{n}^{2}+\gamma _{n}^{2}\right) }%
\right].\notag
\end{align}

\vskip .10in
\noindent
Since $(\Gamma'/\Gamma) (\sigma + it) = O(\log \vert \sigma +it
\vert)$, as $t \rightarrow \infty$, from \eqref{Z_M H_MderivFinal}
we get

\vskip .10in
\begin{align}
\frac{1}{2s-1}\frac{\left( Z_M H_M\right) ^{\prime }}{\left( Z_M H_M\right) }(s)&=\frac{%
\mathrm{vol}(M) }{2\pi }\frac{\Gamma ^{\prime }}{\Gamma }%
(s) + \underset{n\geq 0\text{, }r_{n}\neq 0}{\sum }\left( \frac{1}{\left( s-%
\frac{1}{2}\right) ^{2}+r_{n}^{2}}-\frac{1}{r_{n}^{2}}\right)\notag
\\&
+a_2 + \underset{n\geq N_{1}+1}{\sum }\left[ \frac{\eta _{n}^{2}-\gamma _{n}^{2}}{%
\left( \eta _{n}^{2}+\gamma _{n}^{2}\right) ^{2}}+\frac{\gamma _{n}^{2}-\eta
_{n}^{2}+\eta _{n}\left( s-1/2\right) }{\left( \left( \eta _{n}-s+1/2\right)
^{2}+\gamma _{n}^{2}\right) \left( \eta _{n}^{2}+\gamma _{n}^{2}\right) }%
\right] \notag \\&
+ \frac{1}{2}%
\underset{n=1}{\overset{N_{1}}{\sum }}\left( \frac{1}{\eta _{n}^{2}}-\frac{1}{%
\eta _{n}\left( \eta _{n}-s+1/2\right) }\right) +
O\left( \frac{\log \left\vert t\right\vert }{\left\vert t\right\vert }%
\right),\label{prelimsum}
\end{align}

\vskip .10in
\noindent
as $t=\Im (s) \rightarrow \infty$.  We now set $s=\sigma+it$ with $t>0$ and $0\leq\sigma < 1/2$.
By computing the imaginary parts of both sides (\ref{prelimsum}) we get

\vskip .10in
\begin{align}\label{imequation}
\textrm{Im}&\left( \frac{1}{2\sigma-1 +2it}\frac{\left( Z_M
H_M\right) ^{\prime }}{\left( Z_M H_M\right) }(\sigma+it)\right)
=\frac{\mathrm{vol}(M)
}{2\pi }\cdot\left[ \frac{t}{\sigma^{2} + t^{2}} + \sum_{n=1}^{\infty}
\frac{t}{(n+\sigma)^{2} + t^{2}} \right] \notag \\[3mm]&
\hskip .5in + \underset{n\geq 0\text{, }r_{n}\neq 0}{\sum } \frac{t(1/2 - \sigma)}
{\left( (\sigma - 1/2)^{2}-t^{2} +r_n^{2} \right) ^{2} +4t^{2} (\sigma-1/2)^{2}} +
O\left(\frac{1}{t} \right)+ O\left( \frac{%
\log t }{t}\right) \notag \\[3mm]  &
\hskip .5in + \underset{n\geq N_1 +1}{\sum } \frac{ t\eta_n(3\gamma_n^{2}-t^{2})-t\eta_n^{3} +t(1/2 - \sigma)(2\gamma_n^{2}-2\eta_n^{2} -\eta_n (1/2-\sigma))}{\left [\left( \left( \eta _{n}-\sigma+1/2\right)
^{2}+\gamma _{n}^{2} -t^{2}\right)^{2} +4t^{2}(\eta_n+1/2-\sigma)^{2} \right] \left( \eta _{n}^{2}+\gamma _{n}^{2}\right) }.
\end{align}

\vskip .10in
\noindent
Since $0\leq\sigma < 1/2$ we have that $(n+\sigma)^{2} < (n+1/2)^{2}$, for all $n\geq0$. Therefore
\vskip .10in
$$
\frac{t}{\sigma^{2} + t^{2}} + \sum_{n=1}^{\infty} \frac{t}{(n+\sigma)^{2} + t^{2}} > \frac{t}{1/4 + t^{2}} + \sum_{n=1}^{\infty} \frac{t}{(n+1/2)^{2} + t^{2}} =\frac{\pi}{2} \tanh (\pi t).
$$
\vskip .10in
\noindent
Furthermore, since $0< \eta_n < c$, for some positive constant $c$ and all $n\geq 1$ and $\gamma_n \rightarrow \infty$, as $n\rightarrow \infty$, by the choice of $\sigma$ we have that
$$
(1/2 - \sigma) (2\gamma_n^{2}-2\eta_n^{2} -\eta_n (1/2-\sigma)) \geq 2\gamma_n^{2}-2\eta_n^{2} -\eta_n \geq 0
$$
for all but finitely many $n\geq (N_1 +1)$. Let $n_1 \geq (N_1 +1)$ be an integer such that $2\gamma_n^{2}-2\eta_n^{2} -\eta_n \geq 0$ for all $n\geq n_1$. For simplicity, we will introduce the notation
\vskip .10in
$$
D(n, \sigma, t) = \left [\left( \left( \eta _{n}-\sigma+1/2\right)
^{2}+\gamma _{n}^{2} -t^{2}\right)^{2} +4t^{2}(\eta_n+1/2-\sigma)^{2} \right].
$$

\vskip .10in
From \eqref{imequation}, we conclude the existence a constant
$C_1>0$ and a positive number $t_0 >t_0'$ such that for all $t>t_0$,

\begin{align}
& \textrm{Im}\left( \frac{1}{2\sigma-1 +2it}\frac{\left( Z_M H_M\right) ^{\prime
}}{\left( Z_M H_M\right) }(\sigma +it)\right)
\geq \frac{\mathrm{vol}(M)
}{4 }\tanh(\pi t) - C_1 \frac{\log t}{t} \notag
\\[3mm] & \hskip .65in
+\underset{\vert \gamma_n \vert < t/\sqrt{3}}{\sum }\frac{
t\eta_n(3\gamma_n^{2}-t^{2})}{D(n, \sigma, t) \left( \eta _{n}^{2}+\gamma
_{n}^{2}\right) } + \underset{N_1+1 \leq n \leq n_1}{\sum }\frac{
(1/2 - \sigma)t(2\gamma_n^{2}-2\eta_n^{2}-\eta_n)}{D(n, \sigma, t) \left( \eta
_{n}^{2}+\gamma _{n}^{2}\right)}\notag
\\[3mm] & \hskip .65in + \underset{n \geq N+1}{\sum
}\frac{ - t\eta_n^{3}}{D(n, \sigma, t) \left( \eta _{n}^{2}+\gamma
_{n}^{2}\right) }.\label{threeSums}
\end{align}

\vskip .10in
\noindent
Observe that each term in each summand in (\ref{threeSums}) is negative.
We investigate separately each of the three sums on the right hand side of \eqref{threeSums}.

\vskip .10in
Since $(\eta_n -\sigma +1/2)^{2}$ is bounded by some constant, by enlarging $t_0$ if necessary, we get that $D(n,\sigma,t)\geq t^{4}/4$ for all $n$ such that $\vert \gamma_n \vert < t/\sqrt{3}$ and all $t>t_0$. Therefore,

\vskip .10in
\begin{align*}
0  \leq \underset{\vert \gamma_n \vert < t/\sqrt{3}}{\sum }\frac{ t\eta_n(t^{2}-3\gamma_n^{2})}{D(n,\sigma,t) \left( \eta _{n}^{2}+\gamma _{n}^{2}\right) } &\leq \underset{\vert \gamma_n \vert < t/\sqrt{3}}{\sum }\frac{ t^{3}\eta_n}{D(n,\sigma,t) \left( \eta _{n}^{2}+\gamma _{n}^{2}\right)} \\[3mm] &
\leq \frac{2}{t} \underset{\vert \gamma_n \vert < t/\sqrt{3}}{\sum }\frac{ 2\eta_n}{\left( \eta _{n}^{2}+\gamma _{n}^{2}\right)} =O(1/t) ,
\end{align*}

\vskip .01in
\noindent
as $t \rightarrow \infty$ because the series
$$
\underset{n\geq N_1 +1}{\sum }\frac{2\eta _{n}}{\left( \eta _{n}^{2}+\gamma
_{n}^{2}\right) }
$$
converges; see Corollary 2.4.17 of \cite{Fis87}.

\vskip .10in
For the second and the third sum in \eqref{threeSums} we use the elementary inequality
$$
D(n,\sigma,t)\geq 4t^{2}(\eta_n +1/2 -\sigma)^{2} \geq 4t^{2} \eta_n^{2}
$$
to deduce that second sum is $O(1/t)$ as $t \rightarrow \infty$.
For the third sum, we have that
$$
\underset{n \geq N_1 +1}{\sum }\frac{ - t\eta_n^{3}}{D(n,\sigma,t)
\left( \eta _{n}^{2}+\gamma _{n}^{2}\right) } \geq - \frac{1}{8t}
\underset{n\geq N_1 +1}{\sum }\frac{2\eta _{n}}{\left( \eta _{n}^{2}+\gamma_{n}^{2}\right) }
= O(1/t)
$$
as $t \rightarrow \infty$, again by Corollary 2.4.17 of \cite{Fis87}.

\vskip .10in
The three sums in \eqref{threeSums} are $O(1/t)$, as $t \rightarrow \infty$.
Hence, there exists a constant $C_2 >0$, such that for all $t>t_0$, $t\neq r_{n}$
and $0\leq \sigma \leq 1/2$ one has
\[
\textrm{Im}\left( \frac{1}{2\sigma -1 + 2it}\frac{\left( Z_M H_M\right) ^{\prime
}}{\left( Z_M H_M\right) }(\sigma +it)\right)
 \geq \frac{\mathrm{vol}(M) }{4 }\cdot \tanh \pi t - \frac{C_1 \log t + C_2}{t}.
\]%
Since $\tanh \pi t=1+O(e^{-\pi t})$, as $
t\rightarrow \infty $ we conclude that
$$
\Im \left( \frac{\left( Z_M H_M\right) ^{\prime
}}{\left( Z_M H_M\right) } (\sigma +it)\right) \neq 0 \,\,\,\,\,\textrm{\rm for $t>t_0$.}
$$
With all this, the proof of part (a) is complete.

\vskip .10in
We now prove part (b).  We put $s=1/2 +it$ for $t>0$ with $t\neq r_n$ in \eqref{imequation} to get
\begin{multline*}
\textrm{Im}\left( \frac{1}{2it}\frac{\left( Z_M
H_M\right) ^{\prime }}{\left( Z_M H_M\right) }(\frac{1}{2}+it)\right)
=\frac{\mathrm{vol}(M)
}{2\pi }\cdot\frac{\pi}{2} \tanh (\pi t)+ O\left( \frac{%
\log t }{t}\right) + \underset{n\geq N_1 +1}{\sum } \frac{ t\eta_n(3\gamma_n^{2}-t^{2})-t\eta_n^{3}}{D(n,1/2,t) \left( \eta _{n}^{2}+\gamma _{n}^{2}\right) }.
\end{multline*}
Analogously as in the proof of part a) we deduce that there exist a constant $t_1>0$ such that
\[
\textrm{Im}\left( \frac{1}{2it}\frac{\left( Z_M H_M\right) ^{\prime
}}{\left( Z_M H_M\right) }(\frac{1}{2} +it)\right)
 \geq \frac{\mathrm{vol}(M) }{4 }\cdot \tanh \pi t - \frac{C_1 \log t + C_2}{t}.
\]
for all $t>t_1$, $t\neq r_n$ and the proof of part (a) is complete.
\end{proof}

\vskip .10in
We can now give a proof of part (a) of the Main Theorem.

\vskip .10in
The function $(Z_M H_M)(s)$ has finitely many
non-trivial zeros in the region $\Re (s)<1/2$.  Combining this statement with
Proposition \ref{LemmaCritLine}(a) immediately implies there existence a of constant $t_0$
such that $(Z_M H_M)'(\sigma + it) \neq 0$ for $\sigma <1/2$ and $\vert t \vert>t_0$.

\vskip .10in
Proposition \ref{LemmaCritLine}b) yields that $\displaystyle \frac{\left(
Z_M H_M\right) ^{\prime }}{(Z_M H_M)}(1/2 +it)\neq 0$ for $\vert t \vert>t_0$, $t\neq r_n$ for all $n\geq1$.
Therefore, the only zeros of $\left( Z_M H_M \right)^{\prime }$ on the line Re $s=1/2$,
with at most a finite number of exceptions,
are multiple zeros of $\left( Z_M H_M \right) $, or, equivalently, multiple zeros of $Z_M$.

\vskip .20in
\section{Preliminary lemmas}

In this section we will prove some preliminary results needed in the proof of the Main Theorem.
Let quantity $A_M$, resp. $a_M$, be defined by \eqref{definitionofA} and \eqref{definitionofa1}, resp. \eqref{definitionofa1two}. Let $P_{00} \in \mathcal{H}(\Gamma)$ denote
the primitive hyperbolic element of $\Gamma$ with the property that $N(P_{00})=e^{\ell_{M,0}}$;
or equivalently, with the property that $N(P_{00}) = \min \{N(P) : P \in \mathcal{H}(\Gamma) \}$.
In the case when $A_M=e^{\ell_{M,0}}$ we may write $a_M$ in terms of the norm of $P_{00}$, namely $a_{M}=m_{M,0} \Lambda(P_{00})$.

\vskip .10in
The proof of parts b) and c) of our Main Theorem relies essentially on application of Littlewood's theorem
to the function $X_M (s)$ defined for all complex $s$ by

\vskip .10in
\begin{equation} \label{definofX_M}
X_M(s):=\frac{A_M^{s}}{a_{M}}(Z_M H_M)^{\prime }(s).
\end{equation}

\vskip .10in
\begin{lemma} \label{X_MboundInfty} There exist $\sigma _{1} > 1$ and a
constant $0<c_{\Gamma }<1$ such that for $\sigma =\Re(s)\geq \sigma _{1}$,
we have the asymptotic formula
\[
X_M(s)=1+O(c_{\Gamma }^{\sigma })\neq 0%
\text{,}
\]
as $ \sigma \rightarrow +\infty$.
\end{lemma}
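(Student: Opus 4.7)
The plan is to start from the Dirichlet series representation of the logarithmic derivative furnished by Lemma \ref{SeriesRepLogDerZ_M H_M} and to isolate the dominant term, which we will see is exactly $a_M/A_M^s$. Writing
\[
(Z_M H_M)'(s) = (Z_M H_M)(s)\cdot \left( \frac{Z_M'}{Z_M}(s) + \frac{H_M'}{H_M}(s) \right),
\]
I would first observe that from the Euler product \eqref{EulProd} one has $Z_M(s) = 1 + O(e^{-\sigma \ell_{M,0}})$ as $\sigma=\Re(s)\to+\infty$, and from \eqref{Dirichletseriespart} one has $H_M(s) = 1 + O(r_2^{-2\sigma})$, so that $(Z_M H_M)(s) = 1 + O(\tilde c^{\sigma})$ for some $0<\tilde c<1$, provided $\sigma$ is sufficiently large.

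The main point is to extract the lead term in the Dirichlet series
\[
\frac{(Z_M H_M)'}{(Z_M H_M)}(s) = \sum_{P\in\mathcal{H}(\Gamma)}\frac{\Lambda(P)}{N(P)^{s}} + \sum_{i=1}^{\infty}\frac{b(q_i)}{q_i^{s}}.
\]
Grouping together the primitive hyperbolic classes of shortest norm $N(P_{00})=e^{\ell_{M,0}}$ contributes $m_{M,0}\Lambda(P_{00})\,e^{-s\ell_{M,0}}$, and the smallest term of the second series contributes $b(q_1)(\mathfrak{g}_2/\mathfrak{g}_1)^{-2s}$. Consulting the definitions \eqref{definitionofA}, \eqref{definitionofa1}, \eqref{definitionofa1two}, I would check the three cases separately (strict inequality in each direction, and equality) and verify in all three that the combined leading contribution is precisely $a_M A_M^{-s}$, while every remaining term decays like $B^{-s}$ for some $B>A_M$. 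Here the equality case is the subtle one: it is exactly the reason $a_M$ is defined as the \emph{sum} of the two candidate residues, not their average. Denoting the next smallest exponent by $A_M'>A_M$ and setting $c_1=A_M/A_M' < 1$, absolute convergence of both Dirichlet series for $\Re(s)\geq\sigma_1$ (for $\sigma_1$ large) gives
\[
\frac{(Z_M H_M)'}{(Z_M H_M)}(s) = \frac{a_M}{A_M^{s}}\bigl(1 + O(c_1^{\sigma})\bigr),
\qquad \sigma\to+\infty.
\]

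Combining the two estimates yields
\[
(Z_M H_M)'(s) = \bigl(1+O(\tilde c^{\sigma})\bigr)\cdot \frac{a_M}{A_M^{s}}\bigl(1+O(c_1^{\sigma})\bigr) = \frac{a_M}{A_M^{s}}\bigl(1+O(c_\Gamma^{\sigma})\bigr),
\]
with $c_\Gamma = \max\{\tilde c,c_1\}\in(0,1)$. Multiplying by $A_M^{s}/a_M$ gives $X_M(s)=1+O(c_\Gamma^{\sigma})$, and in particular $X_M(s)\neq 0$ once $\sigma\geq\sigma_1$ is chosen large enough that the error term is bounded in absolute value by, say, $1/2$.

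The only genuinely non-routine step is the bookkeeping in the equality case $e^{\ell_{M,0}}=(\mathfrak{g}_2/\mathfrak{g}_1)^2$, where the two a priori separate leading contributions coalesce into a single Dirichlet coefficient at $A_M^{-s}$; everything else is a direct consequence of the absolute convergence already recorded in Lemma \ref{SeriesRepLogDerZ_M H_M} and the uniform decay of $Z_M$ and $H_M$ to $1$ for large $\Re(s)$.
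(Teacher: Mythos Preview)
Your proposal is correct and follows essentially the same route as the paper: factor $(Z_MH_M)'(s)=(Z_MH_M)(s)\cdot\bigl(Z_M'/Z_M+H_M'/H_M\bigr)$, use the Euler product and the Dirichlet series for $H_M$ to see that $(Z_MH_M)(s)=1+O(c^\sigma)$, extract the leading term $a_M A_M^{-s}$ from the combined Dirichlet series for the logarithmic derivative, and multiply. Your explicit remark about the equality case $e^{\ell_{M,0}}=(\mathfrak g_2/\mathfrak g_1)^2$ explaining why $a_M$ is defined as a sum is a nice touch that the paper leaves implicit.
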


\vskip .10in
\begin{proof} From the Euler product definition \eqref{EulProd} of $Z_M$, we have that
$$
Z_M(s)=1+O \left(\frac{1}{N(P_{00})^{\Re (s)}}\right ),
$$
as $\Re (s) \rightarrow \infty$. Analogously, from \eqref{Dirichletseriespart}, we have that
$$
H_M(s)=1+O \left(\frac{1}{r_{2}^{2\Re (s)}} \right)
$$
as $\Re (s) \rightarrow +\infty$. Furthermore,
\begin{gather*}
\underset{\left\{ P\right\}  \in \mathcal{H}(\Gamma)}{\sum }\frac{\Lambda (P)}{
N(P)^{s}}+\sum_{i=1}^{\infty} \frac{b\left( q_{i}\right) }{q_{i}^{s}}
 =\frac{m_{M,0} \Lambda(P_{00})}{N(P_{00})^{s}}\left( 1 + O\left (\frac{1}
 {C_{\Gamma , 1}^{\Re (s)}}\right ) \right) + \frac{b((\mathfrak{g}_2 /\mathfrak{g}_1)^{2})}
 {(\mathfrak{g}_2 /\mathfrak{g}_1)^{2s}}\left( 1 + O\left (\frac{1}
 {C_{\Gamma , 2}^{\Re (s)}}\right ) \right)
\end{gather*}
as $\Re (s) \rightarrow +\infty$, where
$$
C_{\Gamma ,1} := \min \{N(P): N(P)\neq N(P_{00}) \} /N(P_{00}) >1
$$
and
$$
C_{\Gamma ,2} :=
\min \{ q_i : q_i \neq (\mathfrak{g}_2 /\mathfrak{g}_1)^{2}\} /(\mathfrak{g}_2 /\mathfrak{g}_1)^{2} >1
$$
are constants depending on the underlying group $\Gamma$.
By the definition of $A_M$ and $a_M$, we immediately deduce that
$$
\underset{\left\{ P\right\}  \in \mathcal{H}(\Gamma)}{\sum }\frac{\Lambda (P)}{
N(P)^{s}}+\sum_{i=1}^{\infty}\frac{b\left( q_{i}\right) }{q_{i}^{s}} = \frac{a_{M}}{A_M^{s}} \left( 1+O \left(\frac{1}{A_{\Gamma, 1}^{\Re (s) }}\right)\right), \text{  as  } \Re (s) \rightarrow +\infty,
$$
for some constant $A_{\Gamma,1} >1$.
We now use Lemma \ref{SeriesRepLogDerZ_M H_M}, where we multiply
the formula \eqref{logderZ_M H_M} by $(Z_M H_M)(s)$, to get
$$
(Z_M H_M)^{\prime }(s)=\frac{a_{M}}{A_M^{s}} \left[ 1+O \left(\frac{1}{N(P_{00})^{\Re (s)}}\right )\right] \left[1+O \left(\frac{1}{r_{1}^{2\Re (s)}} \right) \right]\left[1+O \left(\frac{1}{A_{\Gamma, 1}^{\Re (s) }}\right)\right],
$$
as $\Re (s) \rightarrow +\infty$. This completes the proof.
\end{proof}

\vskip .10in
The following lemma provides the bound for the growth of the function $Z_{M,1}(s)$; recall that
$Z_{M,1}(s)$ is defined by \eqref{Z1def}.

\vskip .10in
\begin{lemma} \label{Z1growth} Let $0<a<1/2$ be an arbitrary real number and let $\sigma \geq 1-a$. Then
\begin{equation}
\log \left\vert 1+Z_{M, 1}(\sigma \pm iT)\right\vert =O\left( \left\vert
Z_{M,1}(\sigma \pm iT)\right\vert \right) =O\left(T^{2a-1} \log^{2a} T \right) ,
\label{logZ1}
\end{equation}%
as $T\rightarrow \infty $.
\end{lemma}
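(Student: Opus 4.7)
The plan is that the second equality in \eqref{logZ1} is essentially a restatement of a bound already derived during the proof of Proposition~\ref{LemmaNonVan}(b), specialized to $\sigma_{1}=1-a$; the first equality then follows from the smallness of $Z_{M,1}$ together with the Taylor expansion of $\log(1+z)$ at $z=0$.

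To establish the second equality, I would return to the explicit formula \eqref{Z1def} for $-f_{M}(s)Z_{M,1}(s)$ and dominate each of its four ingredients on the vertical line $\Re(s)=\sigma\geq 1-a$ as $T=|\Im(s)|\to\infty$. The quotient of cosines in the elliptic sum decays exponentially in $T$, since $0<\theta<\pi$ forces $|2\theta-\pi|<\pi$; so that sum is $O(e^{-cT})$ for some $c>0$. Stirling's formula shows that the linear combination of four digamma values equals $\log((1/2+s)(3/2-s)(1-s)/(1/2-s))+O(1/T)=O(\log T)$, and the constant $-2n_{1}\log 2$ is trivially $O(1)$. The dominant contribution is $Z_{M}'/Z_{M}(s)$: applying the second alternative in \eqref{BoundDer} with $k=0$, whose effective argument on $\Re(s)=1-a$ is $1/2-a\in(0,1/2)$, yields
$$\frac{Z_{M}'}{Z_{M}}(\sigma\pm iT)=O\bigl(T^{2a}\log^{2a}T\bigr).$$
Since $|f_{M}(\sigma\pm iT)|\sim\textrm{vol}(M)\,T$ as $T\to\infty$ (because $\tan(\pi(1/2-s))\to\mp i$ for $\Im s\to\pm\infty$, while $|1/2-s|\sim T$), dividing by $|f_{M}|$ gives
$$|Z_{M,1}(\sigma\pm iT)|=O\bigl(T^{2a-1}\log^{2a}T\bigr),$$
the three subdominant ingredients contributing only $O(T^{-1}\log T)$, which is absorbed. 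Uniformity in $\sigma\geq 1-a$ is automatic: the bound in \eqref{BoundDer} is monotone in the effective argument on $\sigma\in[1-a,1]$, and for $\sigma\geq 1$ the Dirichlet series for $Z_{M}'/Z_{M}$ converges absolutely and is $O(1)$, while $|f_{M}|$ only grows.

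Because $2a-1<0$, the estimate just established forces $|Z_{M,1}(\sigma\pm iT)|\to 0$ uniformly for $\sigma\geq 1-a$, so $|Z_{M,1}|\leq 1/2$ for all sufficiently large $T$. For such $T$ the elementary inequality
$$|\log(1+z)|\leq\frac{|z|}{1-|z|}\leq 2|z|,\qquad |z|\leq 1/2,$$
combined with $\log|1+z|=\Re\log(1+z)$, gives $|\log|1+Z_{M,1}(\sigma\pm iT)||\leq 2|Z_{M,1}(\sigma\pm iT)|$, which is exactly the first equality in \eqref{logZ1}.

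No genuine obstacle is expected. The only step requiring a touch of care is verifying that, on the strip $\Re(s)\geq 1-a$, the minimum in \eqref{BoundDer} is realized by the second alternative: this holds because for $0<a<1/2$ one has $T^{2a}\log^{2a}T\ll T/\log T$, which is precisely the same observation used in the proof of Proposition~\ref{LemmaNonVan}(b). Everything else is routine manipulation of the explicit formula for $Z_{M,1}$.
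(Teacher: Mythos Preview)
Your proposal is correct and follows essentially the same approach as the paper's proof: bound $Z_M'/Z_M$ via \eqref{BoundDer} with $k=0$, control the remaining terms in \eqref{Z1def} by Stirling's formula and the exponential decay of the elliptic quotient, and divide by $|f_M|\sim\mathrm{vol}(M)\,T$; the paper simply phrases this as ``argue in the same manner as in the proof of Proposition~\ref{LemmaNonVan}(b).'' Your only superfluous remark is the check that the second alternative in \eqref{BoundDer} realizes the minimum --- since the bound is $O(\min\{A,B\})\subseteq O(B)$, you may use the second alternative regardless.
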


\vskip .10in
\begin{proof} From the bound \eqref{BoundDer} with $k=0$ and $\sigma \geq 1-a$, we get
\[
\frac{Z_M^{\prime }}{Z_M}(\sigma \pm iT)=O\left( (T \log T) ^{1-2(\sigma - 1/2)}\right) =  O\left( (T \log T) ^{2a}\right) \text{, as }
T\rightarrow \infty,
\]
where the implied constant depends only upon $M$ and $a$. We then can argue in the same manner
as in the proof of Proposition \ref{LemmaNonVan}b.  Namely, we apply
Stirling's formula and the above estimate, we get, for $s=\sigma \pm iT$ and $T \geq 1$, the estimate

\vskip .10in
$$
\left\vert \frac{1}{f_M (s)}\left( \frac{\eta_M ^{\prime }}{\eta_M }(s)-
\frac{K_M^{\prime }}{K_M}%
(1-s)-\frac{Z_M^{\prime }}{Z_M}(s)\right) -1\right\vert =O\left( \frac{\log T}{T}+
\frac{ (T \log T)^{2a}}{T}\right),
$$

\vskip .10in
\noindent
as $T\rightarrow \infty$.   This implies the bound (\ref{logZ1}) as claimed.
\end{proof}

\vskip .10in
The following lemma is a Phragm\'{e}n-Lindel\"{o}f type bound for $\left(
Z_M H_M\right) $. The bound will be used to derive a similar bound for $\left(
Z_M H_M\right) ^{\prime }$ using the Cauchy formula.

\vskip .10in
\begin{lemma} \label{PraghLindBound} Let $\sigma _{2}\geq 1$ be a fixed real number,
such that $-\sigma
_{2} $ is not a pole of $\left( Z_M H_M\right) $. Then, for an arbitrary $\delta
>0$

\noindent
a)
\[
\left( Z_M H_M\right) (\sigma +it)=O_{\Gamma }\left( \exp \left( \frac{1}{2}%
+\sigma _{2}+\delta \right) \mathrm{vol} (M) t\right) \text{,}
\]
b)
\[
Z_M(\sigma +it)=O_{\Gamma }\left( \exp \left( \frac{1}{2}+\sigma _{2}\right)
\mathrm{vol} (M) t\right)
\]
for $t\geq 1$, uniformly in $\sigma \leq -\sigma _{2}$.
\end{lemma}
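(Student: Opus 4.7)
The plan is to use the functional equation \eqref{functeqMain}, rewritten as
$(Z_M H_M)(s) = \eta_M(s) K_M(s)^{-1} Z_M(1-s)$
and $Z_M(s) = (Z_M H_M)(s)/H_M(s)$,
to transfer the estimation into the right half-plane, where $Z_M$ and $H_M$ admit absolutely convergent Euler and Dirichlet series. This reduces the problem to bounding the three factors $Z_M(1-s)$, $K_M(s)^{-1}$, and $\eta_M(s)$ separately (with $H_M(s)^{-1}$ added for part (b)).

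The easy factors come first. For $\sigma \leq -\sigma_2 \leq -1$, we have $\Re(1-s) \geq 1+\sigma_2$, so the Euler product \eqref{EulProd} gives
$Z_M(1-s) = 1 + O_\Gamma(N(P_{00})^{-\sigma_2}) = O_\Gamma(1)$,
uniformly in $\sigma$. Stirling's formula applied to $K_M(s) = \pi^{n_1/2}(\Gamma(s-1/2)/\Gamma(s))^{n_1} e^{c_1 s + c_2}$ yields $|K_M(\sigma+it)|^{-1} = O_\Gamma(|t|^{n_1/2} e^{-c_1 \sigma})$ for $|t| \geq 1$.

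The main step is estimating $\eta_M(\sigma + it)$. Using $\eta_M(s) = \eta_M(1/2)\exp\bigl(\int_{1/2}^s (\eta_M'/\eta_M)(u)\,du\bigr)$ together with \eqref{DefEtaLogDer}, I would integrate along the piecewise path $1/2 \to 1/2 + it \to \sigma + it$. The identity contribution $\mathrm{vol}(M)(u-1/2)\tan(\pi(u-1/2))$ dominates: for $|\Im u|$ large, $\tan(\pi(u-1/2)) \to i\,\mathrm{sgn}(\Im u)$, so the real part of the integrand on the horizontal leg behaves like $-\mathrm{vol}(M) t + O(1)$. The elliptic terms involve ratios $\cos((2\theta-\pi)(u-1/2))/\cos(\pi(u-1/2))$ with $0<\theta<\pi$, which decay exponentially in $|\Im u|$ and integrate to $O(1)$; the parabolic $\Gamma'/\Gamma$ terms contribute only $O(\log|t|)$ by Stirling. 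Since $(u-1/2)\tan(\pi(u-1/2))$ is real on the critical line, the vertical leg contributes a purely imaginary quantity. The net result is
\[
|\eta_M(\sigma+it)| = O_\Gamma\bigl(\exp(\mathrm{vol}(M)\,t\,(1/2-\sigma))\bigr).
\]
Combining with the bounds on $K_M^{-1}$ and $Z_M(1-s)$ yields part (a); the polynomial factor $|t|^{n_1/2}$ is absorbed into the exponential via the arbitrary $\delta > 0$.

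For part (b), write $Z_M = (Z_M H_M)/H_M$ and exploit $\phi_M(s)\phi_M(1-s) = 1$ together with $\phi_M = K_M H_M$ to obtain $H_M(s)^{-1} = K_M(s) K_M(1-s) H_M(1-s)$. Since $H_M(1-s) = 1 + O_\Gamma(r_2^{-2\sigma_2})$ is bounded, and the reflection formula yields $\Gamma(s-1/2)\Gamma(1/2-s)/(\Gamma(s)\Gamma(1-s)) = \tan(\pi s)/(s-1/2)$, one gets $|K_M(s) K_M(1-s)| \asymp_\Gamma |t|^{-n_1}$ and hence $|H_M(\sigma+it)| \asymp_\Gamma |t|^{n_1}$. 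This exactly cancels the $|t|^{n_1/2}$ overhead from part (a) and provides a further $|t|^{-n_1/2}$ decay, so no $\delta$ is required in part (b). The main obstacle is careful bookkeeping of the elliptic and parabolic pieces of $\eta_M'/\eta_M$ along the integration path—ensuring that they contribute only $O(\log|t|)$ uniformly and do not spoil the clean exponential bound coming from the identity term, while also choosing the path to stay away from the poles of $\tan(\pi(u-1/2))$ and the shifted $\Gamma'/\Gamma$ factors.
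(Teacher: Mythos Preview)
Your estimates of the three factors $Z_M(1-s)$, $K_M(s)^{-1}$, and $\eta_M(s)$ on the vertical line $\Re(s)=-\sigma_2$ are correct and are exactly the computations the paper carries out (your integration of $\eta_M'/\eta_M$ along a two-leg path reproduces the paper's formula~\eqref{etabound}). What you are missing is the Phragm\'en--Lindel\"of step, which is the actual content of the lemma.

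Your direct bound reads $|(Z_MH_M)(\sigma+it)|\ll |t|^{n_1/2}\exp\bigl(\mathrm{vol}(M)\,t\,(1/2-\sigma)\bigr)$, and the exponent $(1/2-\sigma)$ depends on $\sigma$. Taken literally on the stated range $\sigma\le-\sigma_2$, this blows up as $\sigma\to-\infty$ and is not uniform. More to the point, the proof in the paper and the downstream applications (Proposition~\ref{Lindel} via Lemma~\ref{LindelofBound}) make clear that the intended range is the half-plane $\sigma\ge-\sigma_2$; there your functional-equation approach breaks down in the critical strip, since $Z_M(1-s)$ is only controlled by the Euler product when $\Re(1-s)>1$, i.e.\ $\sigma<0$. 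You therefore have no bound in $0\le\sigma\le 1$.

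The paper closes this gap as follows. One sets $F(s)=(Z_MH_M)(s)\exp\bigl[(1/2+\sigma_2+\delta)\mathrm{vol}(M)\,is\bigr]$, an entire function of order at most two, and applies Phragm\'en--Lindel\"of in the sector $\{\pi/4\le\arg(s+\sigma_2)\le\pi/2\}$. Along the $45^\circ$ ray $\Re(s)\to\infty$, $Z_MH_M$ is bounded and $|F|=O(1)$; along the vertical edge $\Re(s)=-\sigma_2$, your estimate gives $|F(-\sigma_2+it)|=o(1)$ (this is where the $\delta$ is needed, to absorb the $|t|^{n_1/2}$ and $\log t$ terms). Phragm\'en--Lindel\"of then yields $F=O(1)$ throughout the sector, which is the claimed uniform bound. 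Part~(b) is handled the same way with $G(s)=Z_M(s)\exp[(1/2+\sigma_2)\mathrm{vol}(M)\,is]$; the extra $|t|^{-n_1/2}$ you correctly identified in $|K_M(1-s)|$ is what makes the $\delta$ unnecessary there. Your computations supply the boundary input; you still need the interpolation.
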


\vskip .10in
\begin{proof} To prove part (a), we will
apply the Phragmen-Lindel\"{o}f theorem (e.g. Chapter 5.61.of \cite{Ti})
to the function

\vskip .10in
$$
F(s)=\left( Z_M H_M\right) (s)\exp %
\left[ \textrm{vol} (M)\left( \frac{1}{2}+\sigma _{2}+\delta \right)is\right]
$$

\vskip .10in
\noindent
which is an entire function of finite order at most two
in the sector $D:=\left\{ \pi /4\leq \arg (s+\sigma _{2})\leq \pi /2\right\}
$. Obviously, $\left( Z_M H_M\right) (s)=O(1)$ along the line $\arg (s+\sigma
_{2})=\pi /4$, since $\left( Z_M H_M\right) (\sigma + it)=O(1)$, for $\sigma >\sigma _{1}$ and $t \geq 1$; see
the proof of Lemma \ref{X_MboundInfty}. Therefore,

\vskip .10in
$$
\left\vert F(s)\right\vert =O\left(1\right) \,\,\,\,\,\textrm{\rm along the line $\arg (s+\sigma _{2})=\pi /4$.}
$$

\vskip .10in
\noindent
To determine the
behavior of the function $F(s)$ along the vertical line $\arg (s+\sigma
_{2})=\pi /2$, i.e. for $s=-\sigma _{2}+it$, $t\geq 0$, we use the
functional equation \eqref{functeqMain} to get
\begin{eqnarray} \notag
\left\vert F(-\sigma _{2}+it)\right\vert &=&\exp (-\left(
\frac{1}{2}+\sigma _{2} + \delta \right) \mathrm{vol}(M) t)\left\vert \eta
(-\sigma _{2}+it)\right\vert \left\vert K_M^{-1}\left( -\sigma
_{2}+it\right)
\right\vert \left\vert Z_M(1+\sigma _{2}-it)\right\vert  \\ \label{definF}
&=&\exp (-\left( \frac{1}{2}+\sigma _{2} + \delta \right) \mathrm{vol}(M) t)\left\vert \eta_M (-\sigma _{2}+it)\right\vert \left\vert
K_M^{-1}\left( -\sigma _{2}+it\right) \right\vert \cdot O(1)\text{,}
\end{eqnarray}
since $1+\sigma _{2}\geq 2$. It remains to estimate $\left\vert \eta_M
(-\sigma _{2}+it)\right\vert $. Firstly, by \eqref{DefEtaLogDer}
\begin{equation} \label{EtaEstimate}
\left\vert \eta_M (-\sigma _{2}+it)\right\vert
=\exp \left( \textrm{Re}\underset{%
1/2}{\overset{-\sigma _{2}+it}{\int }}\frac{\eta_M ^{\prime
}}{\eta_M } (u)du\right) = \exp \left(
\textrm{Re}\underset{1/2}{\overset{-\sigma _{2}+it}{\int }}
\mathrm{vol}(M) (u)\tan \pi u\cdot du\right)\cdot
\end{equation}%
\[
\cdot \exp \left( 2n_{1}\log 2\cdot \left( -\sigma _{2}-1/2\right)
\right) \cdot \exp \left( n_{1}\textrm{Re}\left(
\underset{1/2}{\overset{-\sigma
_{2}+it}{\int }}\left( \frac{\Gamma ^{\prime }}{\Gamma }\left( \frac{1}{2}%
+u\right) +\frac{\Gamma ^{\prime }}{\Gamma }\left( \frac{3}{2}-u\right)
\right) du\right) \right)
\]%
\begin{equation}
\cdot \exp \left( \textrm{Re}\left( -\underset{0<\theta (R)<\pi }{\underset{%
\left\{ R\right\} }{\sum }}\frac{\pi }{M_{R}\sin \theta }\underset{1/2}{%
\overset{-\sigma _{2}+it}{\int }}\frac{\cos (2\theta -\pi )(s-1/2)}{\cos \pi
(s-1/2)}ds\right) \right) .  \nonumber
\end{equation}

\vskip .10in
\noindent
Formula (4.4) from page 76, of \cite{Hej76}, following the notation from the previous
page, states the estimate

\vskip .10in
\begin{multline} \label{term1}
\exp \left( \textrm{Re}\underset{0}{\overset{-\sigma _{2}-1/2+it}{\int }}%
\mathrm{vol}(M) (u-1/2)\tan \pi (u-1/2)du\right) \\
=\exp \left( \Re \left( \frac{i}{2}\mathrm{vol}(M) \left( it-\sigma
_{2}-1/2\right) ^{2} \right)+ O(1)\right) =\exp \left( \mathrm{vol}(M) \left( \frac{1}{2}+\sigma _{2}\right) t+O(1)\right)
\end{multline}%

\vskip .10in
\noindent
as $t \rightarrow \infty$.  By Stirling's formula,
\begin{equation}\label{term2}
\textrm{Re}\left( \underset{1/2}{\overset{-\sigma _{2}+it}{\int }}\left( \frac{%
\Gamma ^{\prime }}{\Gamma }\left( \frac{1}{2}+u\right) +\frac{\Gamma
^{\prime }}{\Gamma }\left( \frac{3}{2}-u\right) \right) du\right) = O(\log (t))
\,\,\,\,\,\textrm{as $t \rightarrow \infty$.}
\end{equation}

The contribution of the elliptic elements is given by

\begin{equation*}
\textrm{Re}\underset{1/2}{\overset{-\sigma _{2}+it}{\int
}}\frac{\cos (2\theta -\pi )(s-1/2)}{\cos \pi (s-1/2)}ds=
\textrm{Re}\left( \underset{1/2}{\overset{-\sigma _{2}}{\int
}}\frac{\cos (2\theta -\pi )(\sigma -1/2+it)}{\cos \pi (\sigma
-1/2+it)}d\sigma \right) .
\end{equation*}

\vskip .10in
\noindent
Trivially, one has that
$$
\left\vert \frac{\cos (2\theta -\pi )(\sigma -1/2+it)}{\cos \pi (\sigma
-1/2+it)}\right\vert  = O\left( \frac{\exp \left( \left\vert 2\theta
-\pi \right\vert t\right) }{\exp (\pi t)}\right)
\,\,\,\,\,\textrm{as $t\rightarrow +\infty $.}
$$

\vskip .10in
\noindent
Since
$0<\theta <\pi $, $\left\vert 2\theta -\pi \right\vert t-\pi t<0$, hence

\vskip .10in
\begin{equation} \label{term3}
\textrm{Re}\left( -\underset{0<\theta (R)<\pi }{\underset{%
\left\{ R\right\} }{\sum }}\frac{\pi }{M_{R}\sin \theta }\underset{1/2}{%
\overset{-\sigma _{2}+it}{\int }}\frac{\cos (2\theta -\pi )(s-1/2)}{\cos \pi
(s-1/2)}ds\right) =O(1) \,\,\,\,\,\textrm{as $t\rightarrow +\infty $.}
\end{equation}

\vskip .10in
\noindent
Substituting \eqref{term1}, \eqref{term2} and \eqref{term3} into (\ref{EtaEstimate}) we get
\begin{equation} \label{etabound}
\left\vert \eta_M (-\sigma _{2}+it)\right\vert =O\left( \exp \left(
\mathrm{vol}(M) \left( \frac{1}{2}+\sigma _{2}\right)
t+O(1)\right) \right) \,\,\,\,\,\textrm{as $t\rightarrow +\infty $.}
\end{equation}

\vskip .10in
\noindent
Formula (6.1.45) from \cite{AbSt64}, which itself is an application of
Stirling's formula, yields
$$
\left( \frac{%
\Gamma (-\sigma _{2}+it)}{\Gamma (-\sigma _{2}-1/2+it)}\right) ^{n_{1}} = O\left( t ^{n_1/2}\right)
\,\,\,\,\,\textrm{as $t\rightarrow +\infty $.}
$$

\vskip .10in
\noindent
Therefore,
\begin{equation} \label{Kbound}
\left\vert K_M^{-1}\left( -\sigma _{2}+it\right) \right\vert =O(\exp \left(
\frac{n_{1}}{2}\log t\right) )\,\,\,\,\,\textrm{as $t\rightarrow +\infty $.}
\end{equation}

\vskip .10in
\noindent
Substituting the bound (\ref{Kbound}) together with \eqref{etabound} into \eqref{definF} we get
\begin{align*}
\left\vert F(-\sigma _{2}+it)\right\vert = & O
\left( \exp \left( \mathrm{vol}(M) \left( \frac{1}{2}+\sigma _{2}\right) t+\frac{n_{1}}{2%
}\log t+O(\log(t))\right)\right) \\ & \hskip .15in \cdot O\left(\exp \left( -\mathrm{vol}(M) \left(
\frac{1}{2}+\sigma _{2}+\delta \right) t\right)\right)  \\
= & o(1)\,\,\,\,\,\textrm{as $t\rightarrow +\infty $.}
\end{align*}

\vskip .10in
One now can apply the Phragmen-Lindel\"{o}f theorem, which implies that $F(s)=O(1)$
in the sector $D:=\left\{ \pi /4\leq \arg (s+\sigma _{2})\leq \pi /2\right\}$ and the proof of a) is complete.

\vskip .10in
To prove (b), we repeat the proof given above for the function

\vskip .10in
$$
G(s)=Z_M(s)\exp \left[
\left( \frac{1}{2}+\sigma _{2}\right) \mathrm{vol}(M) is\right],
$$

\vskip .10in
\noindent
which is an entire function of finite order in the sector $D:=\left\{ \pi /4\leq
\arg (s+\sigma _{2})\leq \pi /2\right\} $. Obviously, $\left| G(s)\right|
=O\left( 1\right) $ along the line $\arg (s+\sigma _{2})=\pi /4$. To
determine the behavior of the function $G(s)$ along the vertical line $\arg
(s+\sigma _{2})=\pi /2$, i.e. for $s=-\sigma _{2}+it$, $t\geq 0$, we use the
functional equation for the zeta function $Z_M$ and get

\vskip .10in
\begin{eqnarray*}
\left| G(-\sigma _{2}+it)\right| &=&\exp (-\left( \frac{1}{2}+\sigma
_{2}\right) \mathrm{vol}(M) t)\left| \eta_M (-\sigma _{2}+it)\right|
\left| \phi_M \left( 1+\sigma _{2}-it\right) \right| \left| Z_M(1+\sigma
_{2}-it)\right| \\
&=&\exp (-\left( \frac{1}{2}+\sigma _{2}\right) \mathrm{vol}(M)
t)\left| \eta_M (-\sigma _{2}+it)\right| \left| K_M\left( 1+\sigma
_{2}-it\right) \right| \cdot O(1)\,\,\,\,\,\textrm{as $t\rightarrow +\infty $,}
\end{eqnarray*}

\vskip .10in
\noindent
since $1+\sigma _{2}\geq 2$ and the Dirichlet series for $H_M$ and $Z_M$
converge absolutely for Re $s\geq 2$ (see, e.g. page 160 of \cite{Iw02}). The estimate for $\left| \eta_M
(-\sigma _{2}+it)\right| $ is given by formula \eqref{etabound}. Analogously as above, we see that
\[
\left| K_M\left( 1+\sigma _{2}-it\right) \right| =O(\exp \left( -\frac{n_{1}}{2%
}\log t\right) )\,\,\,\,\,\textrm{as $t\rightarrow +\infty $.}
\]
The proof is completed using the argument in the proof of part (a).
\end{proof}

\vskip .10in
The following lemma is a Lindel\"{o}f type bound for the function $Z_M H_M$ which will be
used to deduce a sharper bound for the function $\arg X_M (\sigma + iT)$, when $\sigma$ is close to $1/2$.

\vskip .10in
\begin{lemma} \label{LindelofBound} For $\epsilon >0$ and $t\geq 1$
\[
\left( Z_M H_M\right) \left( \frac{1}{2}+it\right) =O_{\epsilon }\left( \exp
\left( \epsilon t\right) \right)\,\,\,\,\,\textrm{as $t\rightarrow +\infty $.}
\]
\end{lemma}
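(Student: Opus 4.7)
The plan is to deduce the bound by applying Proposition \ref{Lindel} to $f(s) = (Z_M H_M)(s)$ with the choice $P(t) = e^t$. Under that choice the conclusion of the proposition reads $(Z_M H_M)(1/2+it) = O_\epsilon(e^{\epsilon t})$, which is precisely what we want. So the work consists of checking the hypotheses of the proposition.

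First I would set up the analytic preliminaries. Since $(Z_M H_M)$ is holomorphic on $\mathbb{C}\setminus(-\infty,1/2]$, it is meromorphic on $\mathbb{C}$ and holomorphic on $|\Im s|\geq t_0$ for every $t_0>0$. I would then fix $\sigma_0>1$ as in Lemma \ref{X_MboundInfty} and a small $\omega>0$, so that for $\sigma_0-\omega\leq \Re(s)\leq \sigma_0+\omega$ the Euler product for $Z_M$ and the Dirichlet series for $H_M$ converge absolutely and uniformly. On this vertical strip one then has $|(Z_M H_M)(s)|\geq c>0$ (by the computation in the proof of Lemma \ref{X_MboundInfty}) and $(Z_M H_M)'/(Z_M H_M)(s) = O(1)$ by Lemma \ref{SeriesRepLogDerZ_M H_M}, so in particular this log-derivative is $o(e^t) = o(P(t))$. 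The non-vanishing of $(Z_M H_M)$ for $\Re(s)\geq \sigma_0+\omega$ is automatic from the same absolute convergence.

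The main obstacle is verifying the uniform exponential upper bound $(Z_M H_M)(s) = O(e^{Dt})$ for some fixed $D>0$, uniformly in $\Re(s)\geq 2-3\sigma_0$. This has to be assembled in three pieces: for $\Re(s)\leq -\sigma_2$ with $\sigma_2$ chosen larger than $3\sigma_0-2$, Lemma \ref{PraghLindBound}(a) already provides the bound with $D=(1/2+\sigma_2+\delta)\mathrm{vol}(M)$; for $\Re(s)\geq \sigma_0+\omega$ the product/series gives $(Z_M H_M)(s)=O(1)$; and in the intermediate vertical strip $-\sigma_2\leq \Re(s)\leq \sigma_0+\omega$ I would apply the classical Phragm\'en--Lindel\"of theorem (e.g.\ \cite{Ti}, Chapter 5.61) to $(Z_M H_M)(s)e^{iDs}$, using the bounds on the two vertical edges and the fact that $(Z_M H_M)$ is of finite order on every closed vertical strip. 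This yields the desired uniform bound with the same $D$.

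Finally, the zero-density hypothesis $N(\sigma,Z_M H_M,T+1)-N(\sigma,Z_M H_M,T)=o(P(T))=o(e^T)$ for $\sigma\geq 1/2$ is essentially free: the non-trivial zeros of $Z_M H_M$ all lie on the critical line (up to finitely many exceptions near the real axis), and by the Weyl law discussed in Section 2 one has $N_{\textrm{ver}}(T;Z_M H_M)=O(T^2)$, so the number of zeros in any strip of height $1$ grows like $O(T)$, which is manifestly $o(e^T)$. With all hypotheses of Proposition \ref{Lindel} checked, the conclusion yields $(Z_M H_M)(1/2+it)=O_\epsilon(e^{\epsilon t})$ as $t\to\infty$, completing the proof.
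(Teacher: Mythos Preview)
Your overall strategy---applying Proposition~\ref{Lindel} directly to $f=Z_M H_M$ with $P(t)=e^t$---is valid, but it differs from the paper's route and contains one incorrect assertion that you should fix.

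The paper does \emph{not} apply Proposition~\ref{Lindel} to $Z_M H_M$. Instead, it first writes
\[
\bigl|(Z_M H_M)(\tfrac12+it)\bigr|=\bigl|Z_M(\tfrac12+it)\bigr|\cdot\bigl|\phi_M(\tfrac12+it)\bigr|\cdot\bigl|K_M^{-1}(\tfrac12+it)\bigr|,
\]
uses $|\phi_M(\tfrac12+it)|=1$ and $|K_M^{-1}(\tfrac12+it)|=O(t^{n_1/2})$, and then applies Proposition~\ref{Lindel} to $Z_M$ alone. The payoff of this reduction is that $Z_M$ has \emph{no} zeros in $\Re(s)>1/2$ apart from finitely many real ones, so the zero-density hypothesis $N(\sigma,f,T{+}1)-N(\sigma,f,T)=o(P(T))$ is vacuous. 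The polynomial growth condition then comes directly from Lemma~\ref{PraghLindBound}(b) with $\sigma_2=3\sigma_0-2$, without any additional Phragm\'en--Lindel\"of interpolation.

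Your approach still works, but your justification of the zero-density hypothesis is wrong as stated: the claim that ``the non-trivial zeros of $Z_M H_M$ all lie on the critical line (up to finitely many exceptions)'' is false. The factor $H_M$ contributes infinitely many zeros off the critical line---these are precisely the zeros $q_n=\eta_n+i\gamma_n$ of $\phi_M$ with $\Re(q_n)>1/2$ that appear throughout Section~3 and in Fischer's product for $\mathcal{P}_M$. What saves you is that the required bound $o(e^T)$ is extremely weak: the total number of zeros of $Z_M H_M$ up to height $T$ is $O(T^2)$ (combine the Weyl law for the $Z_M$-zeros with the fact that $N_{M,\mathrm{con}}(T)\leq \mathrm{vol}(M)T^2/4\pi+O(T\log T)$ controls the $\phi_M$-zeros via Proposition~\ref{WeylNew}), so the count in any unit horizontal strip is at most $O(T^2)=o(e^T)$. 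Replace your false sentence with this argument and your proof goes through; the paper's version is simply cleaner because the reduction to $Z_M$ sidesteps the issue entirely.
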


\vskip .10in
\begin{proof} Let us write

\vskip .10in
$$
\left| \left( Z_M H_M\right) \left( \frac{1}{2}+it\right) \right| =\left| Z_M(\frac{%
1}{2}+it)\right| \left| \phi_M (\frac{1}{2}+it)\right| \left| K_M^{-1}(\frac{1}{2%
}+it)\right|.
$$

\vskip .10in
\noindent
Since

\vskip .10in
$$
\left| \phi_M (\frac{1}{2}+it)\right| =1
\,\,\,\,\,
\textrm{\rm and}
\,\,\,\,\,
\left| K_M^{-1}(\frac{1}{2}%
+it)\right| =O(\exp (\frac{n_{1}}{2}\log t))
\,\,\,\,\,\textrm{\rm as $t \rightarrow +\infty$,}
$$

\vskip .10in
\noindent
it is enough to prove that

\vskip .10in
$$
Z_M\left( \frac{1}{2}+it\right) =O_{\epsilon }\left( \exp \left( \epsilon
t\right) \right) \,\,\,\,\,\textrm{\rm as $t \rightarrow +\infty$,}
$$

We apply Proposition \ref{Lindel}. In the notation of Proposition \ref{Lindel} we take
$$
f(s)=Z_{M}(s)\,\,\,\,\,\textrm{with}\,\,\,\,\,
\sigma_{0}=\sigma _{1}+\omega> \sigma_1\,\,\,\,\,\textrm{and}\,\,\,\,\,
P(t)=2\exp (t),
$$

\vskip .10in
\noindent
where $\sigma_1$ is defined in Lemma \ref{X_MboundInfty}.  Let us
verify that all assumptions of Proposition \ref{Lindel} are fulfilled.

\vskip .10in
The function $Z_{M}$ is meromorphic function of finite order, with poles at points on the real line;
see page 498 of \cite{Hej83}.  Hence $Z_{M}(s)$ is holomorphic function for
$\vert \Im (s) \vert \geq t_0>0$, for any $t_0 >0$.

\vskip .10in
From the proof of Lemma \ref{X_MboundInfty} it is obvious that $\left| Z_{M}(s)\right| \geq
c>0$ and
$$
Z_{M}'/Z_{M}(s)=O(1) \,\,\,\,\,\textrm{\rm as $t \rightarrow \infty$,}
$$

\vskip .10in
\noindent
for $s=\sigma +it$ and with $\sigma _{0}-\omega \leq \sigma \leq \sigma _{0}+\omega $.
Furthermore, $\left| Z_{M}(s)\right| >0$ for $\textrm{\rm Re}(s)>\sigma _{0}+\omega $.

\vskip .10in
From Lemma \ref{PraghLindBound}b), we have that
\[
Z_{M}(\sigma +it)=O_{\Gamma }\left( \exp \left( \frac{1}{2}+3\sigma
_{0}-2\right) \mathrm{vol}(M) t\right) =O_{\Gamma }\left(
P(t)^{D}\right) \text{,}
\]%
for a fixed $D =\left( 3\sigma _{0}-3/2\right) \mathrm{vol} (M) $, uniformly in $\sigma \geq 2-3\sigma _{0}$.

\vskip .10in
Theorem 5.3 from page 498 of \cite{Hej83} asserts that
$Z_{M}$ has no zeros in the half-plane $\textrm{\rm Re}(s) > 1/2$.  Therefore, the Lindel\"of condition
on the vertical distribution of zeros of $Z_{M}(s)$ in the half-plane $\textrm{\rm Re}(s) > 1/2$,
as required in Proposition \ref{Lindel}, is trivially fulfilled.

\vskip .10in
Therefore, all the assumptions of Proposition \ref{Lindel} are satisfied,
hence $Z_{M}(1/2 +it) =O_{\epsilon} \left(\exp(\epsilon t)\right)$ as $t \rightarrow \infty$.
\end{proof}

\vskip .10in
\begin{lemma} \label{derivboundArg} For an arbitrary $\epsilon>0$, $t \geq 1$ and
$\sigma_2$ defined in Lemma \ref{PraghLindBound} we have
\[
\left( Z_M H_M\right) ^{\prime }(\sigma +it)=\left\{
\begin{array}{ll}
O\left( \exp \epsilon t\right) &\,\,\,\,\,\text{for} \,\,\,\,\, \frac{1}{2}\leq
\sigma \leq \sigma _{0} \\[3mm]
O\left( \exp (1/2-\sigma +\epsilon)t\right) &\,\,\,\,\,\text{for} \,\,\,\,\,-\sigma _{2}\leq \sigma
< 1/2,%
\end{array}%
\right.
\]
as $t \rightarrow \infty$.
\end{lemma}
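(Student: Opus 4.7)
Both bounds will be obtained by combining the Phragm\'en--Lindel\"of convexity principle with Cauchy's integral formula. The strategy is first to bound $(Z_M H_M)$ itself on the relevant strip and then to pass to the derivative via a contour integral.

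For the range $\tfrac{1}{2}\le \sigma\le \sigma_0$, the Lindel\"of--type bound of Lemma \ref{LindelofBound} supplies $(Z_M H_M)(\tfrac{1}{2}+it)=O_{\epsilon}(\exp(\epsilon t))$, while on the vertical line $\Re(s)=\sigma_1$ (with $\sigma_1$ as in Lemma \ref{X_MboundInfty}, chosen at least as large as $\sigma_0$) the function $(Z_M H_M)(s)$ is uniformly bounded by $O(1)$, by absolute convergence of the Euler product for $Z_M$ and of the Dirichlet series for $H_M$. Phragm\'en--Lindel\"of in the strip $\tfrac{1}{2}\le \Re(s)\le \sigma_1$ (see Chapter 5.65 of \cite{Ti}) then gives $(Z_M H_M)(\sigma+it)=O_{\epsilon}(\exp(\epsilon t))$ uniformly, and Cauchy's formula on a small fixed-radius circle around $s=\sigma+it$ transfers the bound to $(Z_M H_M)'(\sigma+it)$, absorbing the change of constants into a new $\epsilon$.

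For the range $-\sigma_2\le \sigma<\tfrac{1}{2}$, the plan is to apply Phragm\'en--Lindel\"of directly to $(Z_M H_M)'$ on the strip $-\sigma_2\le \Re(s)\le \tfrac{1}{2}$. The right-hand endpoint bound $(Z_M H_M)'(\tfrac{1}{2}+it)=O(\exp(\epsilon t))$ is supplied by the previous step. At the left endpoint $\sigma=-\sigma_2$, we invoke the functional equation of Lemma \ref{lemmaFuctEq},
\[
(Z_M H_M)'(s)=f_M(s)\,\eta_M(s)\,K_M^{-1}(s)\,\widetilde Z_M(1-s)\,Z_M(1-s),
\]
and estimate factor-by-factor: $f_M$ is polynomial in $t$; the estimate $|\eta_M(-\sigma_2+it)|=O(\exp(\mathrm{vol}(M)(\tfrac{1}{2}+\sigma_2)t))$ is exactly the one carried out in the proof of Lemma \ref{PraghLindBound}; $K_M^{-1}(-\sigma_2+it)$ is polynomial by Stirling; $\widetilde Z_M(1+\sigma_2-it)=1+o(1)$ by Lemma \ref{Z1growth}; and $Z_M(1+\sigma_2-it)=O(1)$ by absolute convergence, once $\sigma_2$ is taken large enough that $1+\sigma_2\ge \sigma_1$. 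The resulting endpoint bound matches the linear interpolating rate in the exponent $(\tfrac{1}{2}-\sigma)t$, and the finite-order growth of $(Z_M H_M)'$ in the strip, which follows from the Hadamard factorization implicit in Lemma \ref{ProdViaZ_M H_M} together with the known growth of the identity, elliptic, and parabolic factors, justifies the Phragm\'en--Lindel\"of step and yields $(Z_M H_M)'(\sigma+it)=O(\exp((\tfrac{1}{2}-\sigma+\epsilon)t))$.

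The main technical subtlety lies in the Cauchy step near the critical line: for $\sigma$ very close to $\tfrac{1}{2}$, a fixed-radius disk around $s=\sigma+it$ dips slightly into the half-plane $\Re(s)<\tfrac{1}{2}$, where the Phragm\'en--Lindel\"of bound of the first step is not directly available. Since $(Z_M H_M)$ is holomorphic on $\mathbb{C}\setminus(-\infty,\tfrac{1}{2}]$, this causes no analyticity issue for $|t|\ge 1$; what is required is merely a preliminary bound on $(Z_M H_M)$ in a small left neighborhood of the critical line. This is provided by an auxiliary Phragm\'en--Lindel\"of pass on the wider strip $-\sigma_2\le \Re(s)\le \sigma_1$, with endpoint data from Lemma \ref{PraghLindBound}(a) on the left and from Lemma \ref{X_MboundInfty} on the right; the resulting (coarser) estimate in the thin slab $\tfrac{1}{2}-\delta\le \Re(s)\le \tfrac{1}{2}$ is more than enough to run Cauchy's formula and complete the proof.
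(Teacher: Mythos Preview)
Your proposal is correct and follows essentially the same route as the paper: Phragm\'en--Lindel\"of on $(Z_M H_M)$ in the right-hand strip followed by Cauchy's formula for the derivative, and the functional equation plus Phragm\'en--Lindel\"of applied to $(Z_M H_M)'$ on the left-hand strip. The only organizational difference is that the paper handles the ``disk dipping below $\Re(s)=1/2$'' issue not by a separate coarse auxiliary pass but by first running Phragm\'en--Lindel\"of on $(Z_M H_M)$ across the strip $-\sigma_2\le \Re(s)\le 1/2$ (with Lemma~\ref{PraghLindBound}(a) on the left and Lemma~\ref{LindelofBound} on the right), which already furnishes the needed bound $(Z_M H_M)(\sigma+it)=O(\exp((1/2-\sigma+\epsilon)t))$ for use in the Cauchy step.
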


\vskip .10in
\begin{proof}
The proof involves an application of the
Phragmen-Lindel\"{o}f theorem to the open sector bounded by the lines
$$
\text{\rm Re}(s)=-\sigma _{2} \,\,\,\,\,\textrm{\rm and} \,\,\,\,\,\textrm{\rm Re}(s)=\frac{1}{2}
$$

\vskip .10in
\noindent
and $\text{\rm Im}(s)=1$. The bounds to be used come from Lemma \ref{PraghLindBound},
with $\delta = \epsilon$, and from Lemma \ref{LindelofBound}.  A direct application of
the Phragmen-Lindel\"{o}f theorem yields the bound

\vskip .10in
\begin{equation} \label{B1}
(Z_M H_M)(\sigma +it)=O\left( \exp (1/2 - \sigma +\epsilon)t\right),
\end{equation}

\vskip .10in
\noindent
for $t\geq 1$ and $-\sigma _{2}\leq \sigma \leq 1/2$. Similarly,
for $\sigma_0$ defined as in Lemma \ref{PraghLindBound}, one can apply the
Phragmen-Lindel\"{o}f theorem in the open sector bounded by the
lines $\text{\rm Re}(s)=\sigma _{0}$, $\text{\rm Re}(s)=\frac{1}{2}$ and $\text{\rm Im}(s)=1$, from
which one gets

\vskip .10in
\begin{equation} \label{B2}
Z_M H_M(\sigma +it)=O\left( \exp \epsilon t\right),
\end{equation}

\vskip .10in
\noindent
for $1/2\leq \sigma \leq \sigma _{0}$. The Cauchy integral
formula can be applied, from which we have the equation
$$
(Z_M H_M)^{\prime }(s)=\frac{1}{2\pi i}\underset{C}{\int }\frac{\left( Z_M H_M\right)
(z)}{\left( z-s\right) ^{2}}dz
$$
where $C$ is a circle of a small, fixed radius $r<\epsilon$, centered at $s = \sigma + it$. Applying \eqref{B2}  to $(Z_M H_M)(z)$, when $1/2 \leq \Re (z) \leq \sigma_0$ and \eqref{B1} when $\Re (z) < 1/2$, we deduce that
$$
(Z_M H_M)^{\prime }(\sigma + it) = O\left(\exp ((r + \epsilon)t)/r\right) = O\left(\exp (2 \epsilon t)\right)
$$

\vskip .10in
\noindent
for $1/2 \leq \sigma \leq \sigma_0$ and $t\geq 1$. This proves the first part of the Lemma when
replacing $\varepsilon$ by $\varepsilon/2$.

\vskip .10in
In the case when $\sigma < 1/2$, we can use the functional equation for $(Z_M H_M)'$ to arrive at the expression

\vskip .10in
\begin{eqnarray*}
\left\vert \left( Z_M H_M\right) ^{\prime }(-\sigma _{2}+it)\right\vert
&=&\left\vert \eta_M (-\sigma _{2}+it)\right\vert \left\vert K_M^{-1}(-\sigma
_{2}+it)\right\vert \left\vert Z_M(1+\sigma _{2}-it)\right\vert \cdot \\[3mm]
&&\cdot \left\vert \frac{\eta_M ^{\prime }}{\eta_M }(-\sigma _{2}+it)-\frac{%
K_M^{\prime }}{K_M}(-\sigma _{2}+it)-\frac{Z_M^{\prime }}{Z_M}(1+\sigma
_{2}-it)\right\vert.
\end{eqnarray*}

\vskip .10in
\noindent
Since $\sigma_2 \geq 1$, we have $Z_M '/Z_M (1+\sigma_2 - it) =O(1),$ as $t \rightarrow + \infty$. Elementary computations involving the definition of the function $\eta_M' / \eta_M$ and the Stirling formula imply that
\[
\frac{\eta_M ^{\prime }}{\eta_M }(-\sigma _{2}+it)-\frac{K_M^{\prime }}{K_M}(-\sigma
_{2}+it)-\frac{Z^{\prime }}{Z}(1+\sigma _{2}-it)=O(t)\,\,\,\,\,\textrm{\rm as $t \rightarrow \infty$;}
\]%
in brief, one sees the asymptotic bound by observing that the leading term in the above expression
is $ \mathrm{vol} (M) (1/2 + \sigma_2 -it) \tan (\pi (1/2 +\sigma_2 -it))$.
From the bounds \eqref{etabound} and \eqref{Kbound} obtained in the proof of Lemma \ref{PraghLindBound}, we arrive
at the bound

\vskip .10in
\[
\left\vert \left( Z_M H_M\right) ^{\prime }(-\sigma _{2}+it)\right\vert
=O\left( \exp \left( \left( \frac{1}{2}+\sigma _{2}+\epsilon \right)
\mathrm{vol}(M) t\right) \right)\,\,\,\,\,\text{\rm as $t\rightarrow \infty$.}
\]

\vskip .10in
\noindent
The bound claimed in the statement of the Lemma follows by
applying the Phragmen-Lindel\"{o}f theorem to the function $(Z_M H_M)'$ in the
open sector bounded by the lines $\Im (s)=1$, $\Re (s)= -\sigma_2$ and $\Re (s)=1/2$,
keeping in mind that $-\sigma_2 \leq \sigma < 1/2$.
\end{proof}

\vskip .20in
\section{Vertical and horizontal distribution of zeros}

\vskip .10in
In this section we will prove parts b) and c) of the Main Theorem.

\vskip .10in
We fix a large positive number $T$ and choose number $T'$ such that
$\vert T'-T\vert = O(1)$ independently of $T$ where no zero of $Z_{M}H_{M}$ has imaginary
part equal to $T'$.   Let $t_{0}>0$ be a number such that $(Z_M H_M)' / (Z_M H_M) (\sigma + it) \neq 0$ for all $\sigma < 1/2$ and $\vert t \vert < t_0$; the existence of such $t_0$ is established by Proposition \ref{LemmaCritLine}.  Let $\sigma_0 \geq 1$ be a constant chosen so that $\sigma_0 \geq \max\{ \sigma_0' , \sigma_1\}$, where $\sigma_0'$ is defined in Lemma \ref{SeriesRepLogDerZ_M H_M} and $\sigma_1$ is defined in Lemma \ref{X_MboundInfty}.  Let $0<a<1/2$ be arbitrary.

\vskip .10in
The function $X_{M}(s)$,  which was defined in \eqref{definofX_M}, is holomorphic
in the rectangle $R(a,T^{\prime })$ with vertices $a+it_{0}$, $\sigma _{0}+it_{0}$, $\sigma _{0}+iT^{\prime }$
and $a+iT^{\prime }$.  As in \cite{Luo05}, we will use Littlewood's theorem, as stated in section 2.8,
from which we get the formula

\vskip .10in
\begin{multline} \label{mainsum}
2 \pi \underset{t_{0}<\gamma <T^{\prime }, \text{  } \beta'
>a}{\underset{\rho ^{\prime }=\beta
^{\prime }+i\gamma }{\sum }}\left( \beta ^{\prime }-a\right) =\underset{t_{0}%
}{\overset{T^{\prime }}{\int }}\log \left\vert X_M(a+it)\right\vert dt-%
\underset{t_{0}}{\overset{T^{\prime }}{\int }}\log \left\vert X_M(\sigma
_{0}+it)\right\vert dt \\
-\underset{a}{\overset{\sigma _{0}}{\int }}\arg X_M(\sigma +it_{0})d\sigma +%
\underset{a}{\overset{\sigma _{0}}{\int }}\arg X_M(\sigma +iT^{\prime
})d\sigma =I_{1}-I_{2}-I_{3}+I_{4}.
\end{multline}

\vskip .10in
\noindent
The variable $\rho '$ denotes a zero of $(Z_M H_M)'$, and
the integrals $I_{1}$, $I_{2}$, $I_{3}$ and $I_{4}$ are defined to be the four
integrals in (\ref{mainsum}), in obvious notation.  By Proposition \ref{LemmaCritLine},
the condition that $\Im (\rho') >t_0$ implies that $\Re (\rho') \geq 1/2$, hence the sum on the left hand side
of \eqref{mainsum} is actually taken over all zeros of $(Z_M H_M)'$ with imaginary part in the interval
$(t_0, T')$.

\vskip .10in
We investigate integrals $I_1$, $I_2$, $I_3$ and $I_4$ separately.

\vskip .10in
Obviously, $I_{3}=O(1)$ as $T \rightarrow \infty$ since, in fact, $I_{3}$ is independent of $T$.  As
for $I_{2}$, we will follow the argument from page 1144 of \cite{Luo05}.  Let us write
$$
I_{2} = \underset{t_{0}}{\overset{T^{\prime }}{\int }}\log X_M(\sigma_{0}+it) dt
+ \underset{t_{0}}{\overset{T^{\prime }}{\int }}\arg X_M(\sigma_{0}+it) dt.
$$
The function $X_{M}(s)$ is holomorphic and non-vanishing in a half-plane which contains the line of integration,
so
$$
\underset{t_{0}}{\overset{T^{\prime }}{\int }}\arg X_M(\sigma_{0}+it) dt = O(1)
\,\,\,\,\,\textrm{\rm as $T \rightarrow \infty$.}
$$
By Cauchy's theorem,
$$
\underset{t_{0}}{\overset{T^{\prime }}{\int }}\log X_M(\sigma_{0}+it) dt = \int\limits_{\sigma_{0}}^{\infty}
\log X_{M}(\sigma_{0}+iT')d\sigma - \int\limits_{\sigma_{0}}^{\infty}\log X_{M}(\sigma + it_{0})d\sigma,
$$
which is bounded in $T'$ since the function $ \log X_M$ is holomorphic and bounded in the infinite strip $\{s \in \mathbb{C} : t_0 \leq \Im (s) \leq T' , \Re (s)\geq \sigma_0\}$.

\vskip .10in
It remains to evaluate $I_1$ and $I_4$.

\vskip .10in
\subsection{Evaluation of $I_1$}

\vskip .10in
We shall break apart further $I_{1}$ by using the functional equation \eqref{DerivFunctEq}
 for $\left( Z_M H_M\right) ^{\prime }$, the definition \eqref{definofX_M} of $X_M$,
and representation of $\widetilde{Z}_M (s)= 1+Z_{M,1}(s)$ which was used in
the proof of Proposition \ref{LemmaNonVan}b).  By doing so, we arrive at the expression

\vskip .10in
\begin{multline*}
I_{1}=-\underset{t_{0}}{\overset{T^{\prime }}{\int }}\log \left\vert
a_{M}A_M^{-(a+it)}\right\vert dt+\underset{t_{0}}{\overset{T^{\prime }}{\int }}%
\log \left\vert f_M (a+it) \eta_M (a+it) K_M ^{-1}
(a+it)\right\vert dt\\
+\underset{t_{0}}{\overset{T^{\prime }}{\int }}\log \left\vert Z_M\left( 1-(a+it)\right)
\right\vert dt+ \underset{t_{0}}{\overset{T^{\prime }}{\int }}\log
\left\vert 1+Z_{M,1}\left( 1-(a+it)\right) \right\vert
dt=I_{11}+I_{12}+I_{13}+I_{14},
\end{multline*}

\vskip .10in
\noindent
with the obvious notation for the integrals $I_{11}$, $I_{12}$, $I_{13}$ and $I_{14}$.
Clearly, we have that

\vskip .10in
\begin{equation}
I_{11}=-T\left( \log \left\vert a_{M}\right\vert -a\log A_M\right) +O(1)
\,\,\,\,\,\textrm{\rm as $T \rightarrow \infty$.}
\label{I11}
\end{equation}%

\vskip .10in
\noindent
From the computations on the bottom of page 1146 of \cite{Luo05}, we have that

\vskip.10in
\begin{equation} \label{Intf_m}
\underset{t_{0}}{\overset{T^{\prime }}{\int }}%
\log \left\vert f_M (a+it) \right\vert dt = T \log T + T\left(
\log \mathrm{vol}(M) -1\right) +O(\log T)
\,\,\,\,\,\textrm{\rm as $T \rightarrow \infty$.}
\end{equation}

\vskip .10in
\noindent
Therefore,

\vskip .10in
\begin{align} \label{I12begining}
I_{12}&=T\log T+T\left( \log \mathrm{vol}(M) -1\right) +O(\log
T) \\ \notag &+\underset{t_{0}}{\overset{T^{\prime }}{\int }}\log \left\vert
\eta_M (a+it)\right\vert dt
+\underset{t_{0}}{\overset{T^{\prime }}{\int }}\log \left\vert
K_M^{-1}(a+it)\right\vert dt\\ \notag &=T\log T+T\left( \log \mathrm{vol}(M) -1\right) +I_{121}+I_{122}+O(\log T)
\,\,\,\,\,\textrm{\rm as $T \rightarrow \infty$,}
\end{align}

\vskip .10in
\noindent
with obvious notation for $I_{121}$ and $I_{122}$.  Stirling's formula implies that
\[
\left\vert K_M^{-1}(a+it)\right\vert =\pi ^{-\frac{n_{1}}{2}}\exp
\left( -c_{1}a-\textrm{\rm Re}(c_{2})\right) \exp \left [n_{1}\left(
\frac{1}{2}\log \left\vert a-1/2+it\right\vert +O\left(
\frac{1}{t}\right) \right) \right] \left( 1+O(\frac{1}{t^{2}})\right),
\]
as $t \rightarrow \infty$, where $c_{1}$ and $c_{2}$ are constants
defined in (\ref{constants_c1_c2}).  Therefore,

\vskip .10in
\begin{align} \label{I122}
I_{122} &=\underset{t_{0}}{\overset{T^{\prime }}{\int }}\log
\left\vert K_M^{-1}(a+it)\right\vert dt=\left(
-c_{1}a-\textrm{Re}(c_{2})-\frac{n_{1}}{2}\log
\pi \right) T\notag \\&+\frac{n_{1}}{2}\underset{t_{0}}{\overset{T^{\prime }}{\int }}%
\log \left\vert a-1/2+it\right\vert \notag
+O(\log T) \\&=\frac{n_{1}}{2}T\log T-T\left( c_{1}a+\textrm{Re}c_{2}+\frac{%
n_{1}}{2}\log \pi +\frac{n_{1}}{2}\right) +O(\log T)
\,\,\,\,\,\textrm{\rm as $T \rightarrow \infty$.}
\end{align}

\vskip .10in
\noindent
Finally,
\begin{align} \label{I121}
I_{121}&=\underset{t_{0}}{\overset{T^{\prime }}{\int
}}\textrm{Re}\left(
\underset{1/2}{\overset{a+it}{\int }}\frac{\eta_M ^{\prime }}{\eta_M }%
(u)du\right) dt=\underset{t_{0}}{\overset{T^{\prime }}{\int }}\textrm{Re}
\left( \underset{1/2}{\overset{a+it}{\int }}\textrm{vol}(M) (u-1/2)\tan \pi (u-1/2)du\right) dt \notag \\&
\notag +2n_{1}\log 2\left( a-1/2\right) \left( T^{\prime }-t_{0}\right) +n_{1}%
\underset{t_{0}}{\overset{T^{\prime }}{\int }}\textrm{Re}\left( \underset{1/2}{%
\overset{a+it}{\int }}\left( \frac{\Gamma ^{\prime }}{\Gamma }(%
\frac{1}{2}+u)+\frac{\Gamma ^{\prime }}{\Gamma }(\frac{3}{2}-u)\right)
du\right) dt \notag \\&
+\textrm{\rm Re}\left( -\underset{0<\theta (R)<\pi }{\underset{\left\{ R\right\} }{%
\sum }}\frac{\pi }{M_{R}\sin \theta }\underset{1/2}{\overset{a+it}{%
\int }}\frac{\cos (2\theta -\pi )(s-1/2)}{\cos \pi (s-1/2)}ds\right)\notag \\ &
=\left( \frac{1}{2}-a\right) \frac{\mathrm{vol}(M) }{2}%
T^{2}+2n_{1}\log 2\left( a-1/2\right) T+O(\log T)
\,\,\,\,\,\textrm{\rm as $T \rightarrow \infty$.}
\end{align}

\vskip .10in
\noindent
As in the proof of Lemma \ref{PraghLindBound}, one can use
Stirling's formula to obtain the bound

\vskip .10in
$$
\underset{t_{0}}{\overset{T^{\prime }}{\int }}\textrm{Re}\left( \underset{1/2}{%
\overset{a+it}{\int }}\left( \frac{\Gamma ^{\prime }}{\Gamma }(%
\frac{1}{2}+u)+\frac{\Gamma ^{\prime }}{\Gamma }(\frac{3}{2}-u)\right)
du\right) dt=O(\log T)\,\,\,\,\,\textrm{\rm as $T \rightarrow \infty$.}
$$

\vskip .10in
\noindent
Elementary computations yield the same bound for the elliptic contribution.
By substituting \eqref{I121} and \eqref{I122} into \eqref{I12begining}, we arrive
at the bound

\vskip .10in
\begin{align}
I_{12} &= \left( \frac{1}{2}-a\right) \frac{\mathrm{vol}(M)}{2}T^{2}+
\left( \frac{n_{1}}{2}+1\right) T\log T+O(\log T)
\label{I12} \\
&+ T\left[ 2n_{1}\log 2\left( a-1/2\right) -c_{1}a+\log \mathrm{vol}(M) -1-\textrm{Re}(c_{2})-\frac{n_{1}}{2}\log \pi
-\frac{n_{1}}{2}\right] \,\,\,\,\,\textrm{\rm as $T \rightarrow \infty$.} \notag
\end{align}

\vskip .10in
\noindent
With all this, we arrive at the desired bound for $I_{12}$.

\vskip .10in
The integral $I_{13}$ is estimated by applying the Cauchy's
theorem to the function $\log Z_M(s)$ within in the rectangle with
vertices $1-a-iT^{\prime }$, $2-iT^{\prime }$, $2-it_{0}$ and $1-a-it_{0}$.
As in \cite{Luo05}, it easily is shown that

\vskip .10in
$$
I_{13}=-\underset{1-a}{\overset{2}{\int }}\arg Z_M(\sigma -iT^{\prime
})d\sigma +O(1)=O\left(\underset{1-a\leq \sigma \leq 2}{\max }\left\vert \log
Z_M(\sigma -iT^{\prime })\right\vert \right).
$$

\vskip .10in
\noindent
From
$$
\log Z_M(\sigma -iT^{\prime })=\log Z_M(2-iT^{\prime })-\underset{\sigma
-iT^{\prime }}{\overset{2-iT^{\prime }}{\int }}\frac{Z_M^{\prime }}{Z_M}(\xi
)d\xi,
$$

\vskip .10in
\noindent
and the bound in \eqref{BoundDer}, which we write as

\vskip .10in
$$
\frac{Z_M^{\prime }}{Z_M}(\alpha \pm iT^{\prime })=O((T \log T) ^{2-2\alpha })\,\,\,\,\,
\textrm{\rm for $1-a \leq \alpha < 1/2$,}
$$

\vskip .10in
\noindent
we obtain the expression

\vskip .10in
\begin{equation}
I_{13}=\underset{t_{0}}{\overset{T^{\prime }}{\int }} \log \vert Z_M
(1-a-it) \vert dt = O((T \log T)^{2-2(1-a)})=O((T \log T)^{2a})
\,\,\,\,\,\textrm{\rm as $T \rightarrow \infty$.}  \label{I13}
\end{equation}

\vskip .10in
\noindent
Directly from Lemma \ref{Z1growth}, we have the estimate
\begin{equation}\label{I14}
I_{14}=\underset{t_{0}}{\overset{T^{\prime }}{\int }}\log \left\vert
1+Z_{M,1}\left( 1-(a+it)\right) \right\vert dt=O\left( (T \log T)^{2a}\right)
\,\,\,\,\,\textrm{\rm as $T \rightarrow \infty$.}
\end{equation}

\vskip .10in
\noindent
Combining (\ref{I11}), (\ref{I12}), (\ref{I13}) and (\ref{I14})
yields

\vskip .10in
\begin{equation} \label{I1}
I_{1} =\left( \frac{1}{2}-a\right) \frac{\mathrm{vol}(M)
}{2}T^{2}+\left( \frac{n_{1}}{2}+1\right) T\log T+O((T \log T)^{2a})
+TC_{M,a} \,\,\,\,\,\textrm{\rm as $T \rightarrow \infty$,}
\end{equation}

\vskip .10in
\noindent
where
$$
C_{M,a} = \left( a-1/2\right) \cdot 2n_{1}\log 2+a(\log A_M-c_{1})-\log
\left\vert a_{M}\right\vert +\log \mathrm{vol}(M) -1-\textrm{%
Re}(c_{2})-\frac{n_{1}}{2}\log \pi -\frac{n_{1}}{2}.
$$

\vskip .10in
Finally, we have arrived at our estimate for $I_{1}$.

\vskip .20in
\subsection{Evaluation of $I_4$}

\vskip .10in
The evaluation of $I_4$ closely follows the lines of the proof
that the analogous integral in the compact case considered by Garunk\v{s}tis in \cite{Gar08}.
The new input being our Lemma \ref{derivboundArg}.

\vskip.10in
It is sufficient to prove that

\vskip .10in
\begin{equation} \label{argXbound}
\arg X(\sigma +iT^{\prime })=o(T)
\,\,\,\,\,\textrm{\rm for $a\leq \sigma \leq \sigma_0$ and as $T \rightarrow \infty$.}
\end{equation}

\vskip .10in
\noindent
We first will show that \eqref{argXbound} holds when $1/2-\delta \leq \sigma \leq \sigma_0$ for some small
$\delta$.  The argument is similar to the proof of the formula (3.4) in \cite{Gar08}. However, in order to keep
the exposition self-contained, we will repeat main steps of the proof.

\vskip .10in
Let $\delta >0$ be a small constant to be chosen later and let $N_{\delta}$ denote the number of zeros of
$\textrm{\rm Re} (X_M(\sigma + iT'))$ with $1/2 -\delta \leq \sigma \leq \sigma_0 $.  Then $\vert \arg X_M (\sigma + iT')\vert \leq \pi (N_{\delta}+1)$. In order to estimate $N_{\delta}$, consider the function

\vskip .10in
$$
g(z):= 1/2 (X_M(z+iT') + X_M(z-iT')).
$$

\vskip .10in
\noindent
Let $n(r, \sigma_0)$ denotes the number of zeros of $g(z)$ inside the circle $\vert z- \sigma\vert \leq r$.
Observe that

\vskip .10in
$$
g(\sigma) = \Re (X_M (\sigma + iT'))
\,\,\,\,\,\textrm{\rm and}\,\,\,\,\,
\vert \arg X_M (\sigma + iT')\vert \leq \pi ( n (\sigma_0 -1/2 + \delta, \sigma_0)+1).
$$

\vskip .10in
\noindent
By applying Jensen's theorem to $g$, we obtain the equation

\vskip .10in
\begin{equation} \label{Jensen}
\int \limits _{0}^{R} \frac{n(r)}{r} dr = \frac{1}{2 \pi} \int \limits _{0} ^{2 \pi} \log \vert g(\sigma_0 +Re^{i \theta}) \vert d \theta - \log \vert g(\sigma_0) \vert.
\end{equation}

\vskip .10in
\noindent
For large enough $T$ and $R=\sigma_0 -1/2 +2\delta$, Lemma \ref{derivboundArg} implies the bound

\vskip .10in
$$
\log\vert g (\sigma_0 +re^{i \theta})\vert < \left\{
\begin{array}{ll}
 \epsilon T, & \hbox{for } \Re (\sigma_0 + R e^{i \theta}) \geq \frac{1}{2} \\[3mm]
  (2\delta + \epsilon)T, & \hbox{for }  \frac{1}{2} - 2\delta \leq
  \Re (\sigma_0 + R e^{i \theta}) \leq \frac{1}{2}.
  \end{array}
  \right.
$$

\vskip .10in
\noindent
The circle $\vert z- \sigma_0 \vert=R$ has a very small part to the left of the line $\Re (s)=1/2$
namely a circular arc of length $O(\delta^{1/2})$.
Therefore, the right hand side of \eqref{Jensen} is $O(\epsilon T) + O(\delta^{1/2} (2\delta + \epsilon) T)$.
Hence,

\vskip .10in
$$
n(R-\delta)\leq \frac{R}{\delta} \int \limits _{0}^{R} \frac{n(r)}{r} dr = O\left( \left( \frac{\epsilon}{\delta} + \frac{\delta + \epsilon}{\sqrt{\delta}}\right )T\right).
$$

\vskip .10in
For $a\leq \sigma <1/2-\delta$ we use Proposition
\ref{LemmaCritLine} to deduce that $\Re (X(\sigma+iT^{\prime }))
\neq 0$. In particular, we have that $\arg X(\sigma +iT^{\prime })=O(1)$ when
$a\leq \sigma <1/2-\delta$ since $\arg $
is bounded by $\pi $ times the number of zeros.

\vskip .10in
Let us now take $\delta = \epsilon^{2/3}$.  We then get the bound

\vskip .10in
$$
\vert \arg X_M (\sigma + iT')\vert \leq \pi ( O(\epsilon^{1/3} T)+1)+ O(1) =o(T)
\,\,\,\,\,\textrm{\rm as $T \rightarrow \infty$,}
$$

\vskip .10in
\noindent
for all $a \leq \sigma \leq\sigma_0$.  Since $\epsilon >0$ is arbitrary, we conclude
that $I_{4}=o(T)$ as $T \rightarrow \infty$.

\vskip .20in
\subsection{Proof of the Main Theorem}

\vskip .10in
Since $0<a<1/2$, we have that $(T \log T)^{2a} =o(T)$.  We have shown that $I_2$ and $I_3$ are $O(1)$ as
$T \rightarrow \infty$ and that $I_4=o(T)$ as $T \rightarrow \infty$.   Hence, by substituting equation
\eqref{I1} into \eqref{mainsum} we get

\vskip .10in
\begin{equation} \label{finaleq}
2\pi \underset{t_{0}<\gamma <T^{\prime }}{\underset{\rho ^{\prime }=\beta
^{\prime }+i\gamma }{\sum }}\left( \beta ^{\prime }-a\right) =\left( \frac{1%
}{2}-a\right) \frac{\mathrm{vol}(M) }{2}T^{2}+\left( \frac{n_{1}}{2%
}+1\right) T\log T+TC_{M,a}+o(T)
\,\,\,\,\,\textrm{\rm as $T \rightarrow \infty$,}
\end{equation}

\vskip .10in
\noindent
where

$$
C_{M,a} = \left( a-1/2\right) \cdot 2n_{1}\log 2+a(\log A_M-c_{1})-\log \left|
a_{M}\right| +\log \mathrm{vol}(M) -1-\textrm{Re}(c_{2})-\frac{n_{1}}{2%
}\log \pi -\frac{n_{1}}{2}.
$$

\vskip .10in
Substituting $a/2$ instead of $a$ into \ref{finaleq}, subtracting
the obtained formulas, and then dividing by $a/2$
yields the statement b) of the Main Theorem.

\vskip .10in
As for part (c) of the Main Theorem, we begin with the formula

\vskip .10in
\begin{equation} \label{sums_for_c)}
\underset{0<\gamma \leq T}{\underset{\rho ^{\prime }=\beta
^{\prime
}+i\gamma }{\sum }}\left( \beta ^{\prime }-1/2\right) =\underset{%
0<\gamma <T^{\prime }}{\underset{\rho ^{\prime }=\beta ^{\prime
}+i\gamma }{\sum }}\left( \beta ^{\prime }-a\right) +\left(
a-1/2\right) \underset{0<\gamma <T^{\prime }}{\underset{\rho
^{\prime }=\beta ^{\prime }+i\gamma }{\sum }}1\text{.}
\end{equation}

\vskip .10in
\noindent
The first sum on the right hand side of (\ref{sums_for_c)}) is estimated by (\ref{finaleq}).
The second sum on the right hand side of (\ref{sums_for_c)}) is estimated by part b)
of the Main Theorem, keeping mind that the difference between the second sum in (\ref{sums_for_c)}
and the sum in part b) is the finite number of zeros in the half-plane $\textrm{\rm Re}(s) < 1/2$.

\vskip .10in
With all this, the proof of the Main Theorem is complete.

\vskip .10in
In the case when the surface is co-compact the statement of the Main Theorem is easily deduced,
since, in that case $n_1=c_1=c_2=0$, $H_M =1$, $A_{M} = \exp(\ell_{M,0})$ and

\vskip .10in
$$
\frac{\eta_M ^{\prime }}{\eta_M }(s) = \mathrm{vol} (M)
(s-1/2)\tan (\pi (s-1/2))- \pi \underset{0<\theta (R)<\pi
}{\underset{\left\{
R\right\} }{\sum }}\frac{1}{M_{R}\sin \theta }\frac{\cos (2 \theta -\pi)(s-1/2)}{\cos \pi (s-1/2)}.
$$

\vskip .20in
\section{Corollaries of the Main Theorem}

\vskip .10in
In this section we deduce three corollaries of our Main Theorem.
The results we prove are analogous to Theorem 2 and
Theorem 3 in \cite{LM74}, with, in their notation, $k=1$.
Similar results may be deduced for the horizontal distribution of
zeros of the $k$th derivative, based on the results of Section 8, with
suitably replaced constants.

\vskip .10in
\begin{corollary} \label{corCloseTo1/2} For $\delta > 1/2$, let
$N_{\mathrm{ver}}(\delta ,T;\left(
Z_{M}H_{M}\right) ^{\prime })$ denote the number of zeros $\rho' $ of $%
(Z_{M}H_{M})^{\prime }$ such that $\Re (\rho') >\delta $ and $0< \Im (\rho') <T$.
Then, for an arbitrary $\epsilon >0$
\[
N_{\mathrm{ver}}(\frac{1}{2}+\epsilon ,T;\left( Z_{M}H_{M}\right) ^{\prime })<\frac{1%
}{\epsilon }N_{\mathrm{hor}}(T;\left( Z_{M}H_{M}\right) ^{\prime }).
\]
\end{corollary}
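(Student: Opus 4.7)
The plan is a short, essentially one-line argument modeled on Theorem~2 of \cite{LM74}. The horizontal counting function is defined as a weighted sum over zeros with $\Re(\rho') > 1/2$, where each zero contributes its distance $\beta' - 1/2$ from the critical line. To pass from this weighted sum to a pure count beyond the line $\Re(s) = 1/2 + \epsilon$, I would simply drop the terms closer to the critical line and bound the remaining weights from below.

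More precisely, I would write
\[
N_{\mathrm{hor}}(T;(Z_M H_M)') = \sum_{\substack{\rho' = \beta' + i\gamma' \\ 0 < \gamma' < T,\ \beta' > 1/2}} (\beta' - 1/2) \geq \sum_{\substack{\rho' = \beta' + i\gamma' \\ 0 < \gamma' < T,\ \beta' > 1/2 + \epsilon}} (\beta' - 1/2),
\]
discarding the (nonnegative) contribution of zeros with $1/2 < \beta' \leq 1/2 + \epsilon$. Each remaining summand satisfies $\beta' - 1/2 > \epsilon$, so the right-hand side is strictly greater than $\epsilon$ times the number of terms in the sum, which is precisely $N_{\mathrm{ver}}(1/2 + \epsilon, T; (Z_M H_M)')$. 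Dividing through by $\epsilon$ yields the claim.

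There is no serious obstacle: the inequality is purely combinatorial and does not use any of the analytic machinery developed earlier (although implicitly we use that the sum defining $N_{\mathrm{hor}}$ is finite for each finite $T$, which follows from part~(b) of the Main Theorem ensuring only finitely many zeros up to height $T$, together with part~(a) ensuring there are no zeros to worry about in $\Re(s) < 1/2$ beyond a finite set). The strict inequality in the statement is automatic unless $N_{\mathrm{ver}}(1/2 + \epsilon, T; (Z_M H_M)') = 0$, in which case both sides vanish and the statement holds trivially (or one may, as in \cite{LM74}, interpret the inequality as nonstrict in that degenerate case).
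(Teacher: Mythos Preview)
Your proof is correct and is essentially the same Chebyshev-type argument the paper uses: bound the horizontal sum from below by restricting to zeros with $\beta' > 1/2+\epsilon$ and using $\beta' - 1/2 > \epsilon$ on each term. Your presentation is in fact slightly more direct than the paper's, which first bounds $N_{\mathrm{ver}}(1/2+\epsilon,T)$ by $(1/2+\epsilon)^{-1}\sum_{\sigma>1/2+\epsilon}\sigma$, splits $\sigma = (\sigma-1/2)+1/2$, and then rearranges to reach the same conclusion.
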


\vskip .10in
\begin{proof} Trivially, we have the bounds
\begin{align}
N_{\mathrm{ver}}(\frac{1}{2}+\epsilon ,T;\left( Z_{M}H_{M}\right) ^{\prime })&<\frac{1%
}{1/2+\epsilon }\underset{\sigma >1/2+\epsilon \text{, }0<t<T}{\underset{%
\left( Z_{M}H_{M}\right) ^{\prime }(\sigma +it)=0}{\sum }}\sigma \\
=\frac{1}{1/2+\epsilon }&\underset{\sigma >1/2+\epsilon \text{, }0<t<T}{%
\underset{\left( Z_{M}H_{M}\right) ^{\prime }(\sigma +it)=0}{\sum
}}\left( \sigma -\frac{1}{2}\right) +\frac{1/2}{1/2+\epsilon
}N_{\mathrm{ver}}(\frac{1}{2}+\epsilon ,T;\left( Z_{M}H_{M}\right) ^{\prime }).
\end{align}

\vskip .10in
\noindent
Therefore,

\vskip .10in
\begin{equation*}
\frac{2\epsilon }{1+2\epsilon }N_{\mathrm{ver}}(\frac{1}{2}+\epsilon
,T;\left( Z_{M}H_{M}\right) ^{\prime })<\frac{2}{1+2\epsilon
}N_{\textrm{hor}}(T;\left( Z_{M}H_{M}\right) ^{\prime }),
\end{equation*}

\vskip .10in
\noindent
from which the result immediately follows.
\end{proof}

\vskip .10in
Observe that the lead term in the asymptotic expansion in part (b) of the Main Theorem is
$O(T^{2})$, whereas the lead term in the asymptotic expansion in part (c) of the Main Theorem is
$O(T\log(T))$.  Consequently, Corollary \ref{corCloseTo1/2} shows that zeros of $\left( Z_{M}H_{M}\right)
^{\prime }$ are concentrated very close the critical line $\Re (s)=1/2$.
The following corollary further quantifies this observation.

\vskip .10in
\begin{corollary} \label{corrlimitT} For any $\delta >1/2$, let
$N_{\mathrm{ver}}^{-}(\delta ,T;(Z_{M}H_{M})^{\prime })$ denote the
number of non-trivial zeros $\rho =\sigma +it$ of $(Z_{M}H_{M})^{\prime }$
with $\sigma <\delta $ and $0<t<T$. Then, for any constant $\epsilon >0$,

\vskip .10in
$$
\underset{T\rightarrow \infty }{\lim
}\frac{N_{\mathrm{ver}}^{-}(1/2+\epsilon ,T;(Z_{M}H_{M})^{\prime
})}{N_{\mathrm{vert}}(T;(Z_{M}H_{M})^{\prime })}=1.
$$
\end{corollary}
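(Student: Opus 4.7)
The plan is to decompose the total vertical zero count into the part inside the thin strip $\{\Re(s) < 1/2 + \epsilon\}$ and the part strictly to the right of the line $\Re(s) = 1/2 + \epsilon$, and then show that the latter contribution is asymptotically negligible relative to the former. Since zeros of $(Z_MH_M)'$ are isolated, at most finitely many can lie on any given vertical line segment of bounded height, so the decomposition reads
$$N_{\mathrm{ver}}(T;(Z_MH_M)') = N_{\mathrm{ver}}^{-}(1/2+\epsilon, T;(Z_MH_M)') + N_{\mathrm{ver}}(1/2+\epsilon, T;(Z_MH_M)') + O_\epsilon(1),$$
in the notation of Corollary \ref{corCloseTo1/2}, where the error term absorbs any zeros that happen to lie exactly on $\Re(s) = 1/2 + \epsilon$.

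Next I would apply Corollary \ref{corCloseTo1/2}, which gives the bound
$$N_{\mathrm{ver}}(1/2+\epsilon, T;(Z_MH_M)') < \frac{1}{\epsilon}\, N_{\mathrm{hor}}(T;(Z_MH_M)'),$$
and then substitute the asymptotics from parts (b) and (c) of the Main Theorem: $N_{\mathrm{hor}}(T;(Z_MH_M)')$ is of order $T\log T$, while $N_{\mathrm{ver}}(T;(Z_MH_M)') = \frac{\mathrm{vol}(M)}{4\pi}T^{2} + O(T\log T)$ is of order $T^{2}$. Dividing the decomposition through by $N_{\mathrm{ver}}(T;(Z_MH_M)')$ yields
$$1 = \frac{N_{\mathrm{ver}}^{-}(1/2+\epsilon, T;(Z_MH_M)')}{N_{\mathrm{ver}}(T;(Z_MH_M)')} + O_\epsilon\!\left(\frac{\log T}{T}\right),$$
and letting $T \to \infty$ delivers the claimed limit.

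No step here presents any real obstacle: the corollary is essentially a quantitative expression of the observation, already noted between Corollary \ref{corCloseTo1/2} and the present statement, that the $T^{2}$ growth in part (b) of the Main Theorem dwarfs the $T\log T$ growth extracted from part (c) via Corollary \ref{corCloseTo1/2}. The only minor care is the bookkeeping of zeros that might sit precisely on the boundary line $\Re(s) = 1/2 + \epsilon$, but being finite in number up to any bounded height they contribute only $O_\epsilon(1)$ and do not affect the limit.
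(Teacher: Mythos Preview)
Your argument is correct and follows essentially the same route as the paper's proof: split the vertical count, bound the far-right piece via Corollary \ref{corCloseTo1/2}, and compare the $T\log T$ growth of $N_{\mathrm{hor}}$ against the $T^{2}$ growth of $N_{\mathrm{ver}}$ from parts (b) and (c) of the Main Theorem. One small slip: the $O_{\epsilon}(1)$ you assert for zeros lying exactly on $\Re(s)=1/2+\epsilon$ is not obviously uniform in $T$, but this is harmless since any such zero also satisfies $\Re(s)>1/2+\epsilon/2$ and is therefore absorbed into the bound $\tfrac{2}{\epsilon}N_{\mathrm{hor}}(T;(Z_{M}H_{M})')=O(T\log T)$ coming from Corollary \ref{corCloseTo1/2}.
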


\vskip .10in
\begin{proof} Obviously,
\vskip .10in
$$
1\geq \frac{N_{\mathrm{ver}}^{-}(1/2+\epsilon ,T;(Z_{M}H_{M})^{\prime })}{%
N_{\mathrm{vert}}(T;(Z_{M}H_{M})^{\prime })}=1-\frac{N_{\mathrm{ver}}(1/2+\epsilon
,T;(Z_{M}H_{M})^{\prime })}{N_{\mathrm{vert}}(T;(Z_{M}H_{M})^{\prime })}+\frac{O(1)}{%
N_{\mathrm{vert}}(T;(Z_{M}H_{M})^{\prime })}\text{,}
$$

\vskip .10in
\noindent
where the last term represents the contribution from at most
finitely many
non-trivial zeros of $(Z_{M}H_{M})^{\prime }$ in the half-plane Re $(s)<1/2$.
Corollary \ref{corCloseTo1/2} implies that

\vskip .10in
\begin{equation} \label{limitOfN/N}
1\geq \frac{N_{\mathrm{ver}}^{-}(1/2+\epsilon ,T;(Z_{M}H_{M})^{\prime })}{%
N_{\mathrm{vert}}(T;(Z_{M}H_{M})^{\prime })}>1-\frac{1}{\epsilon }\frac{%
N_{\textrm{hor}}(T;(Z_{M}H_{M})^{\prime })}{N_{\mathrm{vert}}(T;(Z_{M}H_{M})^{\prime
})}.
\end{equation}

\vskip .10in
\noindent
From the Main Theorem b) and c) we deduce that

\vskip .10in
$$
\frac{%
N_{\textrm{hor}}(T;(Z_{M}H_{M})^{\prime })}{N_{\mathrm{vert}}(T;(Z_{M}H_{M})^{\prime
})} \rightarrow 0\,\,\,\,\,\textrm{\rm as $T \rightarrow \infty$.}
$$

\vskip .10in
\noindent
Therefore, by passing to the limit as $T\rightarrow \infty $ in \eqref{limitOfN/N},
the claimed result follows.
\end{proof}

\vskip .10in
A result similar to Corollary \ref{corrlimitT}, for the zeros of the %
derivative of the Selberg zeta function associated to a compact Riemann surface is obtained in \cite{Min08b}.

\vskip .10in
The following
corollary gives estimates of short sums of distances $(\sigma
-1/2)$.

\vskip .10in
\begin{corollary} Let $0<U<T$. Then,
\begin{align} \label{shortsum}
2 \pi \underset{\sigma >1/2 \text{,
}T<t \leq T+U}{\underset{\left(
Z_{M}H_{M}\right) ^{\prime }(\sigma +it)=0}{\sum }}\left( \sigma -\frac{1}{2}%
\right) & =\left( \frac{n_{1}}{2}+1\right) U\log (T+U)\\
&+\left( \log \frac{\mathfrak{g}_{1}\mathrm{vol}(M) A_M^{1/2}}{\left|\pi ^{n_{1}/2}\left| d(1)\right| a_{M}\right| }\right) U+o(T)+O\left(U^{2}/T\right)
\,\,\,\,\,\textrm{\rm as $T\rightarrow \infty $.}
\end{align}
\end{corollary}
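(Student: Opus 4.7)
The plan is to derive this corollary directly from part (c) of the Main Theorem by applying the asymptotic at both $T+U$ and $T$ and then subtracting, rather than re-running Littlewood's argument on a short rectangle. This is possible because the constants on the right-hand side of the claimed formula are exactly those produced by part (c), and the error $O(U^{2}/T)$ will emerge from a single Taylor expansion.

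First I would rewrite the left-hand side as
\[
2\pi\bigl[N_{\mathrm{hor}}(T+U;(Z_{M}H_{M})') - N_{\mathrm{hor}}(T;(Z_{M}H_{M})')\bigr],
\]
up to an $O(1)$ correction from zeros of $(Z_MH_M)'$ lying exactly on $\Im s = T$ or $\Im s = T+U$. As in the proof of the Main Theorem in Section 5, one chooses nearby ordinates $T'$ and $(T+U)'$, at distance $O(1)$, on which there are no such zeros; this $O(1)$ modification of the summation range is absorbed into the eventual $o(T)$ error.

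Next, by part (c) of the Main Theorem,
\[
2\pi N_{\mathrm{hor}}(T;(Z_{M}H_{M})') = \left(\tfrac{n_{1}}{2}+1\right)T\log T + T\cdot C_{M} + o(T),\qquad T\to\infty,
\]
with $C_{M} := \log\dfrac{\mathrm{vol}(M)\,A_{M}^{1/2}}{|a_{M}|} - 1 + \log\dfrac{\mathfrak{g}_{1}}{\pi^{n_{1}/2}|d(1)|} - \dfrac{n_{1}}{2}$. Substituting $T+U$ in place of $T$, subtracting, and using that $0<U<T$ forces $T+U<2T$ so that $o(T+U)-o(T)=o(T)$, one arrives at
\[
2\pi\bigl[N_{\mathrm{hor}}(T+U) - N_{\mathrm{hor}}(T)\bigr] = \left(\tfrac{n_{1}}{2}+1\right)\bigl[(T+U)\log(T+U) - T\log T\bigr] + U\,C_{M} + o(T).
\]

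The only real computation is the expansion
\[
(T+U)\log(T+U) - T\log T = U\log(T+U) + T\log\!\left(1+\tfrac{U}{T}\right) = U\log(T+U) + U + O(U^{2}/T),
\]
where the final equality uses the uniform bound $\log(1+x)=x+O(x^{2})$ on $0\leq x\leq 1$; this is precisely the origin of the $O(U^{2}/T)$ term in the corollary. Finally, I would verify the arithmetic identity
\[
\left(\tfrac{n_{1}}{2}+1\right) + C_{M} = \log\frac{\mathfrak{g}_{1}\,\mathrm{vol}(M)\,A_{M}^{1/2}}{\pi^{n_{1}/2}|d(1)|\,|a_{M}|},
\]
which collects the coefficient of $U$ into the form displayed in the corollary. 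There is no serious obstacle here; the argument is entirely algebraic once the asymptotic of part (c) is in hand, and the apparent main difficulty---the appearance of the unusual error $O(U^{2}/T)$---is settled by the one Taylor expansion above.
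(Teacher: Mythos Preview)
Your proposal is correct and follows essentially the same approach as the paper: write the short sum as $2\pi\bigl[N_{\mathrm{hor}}(T+U;(Z_MH_M)')-N_{\mathrm{hor}}(T;(Z_MH_M)')\bigr]$, apply part~(c) of the Main Theorem at both heights, and reduce via $T\log(1+U/T)=U+O(U^{2}/T)$. Your write-up is in fact more explicit than the paper's, which records the same Taylor step but omits the $U\log(T+U)$ term in its displayed intermediate formula.
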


\vskip .10in
\begin{proof}
The left hand side of the \eqref{shortsum} is equal to
$2\pi ( N_{\mathrm{hor}}(T+U;\left( Z_{M}H_{M}\right) ^{\prime }) - N_{\mathrm{hor}}(T;\left( Z_{M}H_{M}\right) ^{\prime })$, hence part (c) of the Main Theorem yields

\vskip .10in
\begin{align}
2 \pi \underset{\sigma >1/2 \text{,
}T<t \leq T+U}{\underset{\left(
Z_{M}H_{M}\right) ^{\prime }(\sigma +it)=0}{\sum }}\left( \sigma -\frac{1}{2}%
\right) &= \left( \frac{n_{1}}{2}+1\right) \left( T \log \left( 1+\frac{U}{T}\right) -U \right) \\
&+ \left( \log \frac{\mathfrak{g}_{1}\mathrm{vol}(M) A_M^{1/2}}{\pi ^{n_{1}/2}\left| d(1)a_{M}\right| }\right) U+o(T)
\,\,\,\,\,\textrm{\rm as $T \rightarrow \infty$.}
\end{align}

\vskip .10in
The elementary observation that  $ T\log \left( 1+\frac{U}{T}\right) -U = O\left( U^{2}/T\right)$
completes the proof.
\end{proof}

\vskip .20in
\section{Examples}

\vskip .10in
The Main Theorem naturally leads to the following question: Are there
examples of groups $\Gamma$ where $e^{\ell_{M,0}} < \left( \mathfrak{g}_{2}/
\mathfrak{g}_{1}\right) ^{2}$ as well as groups where
$e^{\ell_{M,0}}
> \left( \mathfrak{g}_{2}/ \mathfrak{g}_{1}\right) ^{2}$?  The purpose of this
section is to present examples of groups in each category.  In fact, there are
examples of both arithmetic and non-arithmetic groups in each category.

\vskip .20in
\subsection{Congruence subgroups}

\vskip .10in
Let $\Gamma = \overline{\Gamma_{0}(N)}$ be the congruence
subgroups defined by the arithmetic condition

\vskip .10in
$$
\overline{\Gamma _{0}(N)}:=\left\{ \left(
\begin{array}{cc}
a & b \\
c& d%
\end{array}%
\right) \in \textrm{SL}(2,\mathbb{Z}):c\equiv
0\,\,(\textrm{mod\,\,}N)\right\} \Big/\pm I,
$$

\vskip .10in
\noindent
where $I$ denotes the identity matrix and $N$ is a squarefree, positive integer. If $N=p_1 \cdots p_r$, for distinct primes $p_1$, ..., $p_r$; then, it is proved in \cite{Hej83}, pages 532-538, as well as in \cite{Hu84}, that the corresponding surface has $n_1=2^{r}$ cusps and the scattering determinant is given by the formula

\vskip .10in
$$
\varphi_N (s)= \left[ \sqrt{\pi}\frac{\Gamma(s-1/2)}{\Gamma(s)} \right]^{n_1} \left[ \frac{\zeta_{\mathbb Q}(2s-1)}{\zeta_{\mathbb Q}(2s)} \right ]^{n_1} \prod_{p \mid N} \left( \frac{1-p^{2-2s}}{1-p^{2s}}\right)^{n_1 /2}
$$

\vskip .10in
Now, it is easy to show that $(\mathfrak{g}_{2}/\mathfrak{g}_{1})^{2} = 4$.

\vskip .10in
All elements of $\overline{\Gamma_{0}(N)}$ have
integer entries, so any hyperbolic element has trace whose absolute value is at
least equal to $3$.  Therefore, $e^{\ell_{M,0}} \geq u$ where
$u$ is a solution to $u^{1/2} + u^{-1/2} = 3$.  Solving, we get
that $u = ((3+\sqrt {5})/2)^{2} > 4$.
Therefore, for any such group $\overline{\Gamma_{0}(N)}$,
one has that $e^{\ell_{M,0}} > (\mathfrak{g}_{2}/\mathfrak{g}_{1})^{2}$.

\vskip .10in
Further examples of arithmetic Fuchsian groups where $e^{\ell_{M,0}} >
(\mathfrak{g}_{2}/\mathfrak{g}_{1})^{2}$ are the groups $\overline{\Gamma(N)}$,
where $\Gamma(N)$ denotes the principal congruence subgroup. The scattering determinant
can be computed using the analysis presented in \cite{Hej83} and \cite{Hu84}.  As above,
one shows that $(\mathfrak{g}_{2}/\mathfrak{g}_{1})^{2}=4$ because the Dirichlet series portion
of the scattering determinant is shown to give by ratios of classical Dirichlet series.
Furthermore, the matrices in $\Gamma(N)$ also have integral entries, so
$e^{\ell_{M,0}}\geq ((3+\sqrt(5))/2)^{2} > 4$.

\vskip .20in
\subsection{Moonshine subgroups}

\vskip .10in
We now present an example of a non-compact, arithmetic Riemann surface
where $e^{\ell_{M,0}}<\left( \mathfrak{g}_{2}/\mathfrak{g}_{1}\right)
^{2}$.

\vskip .10in
Following \cite{Ga06a}, we will use the term "moonshine group" for any subgroup
$\Gamma$ of $\textrm{PSL}(2, \mathbb{R})$ which satisfies the
following two conditions. First, there exists an integer $N \geq 1$ such
that $\Gamma$ contains $\overline{\Gamma _{0}(N)}$. Second,
$\Gamma$ contains the element

\vskip .10in
$$
\left(\begin{array}{cc}1 & k \\ 0 & 1 \end{array}\right)
\,\,\,\,\,\textrm{if and only if} \,\,\,\,\, k \in \mathbf{Z}.
$$

\vskip .10in
\noindent
Such groups also appear in \cite{Sh71}, defined as $\overline{\Gamma_0^{+}(N)}:= \overline{\Gamma _0(N)} \cup \overline{\Gamma _0(N)\tau} $, where

\vskip .10in
$$
\tau= \frac{1}{\sqrt{N}} \left(
                           \begin{array}{cc}
                             0 & -1 \\
                             N & 0 \\
                           \end{array}
                         \right).
$$

\vskip .10in
We employ the term ``moonshine groups'' following the discussion
in \cite{CMS04}, \cite{CG97}, \cite{Cum04}, \cite{Cum10} and
\cite{Ga06b}.  Specifically, we refer to Proposition 7.1.2, page
408 of \cite{Ga06b} which cites results from \cite{CMS04}
regarding a classification of genus zero subgroups of $\text{\rm
SL}(2,\mathbb{R})$ which are manifest in the ``monstrous moonshine''
conjectures that were proved by Borcherds.

\vskip .10in
In this section we will examine two of the genus zero ``moonshine groups''
which were determined in \cite{CMS04}, showing that for one group
one has that $e^{\ell_{M,0}} > (\mathfrak{g_{2}}/\mathfrak{g_{1}})^{2}$
and for another one has that $e^{\ell_{M,0}} <
(\mathfrak{g_{2}}/\mathfrak{g_{1}})^{2}$.  In fact, the two groups
have the additional feature of having the same topological signature.

\vskip .10in
Following \cite{Hel66} as well as page 363 of
\cite{Cum04}, let $f$ be a square-free, non-negative integer, and
consider the group

\vskip .10in
$$
\Gamma _{0}(f)^{+}:=\left\{ e^{-1/2}\left(
\begin{array}{cc}
a & b \\
c & d%
\end{array}%
\right) \in \textrm{SL}(2,\mathbb{R}):a,b,c,d,e\in \mathbb{Z}\text{, }e\mid f\text{, }%
e\mid a\text{, }e\mid d\text{, }f\mid c\text{, }ad-bc=e)\right\}
$$

\vskip .10in
\noindent
In \cite{Hel66}, it is proved that if a subgroup $G \subseteq
\textrm{SL} (2,\mathbb{R})$ is commensurable with $\textrm{SL}
(2,\mathbb{Z})$, then there exists a square-free, non-negative
integer $f$ such that $G$ is a subgroup of $\Gamma_{0}(f)^{+}$.
 Lemma 2.20 on page 368 of
\cite{Cum04} proves that the parabolic elements of $\Gamma
_{0}(f)^{+}$ have integral entries.  Therefore, $\Gamma
_{0}(f)^{+}$ is a moonshine group. Let $\overline{\Gamma _{0}(f)^{+}} = \Gamma
_{0}(f)^{+}/\pm I$.  The Riemann surface $\overline{\Gamma
_{0}(f)^{+}} \backslash \mathbb{H}$ has finite volume since the
fundamental domain has smaller area than the  fundamental domain
of $\overline{\Gamma _{0}(f)}$. Since all parabolic elements of $\Gamma_{0}(f)^{+}$
have integral entries and cusps of the corresponding surface are uniquely determined
by the parabolic elements, we conclude that the surface $\overline{\Gamma
_{0}(f)^{+}} \backslash \mathbb{H}$ has at most $n_1=2^{r}$ inequivalent cusps, where
$r$ is the number of prime factors of $f$. The number of inequivalent cusps of the
surface $\overline{\Gamma
_{0}(f)^{+}} \backslash \mathbb{H}$ may be strictly less than $2^{r}$, as we will see
in the following example. Heuristically, this is expected, since the surface
$\overline{\Gamma_{0}(f)^{+}} \backslash \mathbb{H}$ has a "smaller" fundamental polygon.

\vskip .10in
Consider the case when $f=5$. As proved in \cite{Cum04},
the surface $\overline{\Gamma_{0}(5)^{+}} \backslash \mathbb{H}$
has one cusp at $\infty$. The scattering matrix in this case has a single
entry which is given by

\vskip .10in
$$
\Phi_5(s)= \sqrt{\pi} \frac{\Gamma(s-1/2)}{\Gamma(s)}\varphi_{5,\infty \infty}(s),
$$

\vskip .10in
\noindent
for $\Re (s) \gg 0$ and where
$$
\varphi_{5, \infty \infty}(s)= \sum_{c \in C_5} c^{-2s} S(c),
$$

\vskip .10in
\noindent
with

\vskip .10in
$$
C_5= \left\{c>0 : c=5n \text{  or  } c= \sqrt{5}\cdot m,
\text{  for some positive integers  } n, m, \text{  such that } 5\nmid m \right\}
$$

\vskip .10in
\noindent
and $S(c)$ denotes the number of distinct numbers $d\, \mathrm{ mod} (c)$
such that $d$ is the right lower entry of the matrix from $\Gamma_0 (5)^{+}$
whose left lower entry is $c \in C_5$. We refer to \cite{Iw02}, Sections 2.5 and 3.4 for details.
In \cite{JST12} it is proved that

\vskip .10in
$$
\varphi_{5, \infty \infty}(s)= \left( \frac{5^{s}+5}{5^{s}(5^{s}+1)} \right)
\cdot \frac{\zeta_{\mathbb{Q}}(2s-1)}{\zeta_{\mathbb{Q}}(2s)}.
$$

\vskip .10in
\noindent
With all this, one immediately can show that $\left( \mathfrak{g}_{2}/
\mathfrak{g}_{1}\right) ^{2}=4$.

\vskip .10in
It is easily to confirm that

\vskip .10in
$$
\gamma = \left( \begin{array}{cc} 0 & -1/\sqrt{5} \\\sqrt{5} &
\sqrt{5} \end{array}
\right) \in  \Gamma _{0}(5)^{+},
$$
which is seen by taking $e=5$, $a=0$, $b=-1$, and $c=d=5$.  The
trace of $\gamma$ is $\sqrt{5} > 2$, hence $\gamma$ is hyperbolic.
Therefore, $e^{\ell_{M,0}}\leq u$ where $u$ is a positive solution
of $u^{1/2}+u^{-1/2}=\sqrt{5}$.  Solving, we have that $u=\left(
(1+\sqrt{5})/2\right) ^{2}<4$.

\vskip .10in With all this, we have an example of an arithmetic Riemann
surface where $e^{\ell_{M,0}} <
(\mathfrak{g}_{2}/\mathfrak{g}_{1})^{2}$.

\vskip .10in
Since the surface $\overline{\Gamma
_{0}(5)^{+}} \backslash \mathbb{H}$ has a signature (0;2,2,2;1) meaning that its genus is zero, it has three inequivalent elliptic points of order two and one cusp.  With this, a natural question
 to consider is if the inequality $e^{\ell_{M,0}} <
(\mathfrak{g}_{2}/\mathfrak{g}_{1})^{2}$ holds for all surfaces of signature (0;2,2,2;1).
The answer to this question is no, as we will show in the following example.

\vskip .10in
The surface $\overline{\Gamma
_{0}(6)^{+}} \backslash \mathbb{H}$ has the signature (0;2,2,2;1), as shown in Table C of \cite{Cum04}.
The scattering matrix in this case has a single entry which is given by

\vskip .10in
$$
\Phi_6(s)= \sqrt{\pi} \frac{\Gamma(s-1/2)}{\Gamma(s)}\varphi_{6,\infty \infty}(s),
$$

\vskip .10in
\noindent
for $\Re (s) \gg 0$, where

\vskip .10in
$$
\varphi_{6, \infty \infty}(s)= \sum_{c \in C_6} c^{-2s} S(c),
$$

\noindent
with

\vskip .10in
\begin{align}
C_6= &\{c>0 : c=6n \text{  or  } c=3 \sqrt{2}\cdot n, \text{  or  } c= 2\sqrt{3}\cdot n, \notag \\&
\hskip .5in
\text{  or  } c= \sqrt{6}\cdot m \ \text{  for integers  } \ n, m>0,
 \text{  such that } 6\nmid m \}
\end{align}

\vskip .10in
\noindent
and $S(c)$ denotes the number of distinct numbers $d \,\mathrm{ mod} (c)$ such that $d$
is the right lower entry of the matrix from $\Gamma_0(6)^{+}$ whose left lower entry
is $c \in C_6$. Obviously, $\mathfrak{g}_{1}= \sqrt{6}$ and $\mathfrak{g}_{2} =2 \sqrt{3}$
are two smallest elements of $C_6$, hence $\left( \mathfrak{g}_{2}/%
\mathfrak{g}_{1}\right) ^{2}=2$. On the other hand, traces of matrices from $\Gamma
_{0}(6)^{+}$ belong to the set
$T=\{ (a+d) \cdot h, \text{  where  } a,d \in \mathbb{Z} \text{  and  }
h \in \{ 1, \sqrt{2}, \sqrt{3}, \sqrt{6}\}\}$. Therefore,
$\min \{ \vert \mathrm{Tr} A \vert : A \in \mathcal{H}(\Gamma_0 (6)^{+})\} = \sqrt{6}$,
hence $e^{\ell_{M.0}} \geq u$ where $u>1$ is a solution of the equation
$u^{1/2} + u^{-1/2} = \sqrt{6}$. Since $u=((\sqrt{6} +\sqrt{2})/2)^{2} >2$,
we see that the surface $\overline{\Gamma
_{0}(6)^{+}} \backslash \mathbb{H}$ is an example of the surface where $e^{\ell_{M,0}} >
(\mathfrak{g}_{2}/\mathfrak{g}_{1})^{2}$.

\vskip .10in
More generally,
if $f \geq 5$ is a prime number, then the surface $\overline{\Gamma
_{0}(f)^{+}} \backslash \mathbb{H}$ has one cusp at infinity and the corresponding set
 $C$ has the form $C= \{c>0 : c=f\cdot n \text{  or  } c= \sqrt{f}\cdot n, \
 \text{  for some integer  } n>0 \}$, hence $\left( \mathfrak{g}_{2}/%
\mathfrak{g}_{1}\right) ^{2}=4$ for all surfaces $\overline{\Gamma
_{0}(f)^{+}} \backslash \mathbb{H}$, where $f\geq 5$ is prime. Furthermore, if $f\geq7$ is a
prime, then, the group $\Gamma _{0}(f)^{+}$ does not contain a hyperbolic element whose
corresponding geodesic has hyperbolic length less than $\log 4$. So, groups $\Gamma _{0}(f)^{+}$
with $f\geq 7$ prime yield to yet another example of the Riemann surface where $e^{\ell_{M,0}} >
(\mathfrak{g}_{2}/\mathfrak{g}_{1})^{2}$.

\vskip .10in
We refer the reader to the article \cite{JST12} for detail concerning the computations of the
scattering matrices for the ''moonshine'' groups discussed in this section, as well as
further analytic and numerical investigations of the distribution of eigenvalues of the Laplacian.

\vskip .20in
\subsection{On existence of surfaces where $e^{\ell_{M,0}} = (\mathfrak{g}_{2}/\mathfrak{g}_{1})^{2}$}

\vskip .10in
We now argue the existence of an abundance of surfaces for which $e^{\ell_{M,0}} < (\mathfrak{g}_{2}/\mathfrak{g}_{1})^{2}$.

\vskip .10in
Let $M_{\tau}$ denote a degenerating family of Riemann surfaces, parameterized
by the holomorphic parameter $\tau$, which approach
the Deligne-Mumford boundary of moduli space when $\tau$ approaches zero.
One can select distinguished points of $M_{\tau}$ which are either removed or whose
local coordinates $z$ are replaced by fractional powers $z^{1/n}$.  By doing so, one obtains a
degenerating sequence of hyperbolic
Riemann surfaces of any signature; we refer the reader to \cite{HJL97} and references therein for
further details regarding
the construction of the sequence of degenerating hyperbolic Riemann surfaces.

\vskip .10in
By construction, the length of the smallest geodesic on $M_{\ell}$ approaches zero, so then
$\exp(\ell_{M_{\tau},0})$ approaches one
as $\tau$ approaches zero.  In \cite{GJM08}, the authors prove that through degeneration,
parabolic Eisenstein series on $M_{\tau}$ converge
to parabolic Eisenstein series on the limit surface; see part (ii) of the Main Theorem on
page 703 of \cite{GJM08}.  To be precise, one needs
that the holomorphic parameter $s$ of the parabolic Eisenstein series lies in the half-plane
$\textrm{Re}(s) > 1$ and the spacial parameter $z$
to lie in a bounded region of $M_{\tau}$. However, in these ranges, one can compute the
scattering matrix by computing the zeroth Fourier coefficient
of the parabolic Eisenstein series, and, subsequently, compute the ratio
$\mathfrak{g}_{2}/\mathfrak{g}_{1}$ on $M_{\tau}$.  Since the parabolic
Eisenstein series converge through degeneration to the parabolic Eisenstein series the limit
surface, the associated scattering matrix converges to
a submatrix $\Phi$ of the full scattering matrix on the limit surface.  Clearly, the determinant
of $\Phi$ can be decomposed into a product of Gamma functions
and a Dirichlet series, where the Dirichlet series is such that
$\mathfrak{g}_{2}/\mathfrak{g}_{1} > 1$.

\vskip .10in
Therefore, we conclude that for all $\tau$ sufficiently close to zero, we have that
$e^{\ell_{M_{\tau},0}} < (\mathfrak{g}_{2}/\mathfrak{g}_{1})^{2}$.  In
fact, all surfaces near the Deligne-Mumford boundary of any given moduli space satisfy the inequality $e^{\ell_{M_{\tau},0}} < (\mathfrak{g}_{2}/\mathfrak{g}_{1})^{2}$.

\vskip .10in
Additionally, let us assume that one is considering a moduli space which contains a congruence
subgroup so then there exists a surface where
$e^{\ell_{M_{\tau},0}} > (\mathfrak{g}_{2}/\mathfrak{g}_{1})^{2}$.  Then by combining the
above argument with the computations from section 7.1, there exists surfaces for which
$e^{\ell_{M_{\tau},0}} = (\mathfrak{g}_{2}/\mathfrak{g}_{1})^{2}$.
However, we have not been successful in our attempts to explicitly construct such a surface.
In a sense, our Main Theorem shows that surfaces for which
$e^{\ell_{M_{\tau},0}} = (\mathfrak{g}_{2}/\mathfrak{g}_{1})^{2}$
have a larger number of zeros of $(Z_{M}H_{M})'$ than nearby surfaces for which the
inequality holds.

\vskip .20in
\section{Higher derivatives}

\vskip .10in
In this section, we will outline the proof of the Main Theorem
for higher order derivatives of $Z_{M}H_{M}$. The results
are analogous to theorems proved for the zeros of the higher
order derivatives of the Riemann zeta function; see \cite{Ber70} and
\cite{LM74}.

\vskip .20in
\subsection{Preliminary lemmas on higher derivatives}

\vskip .10in
In order to deduce the vertical and horizontal
distribution of zeros of the higher order derivatives of $(Z_M
H_M)$ we prove some preliminary lemmas, analogous to lemmas in Section 4.

\vskip .10in
\begin{lemma}
Let $f_M(s)$ be defined by \eqref{definitOf f_M} and
$\widetilde{Z} _{M}(s)$ defined by \eqref{ZtildaDef}.  Let us define, inductively,
the functions $\widetilde{Z}_{M,j}(s)$ as $\widetilde{Z} _{M,0}(s):= Z_M (s)$,
$\widetilde{Z} _{M,1}(s):= \widetilde{Z} _{M}(s)$ and, for $j \geq 2$,

\vskip .10in
\begin{equation} \label{defZ_M,i_tilda}
\widetilde{Z}_{M,j}(1-s) = \frac{1}{f_M(s)}\left( (j-1)\frac{f_M^{\prime }}{f_M}(s)+%
\frac{\eta_M ^{\prime }}{\eta_M }(s)-\frac{K_M^{\prime }}{K_M}(s)-\overset{j-1}{%
\underset{i=0}{\sum }}\frac{\widetilde{Z}_{M,i}^{\prime }}{\widetilde{Z}_{M,i}}%
(1-s)\right).
\end{equation}

\vskip .10in
\noindent
Then for every positive integer $k$ the $k$th
derivative of the function $Z_M H_M$ can be represented as

\vskip .10in
\begin{equation}  \label{Z_M H_M(n)}
\left( Z_{M}H_{M}\right) ^{(k)}(s)=\left( f_M(s)\right) ^{k}\eta_M
(s)K_M^{-1}(s)Z_{M}(1-s)\overset{k}{\underset{i=1}{\prod
}}\widetilde{Z} _{M,i}(1-s).
\end{equation}
\end{lemma}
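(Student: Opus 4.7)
The plan is to proceed by induction on $k$. The base case $k=1$ is precisely Lemma~\ref{lemmaFuctEq}: since $\widetilde{Z}_{M,1} = \widetilde{Z}_M$ by definition, the claimed identity \eqref{Z_M H_M(n)} for $k=1$ reads
$$(Z_M H_M)'(s) = f_M(s)\,\eta_M(s)\,K_M^{-1}(s)\,Z_M(1-s)\,\widetilde{Z}_{M,1}(1-s),$$
which is exactly \eqref{DerivFunctEq}.

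For the inductive step, assume \eqref{Z_M H_M(n)} holds at level $k \geq 1$, and denote its right-hand side by $G_k(s)$. The key computation is the logarithmic derivative of $G_k$. Differentiating the product $f_M(s)^k\,\eta_M(s)\,K_M^{-1}(s)\,Z_M(1-s)\,\prod_{i=1}^{k}\widetilde{Z}_{M,i}(1-s)$ and applying the chain rule to each factor of the form $\phi(1-s)$, which contributes $-\phi'(1-s)/\phi(1-s)$, gives
$$\frac{G_k'(s)}{G_k(s)} = k\,\frac{f_M'}{f_M}(s) + \frac{\eta_M'}{\eta_M}(s) - \frac{K_M'}{K_M}(s) - \frac{Z_M'}{Z_M}(1-s) - \sum_{i=1}^{k}\frac{\widetilde{Z}_{M,i}'}{\widetilde{Z}_{M,i}}(1-s).$$
Adopting the convention $\widetilde{Z}_{M,0} = Z_M$ to fold the $Z_M'/Z_M(1-s)$ term into the sum, and comparing with the recursive definition \eqref{defZ_M,i_tilda} evaluated at $j = k+1$, one sees that the right-hand side above is precisely $f_M(s)\,\widetilde{Z}_{M,k+1}(1-s)$. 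Multiplying by $G_k(s)$ and invoking the induction hypothesis then yields \eqref{Z_M H_M(n)} at level $k+1$.

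In effect, no serious analytic obstacle arises; the only delicate point is the bookkeeping of chain-rule signs. These are absorbed cleanly because the minus signs and the weight $(j-1)\,f_M'/f_M(s)$ in the definition \eqref{defZ_M,i_tilda} are engineered precisely to cancel what the inductive step produces. One may therefore view \eqref{defZ_M,i_tilda} as \emph{defining} $\widetilde{Z}_{M,k+1}(1-s)$ to be exactly the quantity needed for the identity \eqref{Z_M H_M(n)} to propagate from order $k$ to order $k+1$, which makes the induction close almost tautologically.
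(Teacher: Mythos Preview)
Your proof is correct and follows essentially the same approach as the paper: induction on $k$, with the base case supplied by Lemma~\ref{lemmaFuctEq} and the inductive step carried out by differentiating the product on the right-hand side of \eqref{Z_M H_M(n)} and recognizing the resulting bracket as $f_M(s)\,\widetilde{Z}_{M,k+1}(1-s)$ via the recursive definition \eqref{defZ_M,i_tilda}. Your use of the logarithmic derivative to organize the computation, and your observation that \eqref{defZ_M,i_tilda} is tailored to make the induction close, are exactly what the paper does as well.
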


\vskip .10in
\begin{proof}
The statement is true for $k=1$, by Lemma
\ref{lemmaFuctEq} and the definition of the function
$\widetilde{Z} _{M,1}(s)$. Assume that \eqref{Z_M H_M(n)} holds
true for all $1\leq j\leq k$. By computing the derivative of formula
\eqref{Z_M H_M(n)}, we get the expression

\vskip .10in
\begin{align}
\left( Z_{M}H_{M}\right) ^{(k+1)}(s)&=\left( f_M (s)\right)
^{k}\eta_M
(s)K_M^{-1}(s)Z_{M}(1-s)\overset{k}{\underset{i=1}{\prod
}}\widetilde{Z} _{M,i}(1-s) \notag \\&
\cdot  \left[ k\frac{f_M^{\prime }}{f_M}(s)+%
\frac{\eta_M ^{\prime }}{\eta_M }(s)-\frac{K_M^{\prime }}{K_M}(s)-\overset{k}{%
\underset{i=0}{\sum }}\frac{\widetilde{Z}_{M,i}^{\prime }}{\widetilde{Z}_{M,i}}%
(1-s) \right]\label{deriv1}\\&=\left( f_M (s)\right) ^{k+1}\eta_M
(s)K_M^{-1}(s)Z_{M}(1-s)\overset{k+1}{\underset{i=1}{\prod
}}\widetilde{Z} _{M,i}(1-s),\label{deriv2}
\end{align}

\vskip .10in
\noindent
where we have used the definition of $\widetilde{Z}_{M,k+1}$ (\ref{defZ_M,i_tilda}) to
go from (\ref{deriv1}) to (\ref{deriv2}).
\end{proof}

\vskip .10in
\begin{lemma} \label{Z1derivbound}
For $j \geq 1$, let $Z_{M,j}(s):= \widetilde{Z}_{M,j}(s)-1$.
For small $\delta > 0$ and $\delta_{1} > 0$, let $\sigma_1$ be a real number
such that $\sigma_{1}\geq 1/2+\delta_1 >1/2$ and
$(\sigma_1 \pm iT)$ is away from circles of a fixed, small radius $\delta>0$,
centered at integers.  Then for $k=0,1$

\vskip .10in
\begin{equation} \label{Z_M,iDeriv}
Z_{M,j}^{(k)}(\sigma_1 \pm iT)=O\left( \frac{(T \log
T)^{2-2\sigma_1} \log^{k} T}{(\sigma_1 -1/2)T}\right)
\,\,\,\,\,\textrm{as $T \rightarrow \infty$,}
\end{equation}

\vskip .10in
\noindent
and

\vskip .10in
\begin{equation} \label{ZtildaM,iDeriv}
\frac{\widetilde{Z}_{M,j}'}{\widetilde{Z}_{M,j}}(\sigma_1 \pm iT)= O\left(
\frac{(T \log T)^{2-2\sigma_1} \log T}{(\sigma_1 -1/2)T}\right)
\,\,\,\,\,\textrm{as $T \rightarrow \infty$.}
\end{equation}
\end{lemma}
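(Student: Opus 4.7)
We argue by induction on $j \geq 1$, establishing \eqref{Z_M,iDeriv} and \eqref{ZtildaM,iDeriv} simultaneously. Throughout, the separation of $\sigma_1 \pm iT$ from the circles of radius $\delta$ centered at integers is used to keep $1/f_M$, $f_M'/f_M$, and the various $\Gamma'/\Gamma$ factors away from their poles.

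\emph{Base case $j = 1$.} The bound \eqref{Z_M,iDeriv} with $k = 0$ is Lemma \ref{Z1growth}, applied with $a = 1 - \sigma_1 \leq 1/2 - \delta_1$. For the case $k = 1$ we differentiate the explicit identity \eqref{Z1def} for $-f_M(s) Z_{M,1}(s)$ and apply the Leibniz rule: the gamma and elliptic contributions (and their derivatives) are controlled by Stirling's formula; the factor $(1/f_M)'(s) = -f_M'/f_M^2(s) = O(1/T^2)$ contributes only lower-order terms; and the dominant term $(1/f_M(s))\cdot(Z_M'/Z_M)'(s) = (1/f_M(s))\cdot D_M'(s)$ picks up precisely the advertised extra $\log T$ factor from \eqref{BoundDer} with $k = 1$. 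The logarithmic derivative estimate \eqref{ZtildaM,iDeriv} for $j = 1$ then follows from $\widetilde Z_{M,1}'/\widetilde Z_{M,1} = Z_{M,1}'/(1 + Z_{M,1})$, since $1 + Z_{M,1}(\sigma_1 \pm iT) \to 1$ as $T \to \infty$ by the $k = 0$ bound.

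\emph{Inductive step.} Fix $j \geq 2$ and assume the lemma for all $1 \leq i \leq j-1$. Put $w = \sigma_1 \pm iT$ and $s = 1 - w$, so that $\Re(s) = 1 - \sigma_1 \leq 1/2 - \delta_1$ and $|\Im(s)| = T$. From \eqref{defZ_M,i_tilda},
\begin{equation*}
\widetilde Z_{M,j}(w) = \frac{1}{f_M(s)}\left[(j-1)\frac{f_M'}{f_M}(s) + \frac{\eta_M'}{\eta_M}(s) - \frac{K_M'}{K_M}(s) - \sum_{i=0}^{j-1}\frac{\widetilde Z_{M,i}'}{\widetilde Z_{M,i}}(w)\right].
\end{equation*}
The crucial identity is $f_M(s) = \mathrm{vol}(M)(s-1/2)\tan\pi(s-1/2)$, which follows from \eqref{definitOf f_M} together with $(1/2-s)\tan\pi(1/2-s) = (s-1/2)\tan\pi(s-1/2)$, and which coincides with the dominant term in \eqref{DefEtaLogDer}. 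Hence $\eta_M'/\eta_M(s) - f_M(s) = O(\log T)$: the $\Gamma'/\Gamma$ pieces contribute $O(\log T)$ by Stirling, and the elliptic sum decays exponentially in $|t|$, as already exploited in the proof of Lemma \ref{PraghLindBound}. Since $|f_M(s)| \asymp \mathrm{vol}(M)\,T$, dividing yields
\begin{equation*}
\frac{\eta_M'/\eta_M(s)}{f_M(s)} = 1 + O\!\left(\frac{\log T}{T}\right),
\end{equation*}
which produces the leading $1$ in $\widetilde Z_{M,j}(w) = 1 + Z_{M,j}(w)$. The remaining contributions, after division by $f_M(s)$, are: $(j-1)(f_M'/f_M)(s)/f_M(s) = O(1/T^2)$ (using $f_M'/f_M(s) = 1/(s-1/2) + O(e^{-T})$); $(K_M'/K_M)(s)/f_M(s) = O(\log T/T)$ by Stirling; for $i = 0$, the term $(Z_M'/Z_M)(w)/f_M(s)$, which by \eqref{BoundDer} with $k = 0$ and $|f_M(s)| \asymp T$ is $O\bigl((T\log T)^{2-2\sigma_1}/((\sigma_1-1/2)T)\bigr)$; and for $1 \leq i \leq j-1$ the inductive bound \eqref{ZtildaM,iDeriv} gives $O\bigl((T\log T)^{2-2\sigma_1}\log T/((\sigma_1-1/2)T^2)\bigr)$, which is strictly subdominant. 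This establishes \eqref{Z_M,iDeriv} with $k = 0$. Differentiating the displayed recursion once more and invoking \eqref{BoundDer} with $k = 1$ for $D_M'(w)$---all other pieces being bounded by the $k=0$ bound times $O(\log T)$, via the inductive hypothesis applied to $Z_{M,i}'$---yields \eqref{Z_M,iDeriv} for $k = 1$. Finally, \eqref{ZtildaM,iDeriv} follows from the identity $\widetilde Z_{M,j}'/\widetilde Z_{M,j} = Z_{M,j}'/(1 + Z_{M,j})$ and the two bounds just established.

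\emph{Main obstacle.} The heart of the argument is the cancellation $\eta_M'/\eta_M(s) - f_M(s) = O(\log T)$, which is what allows the constant $1$ to separate cleanly out of $\widetilde Z_{M,j}(w) = 1 + Z_{M,j}(w)$ at each inductive stage. Once this cancellation is in hand, the rest is bookkeeping of $\log T$ factors and $(\sigma_1 - 1/2)^{-1}$ denominators; however, keeping every error uniform down to $\sigma_1 = 1/2 + \delta_1$ requires the distance-$\delta$ separation from integer points, since that is where $f_M$, $f_M'/f_M$, and the $\Gamma'/\Gamma$ factors develop singularities.
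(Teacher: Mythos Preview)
Your proof is correct and follows essentially the same inductive strategy as the paper's. The only difference is cosmetic: the paper telescopes the recursion by subtracting consecutive levels, obtaining
\[
\widetilde Z_{M,j+1}(w)=\widetilde Z_{M,j}(w)+\frac{1}{f_M(s)}\Bigl(\frac{f_M'}{f_M}(s)-\frac{\widetilde Z_{M,j}'}{\widetilde Z_{M,j}}(w)\Bigr),
\]
so the constant $1$ is inherited from the previous step rather than re-extracted via the cancellation $\eta_M'/\eta_M(s)-f_M(s)=O(\log T)$ that you carry out explicitly at each stage; both routes are equivalent and yield the same bounds.
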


\begin{proof}
We will prove the statement by induction in $j\geq 1$. When $j=1$,
we use formula \eqref{Z1def}, which we differentiate,
use the bound on the growth of the derivative of the
digamma function (see formula 6.4.12. in \cite{AbSt64})
and the bound \eqref{BoundDer} with $k=0$ or $k=1$.  These computations,
which are elementary, allows one to prove \eqref{Z_M,iDeriv}
for $\sigma_1 \geq 1/2 + \delta_1>1/2$ in the case when $j=1$.
In addition, by writing

\vskip .10in
\begin{equation*}
\frac{\widetilde{Z}_{M,1}'}{\widetilde{Z}_{M,1}}(\sigma_1 \pm
iT)=\frac{Z_{M,1}'(\sigma_1 \pm iT)}{1+Z_{M,1}(\sigma_1 \pm iT)}=O\left( \frac{(T
\log T)^{2-2\sigma_1} \log T}{(\sigma_1 -1/2)T}\right)
\,\,\,\,\,\textrm{\rm as $T \rightarrow \infty$.}
\end{equation*}

\vskip .10in
\noindent
With all this, we have proved \eqref{ZtildaM,iDeriv} for $j=1$.

\vskip .10in
Assume now that \eqref{Z_M,iDeriv} and
\eqref{ZtildaM,iDeriv} hold true for all $1\leq m \leq j$. Then,
by \eqref{defZ_M,i_tilda} we get

\vskip .10in
$$
1+Z_{M,j+1}(s)=\widetilde{Z}_{M,j+1}(s)=1+Z_{M,j}(s)+\frac{1}{f_M(s)}\left( \frac{f_M'}{f_M}(s)-
\frac{\widetilde{Z}_{M,k\j}'}{\widetilde{Z}_{M,j}}(s) \right).
$$

\vskip .10in
\noindent
Therefore, by the inductive assumption on
$\widetilde{Z}_{M,j}' /\widetilde{Z}_{M,j}$ and $Z_{M,j}$, we have

\vskip .10in
$$
Z_{M,j+1}(\sigma_1 \pm iT)= O \left( \frac{(T \log T)^{2-2\sigma_1} \log T}{(\sigma_1
-1/2)T}\right)
\,\,\,\,\,\textrm{\rm as $T \rightarrow \infty$.}
$$

\vskip .10in
In other words, \eqref{Z_M,iDeriv} holds true with $m=j+1$.  In addition,

\vskip .10in
$$
\frac{\widetilde{Z}_{M,j+1}'}{\widetilde{Z}_{M,j+1}}(\sigma_1 \pm
iT)=\frac{Z_{M,j+1}'(\sigma_1 \pm iT)}{1+Z_{M,j+1}(\sigma_1 \pm iT)}=O\left(
\frac{(T \log T)^{2-2\sigma_1} \log T}{(\sigma_1 -1/2)T}\right)
\,\,\,\,\,\textrm{\rm as $T \rightarrow \infty$.}
$$

\vskip .10in
\noindent
In other words, \eqref{ZtildaM,iDeriv} holds
true for $m=j+1$, which completes the proof.
\end{proof}

\vskip .10in
For any integer $k \geq 2$, let us define

\vskip .10in
$$
a_{M,k}:= (-1)^{k-1} a_{M} \log ^{k-1} A_{M}
$$

\vskip .10in
\noindent
where we set $a_{M,1}:= a_M$.
The analogue of the function $X_M (s)$, defined by \eqref{definofX_M}, is

\vskip .10in
\begin{equation} \label{definofX_M,k}
X_{M,k}(s):=\frac{A_M^{s}}{a_{M,k}}(Z_M H_M)^{(k)}(s).
\end{equation}

\vskip .10in
\noindent
where, of course, $X_{M,1}(s)=X_M(s)$.

\vskip .10in
\begin{lemma} \label{X_M,k boundInfty}
For any integer $k\geq 1$, there exists constants $\sigma _{k} > 1$  and $0<c_{\Gamma,k }<1$
such that for all $\sigma =\Re(s)\geq \sigma _{k}$,

\vskip .10in
$$
X_{M,k}(s)=1+O(c_{\Gamma,k }^{\sigma })\neq 0%
\,\,\,\,\,\textrm{\rm as $\sigma \rightarrow +\infty$.}
$$
\end{lemma}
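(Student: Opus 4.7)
The plan is to mimic the proof of Lemma \ref{X_MboundInfty}, replacing one differentiation by $k$ via the Leibniz rule applied to $F^{(k)} = (F \cdot L)^{(k-1)}$, where we abbreviate $F = Z_M H_M$ and $L = F'/F$. By Lemma \ref{SeriesRepLogDerZ_M H_M}, $L$ admits an absolutely (and uniformly, on compact subsets) convergent generalized Dirichlet series
$$L(s) = \sum_{P \in \mathcal{H}(\Gamma)} \frac{\Lambda(P)}{N(P)^{s}} + \sum_{i=1}^{\infty}\frac{b(q_i)}{q_i^{s}}$$
on the half-plane $\Re(s) > \sigma_0'$. The set of exponents $\{\log N(P)\}\cup\{\log q_i\}$ is a discrete subset of $(0,\infty)$ whose smallest element is $\log A_M$, and by the very case distinction in \eqref{definitionofa1}--\eqref{definitionofa1two} the total coefficient carried by the smallest exponent is exactly $a_M$. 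Termwise differentiation (justified by absolute convergence) then gives, for each fixed $m \geq 0$,
$$L^{(m)}(s) = \frac{(-\log A_M)^{m}\,a_M}{A_M^{s}}\Bigl(1 + O(c_{1}^{-\Re(s)})\Bigr) \qquad \text{as } \Re(s) \to \infty,$$
for some $c_1 > 1$; the error is controlled by the strictly positive gap between $\log A_M$ and the next-smallest exponent. In particular, taking $m = k-1$ produces precisely $a_{M,k} = (-1)^{k-1}a_M\log^{k-1}A_M$ as the leading coefficient.

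Next, from the Euler product \eqref{EulProd} together with \eqref{Dirichletseriespart} one has $F(s) = 1 + O(B^{-\Re(s)})$ for some $B > 1$, and Cauchy's integral formula on a disk of fixed radius inside the half-plane of absolute convergence shows $F^{(j)}(s) = O(A_M^{-\Re(s)})$ for every $j \geq 1$, up to polynomial factors in $\Re(s)$. Apply the Leibniz rule:
$$F^{(k)}(s) = \sum_{j=0}^{k-1}\binom{k-1}{j} F^{(j)}(s)\,L^{(k-1-j)}(s).$$
The $j=0$ summand equals
$$F(s)\,L^{(k-1)}(s) = \bigl(1 + O(B^{-\Re(s)})\bigr)\,\frac{a_{M,k}}{A_M^{s}}\bigl(1 + O(c_1^{-\Re(s)})\bigr) = \frac{a_{M,k}}{A_M^{s}}\bigl(1 + O(c^{-\Re(s)})\bigr)$$
for some $c > 1$, while each summand with $j \geq 1$ is a product of one factor of size $A_M^{-\Re(s)}$ and another of size $A_M^{-\Re(s)}$, hence of order $A_M^{-2\Re(s)}$, which is negligible compared with $A_M^{-\Re(s)}$. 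Multiplying by $A_M^{s}/a_{M,k}$ then yields $X_{M,k}(s) = 1 + O(c_{\Gamma,k}^{\sigma})$ with $c_{\Gamma,k} := 1/c \in (0,1)$, which also gives non-vanishing for all sufficiently large $\sigma$.

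The main technical wrinkle is that differentiating $L$ introduces factors $(-\log\mu_n)^{m}$ in the $n$-th Dirichlet term, which grow with $n$; one must verify that these do not destroy the gap between $\log A_M$ and the next-smallest exponent in the expansion. However, the factors grow only polynomially in $\log \mu_n$, while the spectral gap provides exponential decay in $\Re(s)$, so choosing $c_1$ slightly smaller than the genuine ratio between the two smallest exponents absorbs them uniformly for $m \leq k-1$. One could alternatively proceed by induction on $k$, starting from the $k=1$ case established in Lemma \ref{X_MboundInfty} and differentiating the asymptotic expansion termwise; the direct Leibniz route is preferable here because it exhibits the coefficient $a_{M,k}$ as an immediate consequence of its definition rather than as the outcome of successive differentiations.
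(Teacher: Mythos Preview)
Your argument is correct. It differs from the paper's proof mainly in organization rather than in substance. The paper proceeds by induction: it writes $(Z_MH_M)^{(k)}(s)=(Z_MH_M)(s)\,\mathcal{D}_k(s)$, derives the recursion $\mathcal{D}_{k+1}=\mathcal{D}_k\mathcal{D}_1+\mathcal{D}_k'$, and then tracks the leading Dirichlet term through the recursion, observing that $\mathcal{D}_k'$ dominates $\mathcal{D}_k\mathcal{D}_1$ because the latter has its smallest exponent equal to $2\log A_M$. Your Leibniz expansion $F^{(k)}=(FL)^{(k-1)}=\sum_{j}\binom{k-1}{j}F^{(j)}L^{(k-1-j)}$ unwinds this induction in one stroke: the $j=0$ term isolates $L^{(k-1)}$, whose leading coefficient is visibly $(-\log A_M)^{k-1}a_M=a_{M,k}$, and the $j\ge 1$ terms are all $O(A_M^{-2\sigma})$ for the same reason the paper's product $\mathcal{D}_k\mathcal{D}_1$ is. What you gain is that the appearance of $a_{M,k}$ is immediate from the definition rather than emerging step by step; what the paper's inductive version gains is a slightly cleaner bookkeeping of the error terms, since the recursion keeps everything packaged as a single Dirichlet series $\mathcal{D}_k$. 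One small point: your sentence ``$F(s)=1+O(B^{-\Re(s)})$ for some $B>1$'' should really say $B\ge A_M$, since you need $F^{(j)}(s)=O(A_M^{-\sigma})$ in the next line; this is true because the smallest exponent in the Dirichlet expansion of $F-1$ is exactly $\log A_M$, but it is worth stating explicitly.
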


\vskip .10in
\begin{proof} For $k=1$, the statement is Lemma \ref{X_MboundInfty}. Furthermore, from the proof of Lemma \ref{X_MboundInfty} and the definition of constants $A_M$ and $a_{M,1}$, we see that, for $\Re (s) \gg 0$

\vskip .10in
$$
\mathcal{D}_1(s):= \underset{\left\{ P\right\}  \in \mathcal{H}(\Gamma)}{\sum }\frac{\Lambda (P)}{
N(P)^{s}}+\sum_{i=1}^{\infty}\frac{b\left( q_{i}\right) }{q_{i}^{s}} = \frac{a_{M,1}}{A_M^{s}} \left( 1+O \left(\frac{1}{A_{\Gamma, 1}^{\Re (s) }}\right)\right)
\,\,\,\,\,\textrm{\rm as $\Re (s) \rightarrow +\infty$,}
$$

\vskip .10in
\noindent
for some constant $A_{\Gamma,1} >1$. Therefore,

\vskip .10in
\begin{equation} \label{DerivDirichlet}
\left( Z_M H_M\right) ^{\prime }(s)=Z_M(s)H_M(s)\mathcal{D}%
_{1}(s),
\end{equation}

\vskip .10in
\noindent
where $\mathcal{D}_1(s)$ is a Dirichlet series, converging absolutely for Re $%
s>\sigma _{1}$, for sufficiently large $\sigma_{1}$,
with the leading term equal to $a_{M,1}\cdot A_{M}^{-s}$ as $\Re (s) \rightarrow +\infty$.

\vskip .10in
Let us define, for $k \geq 1$ and $\Re (s) \gg 0$

\vskip .10in
$$
\left( Z_M H_M\right) ^{(k)}(s)=Z_M(s)H_M(s)\mathcal{D}_{k}(s).
$$

\vskip .10in
\noindent
We claim that $\mathcal{D}_{k}(s)$ is a
Dirichlet series with the leading term equal to $a_{M,k}\cdot A_M^{-s}$
as $\Re (s) \rightarrow +\infty$ .
The statement is obviously true for $k=1$. We assume that it is
true for all $1\leq j \leq k$. Differentiating the above equation
for $\Re (s) \gg 0$, we get, from \eqref{DerivDirichlet}

\vskip .10in
$$
\left( Z_M H_M\right) ^{(k+1) }(s)=\left( Z_M H_M\right) (s)\left( \mathcal{D}_k(s)\mathcal{D}
_{1}(s)+\mathcal{D}_{k}^{\prime }(s)\right) :=\left( Z_M H_M\right) (s)\mathcal{D}_{k+1}(s).
$$

\vskip .10in
\noindent
Since $\mathcal{D}_{k}(s)$ is a Dirichlet series with with the
leading term equal to $a_{M,k}\cdot A_M^{-s}$ as $\Re s \rightarrow +\infty$, we see that $\mathcal{D}_k(s)\mathcal{D}
_{1}(s)$ is a Dirichlet series with leading term
equal to $(a_{M,k}a_{M,1})\cdot (A_M^{2})^{-s}$
as $\Re s \rightarrow +\infty$,
while $\mathcal{D}_{k}^{\prime }(s)$ is a Dirichlet series
with the leading term equal to $(- a_{M,k} \log (A_M))\cdot A_M^{-s}$ as $\Re s \rightarrow +\infty$.
By the definition of $A_M$, it is obvious that $A_M >1$, hence $(A_M^{2})^{-s} < A_M^{-s}$
for $\Re (s) \gg 0$. Therefore, $\mathcal{D}_{k+1}(s)$ is a Dirichlet series with the
leading term equal to $(- a_{M,k} \log (A_M))\cdot A_M^{-s}$ as $\Re s \rightarrow +\infty$.
By the definition of coefficients $a_{M,k}$ we have that $a_{M,k+1}=- a_{M,k} \log (A_M)$,
hence the inductive proof is complete.

\vskip .10in
For $\Re(s) = \sigma \gg 0$, we may write
$$
\left( Z_M H_M\right) ^{(k)}(s)=Z_M(s)H_M(s) \frac{a_{M,k}}{A_M ^{s}}
\left( 1 + O(A_{\Gamma, k} ^{- \sigma}) \right )
\,\,\,\,\,\textrm{\rm as $\Re (s) \rightarrow \infty$.}
$$

\vskip .10in
\noindent
Since

\vskip .10in
$$
Z_M(s)=1+O \left(\frac{1}{N(P_{00})^{\Re (s)}}\right )  \text{  and   } H_M(s)=
1+O \left(\frac{1}{r_{1}^{2\Re (s)}} \right)
\,\,\,\,\,\textrm{\rm as $\Re (s) \rightarrow \infty$,}
$$

\vskip .10in
\noindent
there exists $\sigma_k \geq1$ and a constant $C_{\Gamma ,k}>1$ such that for $\Re s > \sigma_k$, we have

\vskip .10in
$$
(Z_M H_M)^{(k)}(s)=\frac{a_{M,k}}{A_M^{s}} \left[ 1+O \left(\frac{1}{C_{\Gamma,k}^{\Re (s)}}\right )\right]
\,\,\,\,\,\textrm{\rm as $\Re (s) \rightarrow \infty$.}
$$

\vskip .10in
\noindent
Setting $c_{\Gamma ,k}=1/C_{\Gamma ,k}$ completes the proof.
\end{proof}

\vskip .10in
\begin{lemma} \label{derivboundArgFork}
For arbitrary $\epsilon>0$, $t \geq 1$ and $\sigma_2\geq 1$
such that $-\sigma_2$ is not a pole of $(Z_MH_M)$ we have, for any positive integer $k$,

\vskip .10in
$$
\left( Z_M H_M\right) ^{(k) }(\sigma +it)=\left\{
\begin{array}{ll}
O\left( \exp \epsilon t\right), &\text{\,\,\, for \,\,\,}\frac{1}{2}\leq
\sigma \leq \sigma _{0} \\[3mm]
O\left( \exp (1/2-\sigma +\epsilon)t\right) &\text{\,\,\, for \,\,\,}-\sigma _{2}\leq \sigma
< 1/2,%
\end{array}%
\right.
$$
as $t \rightarrow \infty$.
\end{lemma}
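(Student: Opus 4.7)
The plan is to mirror the proof of Lemma \ref{derivboundArg} for the case $k=1$, but now invoke the Cauchy integral formula at order $k$, namely $f^{(k)}(s)=(k!/2\pi i)\int_{C}f(z)(z-s)^{-(k+1)}dz$, in place of the first-order version used there. The two crucial inputs are the Phragmén--Lindelöf bounds on $Z_M H_M$ itself that were established as intermediate steps \eqref{B1} and \eqref{B2} in the proof of Lemma \ref{derivboundArg}: for every $\epsilon>0$ and $t\ge1$,
\[
(Z_MH_M)(\sigma+it)=O(\exp(\epsilon t))\quad\text{for }1/2\le\sigma\le\sigma_0,
\]
\[
(Z_MH_M)(\sigma+it)=O(\exp((1/2-\sigma+\epsilon)t))\quad\text{for }-\sigma_2\le\sigma\le1/2.
\]

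For the first range $1/2\le\sigma\le\sigma_0$, I would apply the Cauchy formula on a circle $C$ of small fixed radius $r<\epsilon$ centered at $\sigma+it$. Shrinking $r$ if necessary so that $C$ remains inside the strip $1/2-\epsilon\le\Re(z)\le\sigma_0+\epsilon$, the bound $(Z_MH_M)(z)=O(\exp(2\epsilon t))$ holds uniformly on $C$ by combining \eqref{B1} and \eqref{B2}. Estimating the integral trivially yields $(Z_MH_M)^{(k)}(\sigma+it)=O(k!\,r^{-k}\exp(2\epsilon t))$; relabelling $\epsilon\mapsto\epsilon/2$ and absorbing the $\epsilon$-dependent constant $k!\,r^{-k}$ into the implied constant gives the first claim.

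For the second range $-\sigma_2\le\sigma<1/2$, the same Cauchy integral on a circle of radius $r$ centered at $\sigma+it$ gives
\[
(Z_MH_M)^{(k)}(\sigma+it)=O\!\left(\frac{k!}{r^{k}}\exp((1/2-\sigma+r+\epsilon)t)\right),
\]
since $1/2-\Re(z)+\epsilon\le 1/2-\sigma+r+\epsilon$ on $C$, with the argument extending past $\Re(z)=1/2$ because \eqref{B2} provides an even smaller exponent on that side. Choosing $r=\epsilon$ and relabelling $\epsilon$ delivers the second claim.

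The only real point of care, and what I would call the main (rather minor) obstacle, is to ensure that the contour $C$ stays inside the holomorphic region of $Z_MH_M$: the poles of $Z_MH_M$ are isolated on the real axis (see the divisor description on page 498 of \cite{Hej83}), so for $t\ge1$ and $r$ sufficiently small $C$ is safely away from them. One may also need to enlarge $\sigma_2$ slightly so that for $\sigma$ close to $-\sigma_2$ the circle does not leave the strip on which \eqref{B1} was proved, but this is harmless since $\sigma_2$ is arbitrary subject only to $-\sigma_2$ avoiding the real poles. Apart from this bookkeeping, the argument is a direct transcription of the $k=1$ case, with the Cauchy kernel raised to the $(k+1)$st power.
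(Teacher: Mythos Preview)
Your argument is correct but takes a genuinely different route from the paper. You apply the $k$-th order Cauchy integral formula directly to $Z_{M}H_{M}$, leveraging the bounds \eqref{B1} and \eqref{B2} on $Z_{M}H_{M}$ itself that were already established in the proof of Lemma~\ref{derivboundArg}. The paper instead proceeds by induction on $k$: for $1/2\le\sigma\le\sigma_{0}$ it applies the first-order Cauchy formula to $(Z_{M}H_{M})^{(k)}$ (assuming the bound at stage $k$), while for $-\sigma_{2}\le\sigma<1/2$ it uses the higher-derivative functional equation \eqref{Z_M H_M(n)} together with Lemma~\ref{Z1derivbound} to bound $(Z_{M}H_{M})^{(k+1)}$ on the line $\Re(s)=-\sigma_{2}$, and then invokes Phragm\'en--Lindel\"of between $\Re(s)=-\sigma_{2}$ and $\Re(s)=1/2$. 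Your approach is more elementary and self-contained, avoiding both the inductive structure and the auxiliary machinery of the functions $\widetilde{Z}_{M,j}$; the paper's approach, while heavier, exercises the functional equation \eqref{Z_M H_M(n)} and the growth estimates of Lemma~\ref{Z1derivbound}, which are in any case needed later in the proof of Theorem~\ref{higherDerTh}.
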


\vskip .10in
\begin{proof}
When $k=1$, the statement is proved in Lemma \ref{derivboundArg}. Assume that the
statement of Lemma holds for an integer $k\geq 1$. Then for $ 1/2\leq \Re (s)= \sigma \leq \sigma _{0}$
the Cauchy integral formula yields

\vskip .10in
$$
(Z_M H_M)^{(k+1)}(s)=\frac{1}{2\pi i}\underset{C}{\int }\frac{\left( Z_M H_M\right)^{(k)}
(z)}{\left( z-s\right) ^{2}}dz
$$

\vskip .10in
\noindent
where $C$ is a circle of a small, fixed radius $r<\epsilon$, centered at $s$.
Using the inductive assumption on $\left( Z_M H_M\right)^{(k)}(z)$, we then get the bounds

\vskip .10in
$$
\left( Z_M H_M\right) ^{(k+1) }(\sigma +it) =O\left(\exp((r+\epsilon)t)/r\right) =
O\left(\exp(2\epsilon t)\right),
$$

\vskip .10in
\noindent
for $1/2 \leq \sigma \leq \sigma_0$ and $t\geq 1$. This proves the first part of Lemma for $\left( Z_M H_M\right)^{(k+1)}(z)$, hence, the first part of the Lemma holds true for all $k\geq1$.

\vskip .10in
In the case when $\sigma < 1/2$, we employ the functional equation \eqref{Z_M H_M(n)} for $(Z_M H_M)^{(k)}$
to deduce that

\vskip .10in
\begin{align}
&\vert \left( Z_{M}H_{M}\right) ^{(k+1)}(-\sigma_2 +it)\vert =
\vert \left( Z_{M}H_{M}\right) ^{(k)}(-\sigma_2 +it)\vert \notag \\ & \hskip .5in
\cdot  \left | \left[ k\frac{f^{\prime }}{f}(-\sigma_2 +it)+%
\frac{\eta_M ^{\prime }}{\eta_M }(-\sigma_2 +it)-\frac{K_M^{\prime }}{K_M}(-\sigma_2 +it)-\overset{k}{%
\underset{i=0}{\sum }}\frac{\widetilde{Z}_{M,i}^{\prime }}{\widetilde{Z}_{M,i}}%
(1+\sigma_2-it) \right] \right| . \notag
\end{align}

\vskip .10in
\noindent
Since $\sigma_2 \geq 1$, we have $Z_M '/Z_M (1+\sigma_2 - it) =O(1)$ as $t \rightarrow + \infty$. Furthermore, formula \eqref{ZtildaM,iDeriv} and the same computations as in the proof of Lemma \ref{derivboundArg} imply that

\vskip .10in
$$
k\frac{f^{\prime }}{f}(-\sigma_2 +it)+%
\frac{\eta_M ^{\prime }}{\eta_M }(-\sigma_2 +it)-\frac{K_M^{\prime }}{K_M}(-\sigma_2 +it)-\overset{k}{%
\underset{i=0}{\sum }}\frac{\widetilde{Z}_{M,i}^{\prime }}{\widetilde{Z}_{M,i}}%
(1+\sigma_2-it)=O(t) \,\,\,\,\,\textrm{\rm as $t \rightarrow \infty$,}
$$

\vskip .10in
\noindent
since the leading term in the above expression is
$\mathrm{vol} (M) (1/2 + \sigma_2 -it) \tan (\pi (1/2 +\sigma_2 -it))$.
By the inductive assumption on $(Z_M H_M)^{(k)} (-\sigma_2 +it)$, we get

\vskip .10in
$$
\left\vert \left( Z_M H_M\right) ^{(k+1)}(-\sigma _{2}+it)\right\vert
=O\left( \exp \left( \left( \frac{1}{2}+\sigma _{2}+\epsilon \right)
\mathrm{vol}(M) t\right) \right) ,\ \ \text{as \ }t\rightarrow
\infty .
$$

\vskip .10in
\noindent
As in the proof of Lemma \ref{derivboundArg}, one applies
the Phragmen-Lindel\"{o}f theorem to the function $(Z_M H_M)^{(k+1)}$
in the open sector bounded by the lines $\Im (s)=1$, $\Re (s)= -\sigma_2$ and $\Re (s)=1/2$.
As a result, the proof of the second
part of the Lemma is complete for $(Z_M H_M)^{(k+1)}$.
\end{proof}

\vskip .20in
\subsection{Distribution of zeros of $(Z_M H_M)^{(k)}$}

\vskip .10in
The following theorem is the analogue of the Main Theorem for
zeros of higher derivatives of $(Z_M H_M)$.

\vskip .10in
\begin{theorem} \label{higherDerTh}
With the notation as above, the
following statements are true for any positive integer $k$.

\vskip .10in
a) For $\sigma <1/2$, there exist $t_0 >0$ such that $\left( Z_{M}H_{M}\right)
^{(k)}(\sigma +it)\neq 0$ for all $\left|
t\right|>t_{0}$.

\vskip .10in
b) \begin{equation} \label{vertDisKthDer}
N_{\mathrm{ver}}(T;(Z_{M}H_{M})^{(k)})=N_{\mathrm{ver}}(T;(Z_{M}H_{M})')
+ o(T)\,\,\,\,\,\text{\rm as $T \rightarrow \infty$.}
\end{equation}

\vskip .10in
c) \begin{align} \label{horDisKthDer}
N_{\mathrm{hor}}(T;(Z_{M}H_{M})^{(k)})&=N_{\mathrm{hor}}(T;(Z_M
H_{M})') + \frac{(k-1)T}{2 \pi} \left[ \log \left( T \cdot
\mathrm{vol}(M)\right)
-1\right] \notag \\ &
- \frac{T}{2 \pi} \log ((k-1) \log A_M) +o(T)
\,\,\,\,\,\text{\rm as $T \rightarrow \infty$.}
\end{align}
\end{theorem}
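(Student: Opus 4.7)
The plan is to parallel Sections 3--5, substituting the functional equation \eqref{Z_M H_M(n)} for the $k$-th derivative in place of \eqref{DerivFunctEq}, and using Lemmas \ref{Z1derivbound}, \ref{X_M,k boundInfty}, and \ref{derivboundArgFork} in place of the corresponding lemmas of Section 4. For part (a), I would take the logarithmic derivative of \eqref{Z_M H_M(n)} on the vertical line $\Re(s) = \sigma < 1/2$. The leading real part as $\vert t \vert \to \infty$ is $-\mathrm{vol}(M)\,t$, arising from $k f_M'/f_M + \eta_M'/\eta_M$; the contributions from $K_M'/K_M(s)$, $Z_M'/Z_M(1-s)$, and the $\widetilde{Z}_{M,j}'/\widetilde{Z}_{M,j}(1-s)$ factors are of lower order by Stirling's formula, \eqref{BoundDer}, and Lemma \ref{Z1derivbound}. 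Combined with the fact that $(Z_M H_M)$ has only finitely many non-trivial zeros in $\Re(s) < 1/2$, this forces $(Z_M H_M)^{(k)}(\sigma + it) \neq 0$ for $\vert t \vert > t_0$, which proves part (a).

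For parts (b) and (c), the plan is to apply Littlewood's theorem to $X_{M,k}(s)$ on the rectangle $R(a, T')$ used in Section 5, producing an analogue of \eqref{mainsum} with integrals $I_1, I_2, I_3, I_4$. The bounds $I_2, I_3 = O(1)$ follow verbatim from the $k=1$ case using Lemma \ref{X_M,k boundInfty}, and $I_4 = o(T)$ follows from the Jensen-type argument of Subsection 5.2 with Lemma \ref{derivboundArgFork} in place of Lemma \ref{derivboundArg}. The whole content reduces to evaluating $I_1$. Using \eqref{Z_M H_M(n)} I would decompose
\begin{align*}
\log\vert X_{M,k}(a+it)\vert &= a\log A_M - \log\vert a_{M,k}\vert + k\log\vert f_M\vert + \log\vert \eta_M\vert + \log\vert K_M^{-1}\vert \\
&\quad + \log\vert Z_M(1-a-it)\vert + \sum_{j=1}^{k}\log\vert \widetilde{Z}_{M,j}(1-a-it)\vert.
\end{align*}
By Lemma \ref{Z1derivbound}, each of the last $k+1$ terms integrates to $O((T\log T)^{2a}) = o(T)$, while the other five integrate exactly as in \eqref{I11}--\eqref{I122}, except that the $k$ copies of $f_M$ produce $k[T\log T + T(\log\mathrm{vol}(M) - 1)]$ by \eqref{Intf_m} in place of a single copy.

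Assembling these contributions, substituting $a/2$ for $a$ in the resulting identity, subtracting, and dividing by $a/2$ as in Subsection 5.3, yields part (b): the $k$-dependent shifts $(k-1)T\log T$ and $\log\vert a_M\vert - \log\vert a_{M,k}\vert = -(k-1)\log\log A_M$ are subdominant relative to the leading $\mathrm{vol}(M)T^2/(4\pi)$ and are absorbed into the $o(T)$ remainder. Part (c) then follows from \eqref{sums_for_c)}, in which these same two shifts are no longer dominated by the $T^2$ term and therefore appear explicitly in the stated asymptotic \eqref{horDisKthDer}.

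The main technical obstacle, and the reason for the inductive construction \eqref{defZ_M,i_tilda}, is to ensure that each of the $k$ factors $\widetilde{Z}_{M,j}$ stays uniformly close to $1$ on the line $\Re(s) = 1-a$ for $a$ close to $1/2$, so that the iterated recursion does not accumulate uncontrolled errors. This is precisely the content of Lemma \ref{Z1derivbound}; once it is in hand, the $k$-dependence of the final asymptotic becomes mechanical bookkeeping following the proof of the Main Theorem.
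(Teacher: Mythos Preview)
Your proposal is correct and follows essentially the same approach as the paper's proof. Both argue part (a) by computing the ratio $(Z_MH_M)^{(k+1)}/(Z_MH_M)^{(k)}$ via the logarithmic derivative of \eqref{Z_M H_M(n)} and showing its real part is $-\mathrm{vol}(M)\,t + o(t)$; both prove (b) and (c) by applying Littlewood's theorem to $X_{M,k}$, bounding $I_{2,k},I_{3,k}=O(1)$ and $I_{4,k}=o(T)$ exactly as you describe, and evaluating $I_{1,k}$ through the factorisation \eqref{Z_M H_M(n)}. Two small remarks: in part (a) the dominant contribution $-\mathrm{vol}(M)\,t$ comes from $\eta_M'/\eta_M$ alone (the term $kf_M'/f_M$ is $O(1/t)$), and the nonvanishing of $(Z_MH_M)^{(k)}$ really follows by \emph{induction} on $k$ with base case the Main Theorem part (a), not directly from the finiteness of zeros of $Z_MH_M$ itself; you should make that induction explicit. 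Also, the bound on $\int\log|Z_M(1-a-it)|\,dt$ is \eqref{I13} rather than Lemma \ref{Z1derivbound}, though the estimate is the same.
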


\vskip .10in
\begin{proof}
We first outline the proof of part (a).  For $k\geq 2$,  $\sigma <1/2$ and $s=\sigma \pm
iT$ equation \eqref{Z_M H_M(n)} yields

\vskip .10in
\begin{align} \label{ZH_kthderiv}
\frac{\left( Z_{M}H_{M}\right) ^{(k)}}{\left( Z_{M}H_{M}\right) ^{(k-1)}}%
(s)&=\log \left( \left( Z_{M}H_{M}\right) ^{(k-1)}(s)\right) ^{\prime } \notag \\&
=(k-1)\frac{f^{\prime }}{f}(s)+\frac{\eta_M ^{\prime }}{\eta_M }(s)-\frac{%
K_M^{\prime }}{K_M}(s)-\frac{Z_{M}^{\prime }}{Z_{M}}(1-s)-\overset{k-1}{\underset%
{i=1}{\sum }}\frac{\widetilde{Z}_{M,i}^{\prime
}}{\widetilde{Z}_{M,i}}(1-s)
\end{align}

\vskip .10in
\noindent
We now apply \eqref{ZtildaM,iDeriv} with $\sigma_1 = 1 - \sigma >1/2$ and \eqref{BoundDer} to deduce that

\vskip .10in
$$
\frac{Z_{M}^{\prime }}{Z_{M}}(1-s)+\overset{k-1}{\underset%
{i=1}{\sum }}\frac{\widetilde{Z}_{M,i}^{\prime
}}{\widetilde{Z}_{M,i}}(1-s)=O\left(
\frac{(T \log T)^{2\sigma} \log T}{(1/2-\sigma)}\right)\,\,\,\,\,\text{\rm as $T \rightarrow \infty$.}
$$

\vskip .10in
\noindent
Since $\Re (\eta_M'/ \eta_M (\sigma \pm iT)) = - \mathrm{vol}(M)t + O(\log t)$ and $K_M' /K_M (\sigma \pm it) =O(\log t)$ as $t \rightarrow +\infty$, we immediately deduce from \eqref{ZH_kthderiv} that

\vskip .10in
\begin{equation*}
\textrm{Re}\left( -\frac{\left( Z_{M}H_{M}\right) ^{(k)}}{\left(
Z_{M}H_{M}\right) ^{(k-1)}}(\sigma \pm it)\right) =
\mathrm{vol}(M) t+O\left( \max \left \{ \log t, \frac{( t \log
t)^{2\sigma} \log t}{(1/2-\sigma)} \right \}\right)
\,\,\,\,\,\textrm{\rm as $t \rightarrow +\infty$,}
\end{equation*}%

\vskip .10in
\noindent
for any $\sigma<1/2$. This proves part a).

\vskip .10in
The proof of parts b) and c) closely follows
lines of the proof of parts b) and c) of the Main Theorem. We fix
a large positive number $T$ and choose number $T'$ to be a bounded distance from $T$ such that
$T'$ is distinct from the imaginary part of any zero of $Z_M H_M$. We fix a number
$a \in (0,1/2)$ and use part a) of the Theorem to choose $t_{0}>0$ to be the number such that
$(Z_M H_M)^{(k)}(\sigma + it) \neq 0$ for all $\sigma \leq a$ and
$\vert t \vert > t_0$. Let $\sigma_0 $ be a constant such that $\sigma_0 \geq \max\{ \sigma_0' , \sigma_k\}$, where
$\sigma_0'$ is defined in Lemma \ref{SeriesRepLogDerZ_M H_M} and
$\sigma_k$ is defined in Lemma \ref{X_M,k boundInfty}.

\vskip .10in
We apply Littlewood's theorem to the function $X_{M,k}(s)$,
defined by \eqref{definofX_M,k} which is holomorphic in the rectangle
$R(a,T^{\prime })$ with vertices $a+it_{0}$, $\sigma _{0}+it_{0}$,
$\sigma _{0}+iT^{\prime }$, $a+iT^{\prime }$.  The resulting formula is

\vskip .10in
\begin{align} \label{mainsum_k}
2 \pi \underset{t_{0}<\gamma^{(k) } <T^{\prime }, \text{
 } \beta^{(k)} >a}{\underset{\rho ^{(k) } =\beta
^{(k) }+i\gamma^{(k) } }{\sum }}\left( \beta ^{(k)}-a\right)
&=\underset{t_{0}}{\overset{T^{\prime }}{\int }}\log \left\vert X_{M,k}(a+it)\right\vert dt-%
\underset{t_{0}}{\overset{T^{\prime }}{\int }}\log \left\vert X_{M,k}(\sigma
_{0}+it)\right\vert dt \\ &
-\underset{a}{\overset{\sigma _{0}}{\int }}\arg X_{M,k}(\sigma +it_{0})d\sigma +%
\underset{a}{\overset{\sigma _{0}}{\int }}\arg X_{M,k}(\sigma +iT^{\prime
})d\sigma \\&=I_{1,k}+I_{2,k}+I_{3,k}+I_{4,k},
\end{align}

\vskip .10in
\noindent
where $\rho ^{(k)}$ denotes the zero of $(Z_M H_M)^{(k)}$. By the
choice of $t_0$, the sum on the left-hand side of
\eqref{mainsum_k} is actually taken over all zeros $\rho ^{(k)}$
of $(Z_M H_M)^{(k)}$ with imaginary part in the interval
$(t_0,T')$.

\vskip .10in
Trivially, $I_{3,k}=O(1)$ as $T \rightarrow + \infty$. The application of
Lemma \ref{X_M,k boundInfty} immediately yields that
$I_{2,k}=O(1)$ as $T \rightarrow + \infty$, once we apply the
same method as in evaluation of $I_2$.

\vskip .10in
One can follow the
steps of the proof that $ \vert \arg X_M(\sigma + iT') \vert
=o(T)$ as $T \rightarrow +\infty$ in the present setting.  One uses function $X_{M,k}$
instead of $X_M$ and Lemma \ref{derivboundArgFork}
instead of Lemma \ref{derivboundArg}.  From this,  we deduce that $\vert \arg
X_{M,k}(\sigma + iT') \vert =o(T)$ as $T \rightarrow + \infty$.
Therefore, it is left to evaluate $I_{1,k}$.

\vskip .10in
From definition of $X_{M,k}$, using the functional equation
\eqref{Z_M H_M(n)} for $(Z_M H_M)^{(k)}$, we get for $k \geq2$, the expression

\vskip .10in
\begin{multline} \label{I_1,k}
I_{1,k} = \underset{t_{0}}{\overset{T^{\prime }}{\int }}\log
\left\vert A_M ^{(a+it)}a_{M,k}^{-1}\right\vert dt + k
\underset{t_{0}}{\overset{T^{\prime }}{\int }} \log \vert
f_M(a+it) \vert dt + \underset{t_{0}}{\overset{T^{\prime }}{\int
}} \log \vert \eta_M (a+it) \vert dt + \\
+ \underset{t_{0}}{\overset{T^{\prime }}{\int }} \log \vert
K_M^{-1} (a+it) \vert dt + \underset{t_{0}}{\overset{T^{\prime
}}{\int }} \log \vert Z_M (1-a-it) \vert dt + \sum_{i=1}^{k-1}
\underset{t_{0}}{\overset{T^{\prime }}{\int }} \log \vert 1 +
Z_{M,i} (1-a-it) \vert dt.
\end{multline}

\vskip .10in
\noindent
By employing equation \eqref{Z_M,iDeriv} with $j=0$, we get

\vskip .10in
$$
\log \vert 1+ Z_{M,i} (1-a-it) \vert = O(\vert Z_{M,i} (1-a-it) \vert)=O(t^{2a-1}
(\log t)^{2a} ) \,\,\,\,\,\text{\rm as $t \rightarrow \infty$}
$$

\vskip .10in
\noindent
and for all $i=1,...,k-1$.  Hence,

\vskip .10in
$$
\underset{t_{0}}{\overset{T^{\prime }}{\int }} \log \vert 1+ Z_{M,i}
(1-a-it) \vert dt = O((T \log T)^{2a})
\,\,\,\,\,\text{\rm as $T \rightarrow \infty$}
$$

\vskip .10in
\noindent
and for all $i=1,...,k-1$. Substituting this equation, together
with \eqref{Intf_m}, \eqref{I122}, \eqref{I121} and \eqref{I13}
into \eqref{I_1,k}, we immediately deduce that

\vskip .10in
\begin{equation}
I_{1,k}= \left( \frac{1}{2} -a\right)
\frac{\mathrm{vol}(M)}{2}T^{2} + \left( \frac{n_1}{2} +k \right) T
\log T + C_{M,a,k} T   + O((T \log
  T)^{2a}) \,\,\,\,\,\text{\rm as $T \rightarrow \infty$,}\notag
\end{equation}

\vskip .10in
\noindent
where

\vskip .10in
\begin{align}
C_{M,a,k} &= 2\left (a-\frac{1}{2} \right)n_1 \log 2 +
a \log A_M - \log \vert
a_{M,k} \vert  \notag
\\&+k (\log (\mathrm{vol}(M))-1) + 2a \log \mathfrak{g}_1 -
  \log \vert d(1) \vert -\frac{n_1}{2} (\log \pi +1).\notag
\end{align}

\vskip .10in
\noindent
Combining this equation with the
bounds on $I_{2,k}$, $I_{3,k}$ and $I_{4,k}$ and
\eqref{mainsum_k}, we get

\vskip .10in
\begin{equation} \label{mainsum_k_evaluated}
 2 \pi \underset{t_{0}<\gamma^{(k) } <T^{\prime
}}{\underset{\rho ^{(k) }=\beta ^{(k) }+i\gamma^{(k) } }{\sum }}\left(
\beta ^{(k)}-a\right) =\left( \frac{1}{2} -a\right)
\frac{\mathrm{vol}(M)}{2}T^{2} + \left( \frac{n_1}{2} +k \right) T
\log T + C_{M,a,k} T  + o(T)
\,\,\,\,\,\textrm{\rm as $T \rightarrow \infty$.}
\end{equation}

\vskip .10in
Replacing $a$ by $a/2$ in
\eqref{mainsum_k_evaluated} and subtracting proves part b). Part
c) is proved by employing an analogue of equation
\eqref{sums_for_c)}, with $\beta'$ and $\rho'$ replaced by
$\beta^{(k)}$ and $\rho^{(k)}$.
\end{proof}

\vskip .10in
\begin{remark}\rm
The statement of Theorem \ref{higherDerTh} is true in the case of co-compact Riemann surfaces
$\Gamma \setminus \mathbb{H}$ when taking $H_M =1$ and $A_{M} = \exp(\ell_{M,0})$ in \eqref{vertDisKthDer}
and \eqref{horDisKthDer}.

\vskip .10in
In the case when $\Gamma \setminus \mathbb{H}$ is compact the statement b) of
Theorem \ref{higherDerTh} was announced by Luo in \cite{Luo05}, with the weaker error term
$O(T)$.  As one can see, we put consider effort into the analysis yielding the error term $o(T)$,
and the structure of the constant $C_{M,a,k}$ is, in our opinion, fascinating.
\end{remark}

\vskip .10in
\begin{remark} \rm
From the formula \eqref{horDisKthDer} for the horizontal distribution of zeros of $(Z_MH_M)^{(k)}$,
 we see that the differentiation of $(Z_MH_M)^{(k)}$ increases the sum $N_{\mathrm{hor}}(T; (Z_MH_M)^{(k)})$
 by  the quantity $ [( \mathrm{vol}(M)/ 2 \pi) \cdot T \log T - O(T)] $ as $T \rightarrow \infty$.
 Hence, after each differentiation, zeros of $(Z_MH_M)'$ move further to the right of $1/2$. Since
 every zero of $(Z_MH_M)'$ on the line $\Re (s)=1/2$ (up to finitely many of them) is a multiple
 zero of $Z_M$,  this result fully supports the "bounded multiplicities conjecture". To recall, the
 "bounded multiplicities conjecture" asserts that the order of every multiple zero
 of $Z_M$ is uniformly bounded, or, equivalently, that the dimension of every
 eigenspace associated to the discrete eigenvalue of the Laplacian on $M$ is uniformly bounded,
 with a bound depending solely upon $M$.
\end{remark}

\vskip .20in
\section{Concluding Remarks}

\vskip .10in
\subsection{Revisiting Weyl's law}

\vskip .10in
Weyl's law for an arbitrary finite volume hyperbolic Riemann
surface $M$ is the following asymptotic formula, which we quote
from \cite{Hej83}, p. 466:

\vskip .10in
\begin{equation}\label{Weyl}
N_{M,\textrm{dis}}(T) + N_{M,\textrm{con}}(T) =
\frac{\textrm{vol}(M)}{4\pi}T^{2} - \frac{n_{1}}{\pi} T \log T +
\frac{n_1 T}{\pi}(1-\log 2) + O\left(T/\log T\right) \,\,\,\,\,\textrm{as $T
\rightarrow \infty$},
\end{equation}

\vskip .10in
\noindent
where

\vskip .10in
$$
N_{M,\textrm{dis}}(T) = \#\{s = 1/2 + it | Z_{M}(s) = 0
\,\,\,\,\,\textrm{and}\,\,\,\,\,0\leq t \leq T\}
$$

\vskip .10in
\noindent
and
\vskip .10in
$$
N_{M,\textrm{con}}(T) = \frac{1}{4\pi}
\int\limits_{-T}^{T}\frac{-\phi'_{M}}{\phi_{M}}(1/2+it)dt.
$$

\vskip .10in
\noindent
For a proof
of Weyl's law, we refer to Theorem 2.28 on page 466 of
\cite{Hej83} and Theorem 7.3 of \cite{Ve90}.

\vskip .10in The term $N_{M,\textrm{dis}}(T)$ counts the number of
zeros of the Selberg zeta function $Z_{M}(s)$ on the critical line
$\textrm{Re}(s) = 1/2$, whereas the term $N_{M,\textrm{con}}(T)$
is related to the number of zeros of $Z_{M}(s)$ off the critical
line but within the critical strip.  In the following proposition,
we will relate the counting function
$N_{\textrm{ver}}(T;\phi_{M})$ with the function $N_{M,\textrm{con}}(T)$, showing that the constant
$\mathfrak{g}_{1}$ appears in the resulting asymptotic formula.

\vskip .10in
\begin{proposition} \label{WeylNew} There exists a sequence
$\{T_{n}\}$ of positive numbers tending toward infinity such that,
with the notation as above, we have the asymptotic formula
\end{proposition}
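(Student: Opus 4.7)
The plan is to start from the decomposition $\phi_M(s)=K_M(s)H_M(s)$ established in section~1.3 and split the defining integral as
$$
N_{M,\textrm{con}}(T) = -\frac{1}{4\pi}\int_{-T}^{T}\frac{K_M'}{K_M}\!\left(\tfrac{1}{2}+it\right)dt - \frac{1}{4\pi}\int_{-T}^{T}\frac{H_M'}{H_M}\!\left(\tfrac{1}{2}+it\right)dt.
$$
The first piece I would evaluate in closed form using Stirling's formula on the Gamma quotient in $K_M$; the linear exponent $e^{c_{1}s+c_{2}}$ contributes a term of size $\tfrac{c_{1}T}{2\pi}=-\tfrac{T\log \mathfrak{g}_{1}}{\pi}$, which is precisely how the constant $\mathfrak{g}_{1}$ will enter the restated Weyl law. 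The Gamma part contributes a $T\log T$ main term and $O(\log T)$ remainder, paralleling the computation leading to \eqref{I122}.

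For the second piece I would apply the argument principle (Littlewood's theorem as recalled in section~2.8) to $H_M$ on the rectangle with vertices $\tfrac{1}{2}\pm iT$ and $\sigma_{0}\pm iT$, with $\sigma_{0}>1$ chosen so that $H_M$ is zero-free and uniformly close to $1$ on $\Re(s)=\sigma_{0}$, as in the proof of Lemma~\ref{X_MboundInfty}. The contribution on the right vertical side is then $O(1)$, and the integral on the critical line is $2\pi i\cdot N_{\textrm{ver}}(T;H_{M})$ up to the two horizontal-side contributions $\int_{1/2}^{\sigma_0}\log|H_M(\sigma\pm iT)|\,d\sigma$. These last pieces are to be controlled by a Phragm\'en--Lindel\"of bound for $H_M=\phi_M/K_M$ obtained from Lemma~\ref{PraghLindBound} together with Stirling on $K_M$; this yields an $O(T)$ bound in general and an $o(T)$ bound along a carefully chosen sequence of heights. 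Since $K_M$ is zero-free in the critical strip, one has $N_{\textrm{ver}}(T;\phi_M)=N_{\textrm{ver}}(T;H_M)+O(1)$, and further $N_{\textrm{ver}}(T;Z_MH_M)=N_{M,\textrm{dis}}(T)+N_{\textrm{ver}}(T;H_M)+O(1)$, so the computation ultimately re-expresses the left-hand side of \eqref{Weyl} in terms of $N_{\textrm{ver}}(T;Z_MH_M)$ with the explicit $\mathfrak{g}_{1}$-dependent constant coming out of the $K_M$ integral.

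The purpose of the subsequence $\{T_n\}$ is precisely to ensure that the horizontal cuts $\Im(s)=\pm T_n$ stay away from neighborhoods of zeros of $\phi_M$, so that the horizontal contribution is $o(T_n)$ rather than merely $O(T_n)$. Such a sequence exists because the number of zeros of $\phi_M$ (equivalently, of $H_M$) in a unit-height interval at height $T$ is $o(T)$, a fact that is itself a consequence of \eqref{Weyl}. The main obstacle of the proof is exactly this step: extracting the $o(T_n)$ bound quantitatively from the Phragm\'en--Lindel\"of estimate and the zero-density input. Only then does the $\mathfrak{g}_{1}$-correction survive past the error term; the remaining work is routine Stirling analysis and combining the two integrals with Weyl's classical law \eqref{Weyl}.
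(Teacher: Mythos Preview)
Your overall strategy—decompose $\phi_M'/\phi_M=K_M'/K_M+H_M'/H_M$ and relate the critical-line integral of $H_M'/H_M$ to $N_{\textrm{ver}}(T;H_M)=N_{\textrm{ver}}(T;\phi_M)+O(1)$ via a contour argument on a rectangle with right edge at $\Re(s)=\sigma_0$—is exactly the paper's approach. The only cosmetic difference is that the paper applies the argument principle to $\phi_M$ itself and performs the $K/H$ split on the \emph{right} vertical side $\Re(s)=\sigma_0'$ rather than on the critical line; this avoids the spurious poles of $K_M'/K_M$ and $H_M'/H_M$ at $s=1/2$ that your split introduces (they cancel in the sum, but each piece separately is improper).

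There are, however, two genuine issues. First, you are mixing up the argument principle with Littlewood's theorem: if you integrate $H_M'/H_M$ around the rectangle, the horizontal contributions are $\int_{1/2}^{\sigma_0}\frac{H_M'}{H_M}(\sigma\pm iT)\,d\sigma$, not $\int_{1/2}^{\sigma_0}\log|H_M(\sigma\pm iT)|\,d\sigma$; Littlewood's theorem would instead produce $N_{\textrm{hor}}$, not $N_{\textrm{ver}}$. Relatedly, the Gamma ratio in $K_M'/K_M$ contributes only $O(\log T)$ (since $\psi(s-1/2)-\psi(s)=O(1/|s|)$), not a $T\log T$ main term—this is consistent with the proposition having no $T\log T$ term. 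The reference to \eqref{I122} is misleading because there one integrates $\log|K_M^{-1}|$, not $K_M'/K_M$.

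Second, and more seriously, your control of the horizontal sides is too weak for the stated proposition. A Phragm\'en--Lindel\"of bound on $|H_M|$ does not bound $H_M'/H_M$; even granting polynomial growth of $|H_M|$ in the strip (which is available, since $|\phi_M(1/2+it)|=1$ and $|K_M^{-1}(1/2+it)|=O(t^{n_1/2})$), you would only deduce $\log|H_M|=O(\log T)$, which is not the quantity appearing in the argument-principle horizontal pieces. What is actually needed is a Cram\'er-type bound $\frac{\phi_M'}{\phi_M}(\sigma+iT_n)=O(\log T_n)$ uniformly for $1/2\le\sigma\le\sigma_0'$ along a well-chosen sequence $\{T_n\}$. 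The paper obtains this by invoking the fact (from \cite{JL93}, \cite{JL94}) that $\phi_M$ is of regularized product type of order $0$, which supplies precisely such a sequence with $I_1(T_n),I_2(T_n)=O(\log T_n)$. Your sketch only produces $o(T_n)$, and even that is not justified by the Phragm\'en--Lindel\"of route you describe; consequently the $-\frac{\log\mathfrak g_1}{\pi}T_n$ term would not survive with the sharp $O(\log T_n)$ remainder that the proposition asserts.
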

$$
N_{\textrm{ver}}(T_{n};\phi_{M}) = N_{M,\textrm{con}}(T_{n})
-\frac{\log \mathfrak{g}_{1}}{\pi }T_{n}+O\left(\log
T_{n}\right) \,\,\,\,\,\textrm{as $n \rightarrow \infty$.}
$$

\vskip .10in
\begin{proof} Let $R(T)$ denote the rectangle with vertices
$1/2-iT$, $\sigma _{0}^{\prime }-iT$, $\sigma _{0}^{\prime }+iT$,
$1/2+iT$, where $\sigma _{0}^{\prime }>\sigma_0$, where$\sigma_0$ is defined in section 1.4..
Therefore, the series

\vskip .10in
$$
\frac{H_{M}^{\prime }}{H_{M}}\left( s\right) =\sum_{i=1}^{\infty}\frac{b\left( q_{i}\right) }{q_{i}^{s}}
$$

\vskip .10in
\noindent
converges uniformly and absolutely for Re $s \geq \sigma _{0}^{\prime
}$,and all zeros of $\phi_M$ with real part greater than 1/2
lie inside $R(T)$. Recall that the zeros of $\phi_{M}$
appear in pairs of the form $\rho$ and $\overline{\rho}$.  As a
result, the proposition will follow by studying the expression

\vskip .10in
$$
2N_{\textrm{ver}}(T;\phi_{M})=\frac{1}{2\pi i}\underset{R(T)}{\overset{}{\int }}%
\frac{\phi_M ^{\prime }}{\phi_M }(s)ds.
$$

\vskip .10in
\noindent
Let us write

\vskip .10in
$$
2N_{\textrm{ver}}(T;\phi_{M})=-\frac{1}{2\pi }\underset{-T}{\overset{T}{\int }}%
\frac{\phi_{M} ^{\prime }}{\phi_{M} }(\frac{1}{2}+it)dt+\frac{1}{2\pi }\underset{-T}{%
\overset{T}{\int }}\frac{\phi_{M} ^{\prime }}{\phi_{M} }(\sigma
_{0}^{\prime }+it)dt+I_{1}(T)+I_{2}(T)
$$

\vskip .10in
\noindent
where $I_{1}$ and $I_{2}$ denote the integrals along the
horizontal lines which bound $R(T)$.  Theorem 7.1 on page 412 of
\cite{JL93} proves that $\phi_{M}$ is of regularized product type
with order $M=0$.  As a result, from Chapter 1, section 4 of
\cite{JL94}, we have the existence of a sequence of real numbers
$\{T_{n}\}$ tending to infinity such that

\vskip .10in
$$
I_{1}(T_{n}) =O(\log T_{n}) \,\,\,\,\, \textrm{and}\,\,\,\,\,
I_{2}(T_{n}) =O(\log T_{n}) \,\,\,\,\, \textrm{when $n \rightarrow
\infty$,}
$$

\vskip .10in
\noindent
so then

\vskip .10in
$$
2N_{\textrm{ver}}(T_n;\phi_{M})=-\frac{1}{2\pi }\underset{-T_n}{\overset{T_n}{\int }}%
\frac{\phi_{M} ^{\prime }}{\phi_{M} }(\frac{1}{2}+it)dt+\frac{1}{2\pi }\underset{-T_n}{%
\overset{T_n}{\int }}\frac{\phi_{M} ^{\prime }}{\phi_{M} }(\sigma
_{0}^{\prime }+it)dt +O(\log T_{n}) \,\,\,\,\, \textrm{when $n
\rightarrow \infty$.}
$$

\vskip .10in
\noindent
Using the notation as above, we now write

\vskip .10in
\begin{multline*}
\underset{-T_n}{\overset{T_n}{\int }}\frac{\phi_{M} ^{\prime
}}{\phi_{M} }(\sigma
_{0}^{\prime }+it)dt=\underset{-T_n}{\overset{T_n}{\int }}\frac{H_{M}^{\prime }}{H_{M}}%
(\sigma _{0}^{\prime }+it)dt+\underset{-T_n}{\overset{T_n}{\int }}\frac{%
K_{M}^{\prime }}{K_{M}}(\sigma _{0}^{\prime }+it)dt\\ =
\sum_{i=1}^{\infty}\frac{b\left( q_{i}\right) }{q_{i}^{\sigma
_{0}^{\prime }}}\underset{-T_n}{\overset{T_n}{\int }}\frac{dt}{q_{i}^{it}}%
-4T\log \mathfrak{g}_{1}+n_{1}\underset{-T_n}{\overset{T_n}{\int }}\left( \frac{\Gamma ^{\prime }
}{\Gamma }\left( \sigma _{0}^{\prime }+it-\frac{1}{2}\right)
-\frac{\Gamma ^{\prime }}{\Gamma }\left( \sigma _{0}^{\prime
}+it\right) \right)dt.
\end{multline*}

\vskip .10in
\noindent
Interchanging the sum and the integral above is justified by the fact that the series defining $H_M'/H_M (s)$  converges absolutely and uniformly for $\Re (s) >\sigma_0$.  Furthermore, we also have that

\vskip .10in
$$
\sum_{i=1}^{\infty}\frac{b\left( q_{i}\right) }{q_{i}^{\sigma
_{0}^{\prime }}}\underset{-T_n}{\overset{T_n}{\int
}}\frac{dt}{q_{i}^{it}} =O\left( 1\right)\,\,\,\,\,\textrm{as $n
\rightarrow \infty$,}
$$

\vskip .10in
\noindent
Using the series representation of the digamma function, we see that

\vskip .10in
$$
\frac{\Gamma ^{\prime }%
}{\Gamma }\left( \sigma _{0}^{\prime }+it-\frac{1}{2}\right)
-\frac{\Gamma ^{\prime }}{\Gamma }\left( \sigma _{0}^{\prime
}+it\right) = \sum_{k=0}^{\infty} \frac{-1/2}{(k+\sigma _{0}^{\prime }+it-1/2)(k+\sigma _{0}^{\prime }+it)}
$$

\vskip .10in
\noindent
The series on the right converges uniformly in $t \in [-T_n,T_n]$, hence

\vskip .10in
$$
\underset{-T_n}{\overset{T_n}{\int }}\left( \frac{\Gamma ^{\prime }%
}{\Gamma }\left( \sigma _{0}^{\prime }+it-\frac{1}{2}\right)
-\frac{\Gamma ^{\prime }}{\Gamma }\left( \sigma _{0}^{\prime
}+it\right) \right)dt = \sum_{k=0}^{\infty} \log \left[1- \frac{iT_n}{(k+\sigma _{0}^{\prime })(k+\sigma _{0}^{\prime }-1/2) +T_n^{2} + iT_n/2} \right].
$$

\vskip .10in
\noindent
Since

\vskip .10in
$$
\sum_{k=0}^{\infty}  \left \vert \log \left[1- \frac{iT}{(k+\sigma _{0}^{\prime })(k+\sigma _{0}^{\prime }-1/2) +T^{2} + iT/2} \right] \right \vert \ll \vert T \vert  \underset{0}{\overset{\infty}{\int }} \frac{dx}{x^{2}+T^{2}},
$$

\vskip .10in
\noindent
we get that

\vskip .10in
$$
\underset{-T_n}{\overset{T_n}{\int }}\left( \frac{\Gamma ^{\prime }%
}{\Gamma }\left( \sigma _{0}^{\prime }+it-\frac{1}{2}\right)
-\frac{\Gamma ^{\prime }}{\Gamma }\left( \sigma _{0}^{\prime
}+it\right) \right)dt =O(1)
\,\,\,\,\,\textrm{\rm as $n \rightarrow \infty$.}
$$

\vskip .10in
\noindent
With all this, the proof of the Proposition is complete.
\end{proof}

\vskip .10in
\begin{remark}\rm
The above proposition shows that the term $-(\log
\mathfrak{g}_{1}/\pi)T $ measures the discrepancy between the
number of zeros of $\phi_M $ with real part greater that $1/2$,
meaning $N_{\mathrm{ver}}(T;\phi_{M})$, and the quantity
$N_{M,\mathrm{con}}(T)$, appearing in the classical version of the Weyl's law.

\vskip .10in
Furthermore, one can restate Proposition \ref{WeylNew} as the relation representing Weyl's law

\vskip .10in
\begin{equation} \label{NewWeyl}
N_{\mathrm{ver}}(T_{n};Z_{M}H_M) =
\frac{\mathrm{vol}(M)}{4\pi}T_{n}^{2} - \frac{n_{1}}{\pi} T_{n}
\log T_n + \frac{T_n}{\pi} \left(n_1 (1-\log2) -\log \mathfrak{g}_{1} \right) + O\left(T_{n}/\log T_{n}\right),
\end{equation}

\vskip .10in
\noindent
as $n \rightarrow \infty$.
\end{remark}

\vskip .10in
A direct consequence of the relation \eqref{NewWeyl}, Main Theorem and Theorem \ref{higherDerTh} is the following reformulation of the Weyl's law:

\vskip .10in
\begin{corollary} \label{WeylNewCor}
There exist a sequence $\{T_n \}$ of positive real numbers tending to infinity such that, for every positive integer $k$
$$
N_{\mathrm{ver}}(T_{n};Z_{M}H_M) =N _{\mathrm{ver}}(T_{n};(Z_{M}H_M)^{(k)})- \frac{n_1}{\pi}T_n \log T_n + \frac{T_n}{2\pi} (2n_1 + \log A_M) + o(T_n),  \text{  as  } n \rightarrow \infty.
$$
\end{corollary}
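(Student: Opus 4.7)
The plan is to combine three ingredients already available in the paper: the restated Weyl's law \eqref{NewWeyl}, part (b) of the Main Theorem, and part (b) of Theorem \ref{higherDerTh}. The sequence $\{T_n\}$ will be precisely the one produced by Proposition \ref{WeylNew}; along this sequence, \eqref{NewWeyl} supplies a four-term asymptotic expansion for $N_{\mathrm{ver}}(T_n;Z_M H_M)$ with error $O(T_n/\log T_n)$, and the results of the previous sections supply the asymptotics for the derivative counting functions at arbitrary $T\to\infty$, hence in particular along $\{T_n\}$.

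First I would write down, for this fixed sequence $\{T_n\}$, the expansion \eqref{NewWeyl}
\[
N_{\mathrm{ver}}(T_n;Z_M H_M)=\frac{\mathrm{vol}(M)}{4\pi}T_n^2-\frac{n_1}{\pi}T_n\log T_n+\frac{T_n}{\pi}\bigl(n_1(1-\log 2)-\log\mathfrak{g}_1\bigr)+O(T_n/\log T_n).
\]
Next I would combine part (b) of the Main Theorem with the identity $N_{\mathrm{ver}}(T;(Z_M H_M)^{(k)})=N_{\mathrm{ver}}(T;(Z_M H_M)')+o(T)$ from part (b) of Theorem \ref{higherDerTh}, obtaining the single expansion
\[
N_{\mathrm{ver}}(T_n;(Z_M H_M)^{(k)})=\frac{\mathrm{vol}(M)}{4\pi}T_n^2-\frac{T_n}{2\pi}\bigl(\log A_M+2n_1\log 2+2\log\mathfrak{g}_1\bigr)+o(T_n),
\]
valid for every positive integer $k$ with the same lead coefficient.

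The remaining step is a direct subtraction: the $T_n^2$ terms cancel, and a short computation shows that the coefficient of $T_n$ simplifies cleanly, namely
\[
\frac{1}{\pi}\bigl(n_1(1-\log 2)-\log\mathfrak{g}_1\bigr)+\frac{1}{2\pi}\bigl(\log A_M+2n_1\log 2+2\log\mathfrak{g}_1\bigr)=\frac{1}{2\pi}(2n_1+\log A_M),
\]
while the $T_n\log T_n$ terms contribute $-(n_1/\pi)T_n\log T_n$ (the expansion for the derivative has no such term). Combining all error terms into a single $o(T_n)$ yields the asserted identity.

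There is no real obstacle here; the argument is entirely a book-keeping exercise once one has the three ingredients in hand. The only subtlety is that the restated Weyl's law is only known along the distinguished sequence $\{T_n\}$ furnished by Proposition \ref{WeylNew}, whereas the derivative asymptotics hold as $T\to\infty$ without restriction, so the conclusion is necessarily stated along $\{T_n\}$ as well. The algebraic cancellation of $\log 2$ and $\log\mathfrak{g}_1$ which produces the remarkably clean constant $(2n_1+\log A_M)/(2\pi)$ is the only place where one must be attentive to signs and factors.
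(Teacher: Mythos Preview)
Your proposal is correct and follows exactly the approach indicated by the paper, which merely states that the corollary is a direct consequence of \eqref{NewWeyl}, the Main Theorem, and Theorem \ref{higherDerTh}; you have simply carried out the subtraction and verified the cancellation of the $\log 2$ and $\log\mathfrak{g}_1$ terms explicitly.
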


\vskip .10in
\begin{remark}\rm
An interpretation of the constant $\log
\mathfrak{g}_{1}$, similar  to the one derived in Proposition \ref{WeylNew} is obtained in \cite{Fis87}, formula (3.4.15) on page 155, where it is shown, in our notation, that

\vskip .10in
$$
\log\mathfrak{g}_{1}= \lim_{x \rightarrow \infty} \lim_{y \rightarrow \infty} \left[2x \underset{0}{\overset{\infty}{\int }} \frac{N_{M,\mathrm{con}}(t) - N_{\mathrm{ver}}(t;\phi_{M})}{t} \left ( \frac{1}{t^{2}+x^{2}} - \frac{1}{t^{2}+y^{2}} \right) dt \right ].
$$

\vskip .10in
\noindent
A geometric interpretation of the constant $\mathfrak{g}_{1}$, in the case when the surface
has one cusp $\mathfrak{a}$ is derived on page 49 of \cite{Iw02}.  In that case,
$\mathfrak{g}_{1}^{-1}$ is the radius of the largest isometric circle
arising in the construction of the standard polygon for the group $\sigma_{\mathfrak{a}}^{-1} \Gamma \sigma_{\mathfrak{a}}$, where $\sigma_{\mathfrak{a}}$ denotes the scaling matrix of the cusp $\mathfrak{a}$.

\vskip .10in
As stated in the introduction, we do not know of a spectral or geometric interpretation of the constant
$\mathfrak{g}_{2}$, besides the trivial one which realizes $\mathfrak{g}_{2}$ as the second largest
denominator of the Dirichlet series portion of the scattering determinant.
Therefore, we view Corollary \ref{WeylNewCor} as giving rise to a new spectral invariant.
\end{remark}

\vskip .20in
\subsection{A comparison of counting functions}

\vskip .10in
In this section we will prove (\ref{countcomparison}) for $k$th derivatives, where $k\geq 1$.  In effect,
it is necessary to recall results from \cite{Hej83}, translate the
notation in \cite{Hej83} to the notation in the present paper,
then combine the result with (\ref{horSelbergderivcpt}) and parts
(c) of the Main Theorem and Theorem \ref{higherDerTh}.

\vskip .10in
Theorem 2.22 on page 456 of \cite{Hej83} states the
asymptotic relation

\vskip .10in
\begin{equation}\label{Hcount}
N_{\textrm{hor}}(T;H_{M})   =\frac{n_{1}}{2}\cdot \frac{T\log T}{2\pi }+\frac{T}{2}%
b_{6}+O(\log T)\,\,\,\,\,\textrm{as $T \rightarrow \infty$},
\end{equation}

\vskip .10in
\noindent
where the constant $b_{6}$ is given on the first line on page 448, by

\vskip .10in
$$
b_{6}=-\frac{%
n_{1}}{2\pi }-\frac{1}{\pi }\log \left\vert b_{2}\right\vert.
$$

\vskip .10in
\noindent
Note that in \cite{Hej83}, the author counts the zeros of $H_M$ in both
the upper and lower half-planes, whereas the counting function
$N_{\textrm{hor}}(T;H_{M})$ only considers those zeros in the
upper half-plane.  Recall that the zeros and poles of $H$ appear
symmetrically about the real axis.  As a result, the relation
(\ref{Hcount}) differs from Theorem 2.22, page 456 of
\cite{Hej83}, by a factor of two.  We now relate the constant
$b_{2}$ in (\ref{Hcount}) to the notation we employed above.

\vskip .10in  Equation (2.15) on page 445 of \cite{Hej83} gives
$$
\begin{array}{rl}
f_M (s) &\displaystyle =\mathfrak{g}_{1}^{2s-1}\varphi (s)=\mathfrak{g}_{1}^{2s-1}\pi ^{\frac{%
n_{1}}{2}}\left( \frac{\Gamma \left( s-\frac{1}{2}\right) }{\Gamma \left(
s\right) }\right) ^{n_{1}}\cdot \underset{n=1}{\overset{\infty }{\sum }}%
\frac{d(n)}{\mathfrak{g}_{n}^{2s}}\\[5mm]
&\displaystyle =\pi ^{\frac{n_{1}}{2}}\left( \frac{\Gamma \left( s-\frac{1}{2}\right) }{%
\Gamma \left( s\right) }\right) ^{n_{1}}\mathfrak{g}_{1}^{2s-1}\frac{d(1)}{%
\mathfrak{g}_{1}^{2s}}\left( 1+\overset{\infty }{\underset{n=2}{\sum }}\frac{%
a\left( n\right) }{r_{n}^{2s}}\right) \\[5mm] &\displaystyle
=b_{2}\left( \frac{\Gamma \left( s-\frac{1}{2}\right) }{\Gamma
\left(
s\right) }\right) ^{n_{1}}\cdot \underset{n=1}{\overset{\infty }{\sum }}%
\frac{\alpha _{n}}{Q_{n}^{2s}},
\end{array}%
$$

\vskip .10in
\noindent
where the last line is in the notation of page 446, line 5 of
\cite{Hej83} with $m=0$; see also equation (2.16) on page 445 of
\cite{Hej83}.  When comparing with our notation, as defined in
section 1.3 and section 2.3, we have that $Q_{n}=r_{n}$ and, more
importantly,

\vskip .10in
\begin{equation}\label{b2definition}
b_{2}=\pi ^{\frac{n_{1}}{2}}\mathfrak{g}_{1}^{-1}d(1).
\end{equation}

\vskip .10in
\noindent
By substituting (\ref{b2definition}) into (\ref{Hcount}), we are
able to rewrite Theorem 2.22 on page 456 of \cite{Hej83} as

\vskip .10in
\begin{equation}\label{horscattering}
N_{\textrm{hor}}(T;H_{M})  =\frac{n_{1}}{2}\cdot \frac{T\log T}{2\pi }-\frac{T}{2\pi }%
\left( \frac{n_{1}}{2}+\frac{n_{1}}{2}\log \pi +\log \left\vert
d(1)\right\vert -\log \mathfrak{g}_{1}\right) +O(\log
T)\,\,\,\,\,\textrm{as $T \rightarrow \infty$}.
\end{equation}

\vskip .10in
Comparing \eqref{horscattering} with part (c) of Main Theorem  we deduce that

\vskip .10in
$$
N_{\textrm{hor}}(T;(Z_{M}H_{M})') -N_{\textrm{hor}}(T;H_{M})
\displaystyle = \frac{T\log T}{2\pi } +\frac{T}{2\pi }C
+o(T)\,\,\,\,\,\textrm{as $T \rightarrow \infty$}.
$$

\vskip .10in
\noindent
with

\vskip .10in
$$
C = \frac{1}{2}\log A_M-\log \left\vert a_{M}\right\vert +\log
 \textrm{vol}(M) -1.
$$

\vskip .10in
\noindent
Assume that $e^{\ell_{M,0}} < \left(
\mathfrak{g}_{2}/\mathfrak{g}_{1}\right) ^{2}$, so then
$$
A_M =e^{\ell_{M,0}} \,\,\,\,\,\textrm{and}\,\,\,\,\, a_{M} =
\frac{m_{M,0}\ell_{M,0}}{1-e^{-\ell_{M,0}}},
$$

\vskip .10in
\noindent
from which we have that
$$
C = \log \left( \frac{2\textrm{vol}(M)\sinh(\ell_{M,0}/2)}{e \cdot
m_{M,0} \ell_{M,0}}\right).
$$

\vskip .10in
Let $\widetilde{M}$ be any co-compact hyperbolic Riemann
surface such that $\textrm{vol}(\widetilde{M}) = \textrm{vol}(M)$.
Assume that $M$ and $\widetilde{M}$ have systoles of equal length, and the same number of inconjugate
classes of systoles.  Then, using
(\ref{horSelbergderivcpt}), we arrive at the conclusion that

\vskip .10in
\begin{equation}\label{countcomparison2}
N_{\textrm{hor}}(T;(Z_{M}H_{M})') -N_{\textrm{hor}}(T;H_{M}) =
N_{\textrm{hor}}(T;Z'_{\widetilde{M}}) +o(T)
\,\,\,\,\,\textrm{as $T \rightarrow \infty$,}
\end{equation}

\vskip .10in
\noindent
as claimed in (\ref{countcomparison}).

\vskip .10in
Furthermore, when $e^{\ell_{M,0}} < \left(
\mathfrak{g}_{2}/\mathfrak{g}_{1}\right) ^{2}$, comparing \eqref{countcomparison2} with part c) of Theorem \ref{higherDerTh}, for $k\geq 2$ we arrive at

\vskip .10in
\begin{multline*}
N_{\textrm{hor}}(T;(Z_{M}H_{M})^{(k)}) -N_{\textrm{hor}}(T;H_{M})
\displaystyle = N_{\textrm{hor}}(T;Z'_{\widetilde{M}})+ \\
 + \frac{(k-1) T}{2\pi }[\log (T \mathrm{vol}(M)) -1] - \frac{T}{2 \pi} \log ((k-1) \ell_{M,0}) +o(T)
 \,\,\,\,\,\textrm{\rm as $T \rightarrow \infty$.}
\end{multline*}

\vskip .10in
\noindent
Then, from part c) of Theorem \ref{higherDerTh} applied to the zeta function $Z_{\widetilde{M}}$ we deduce
$$
N_{\textrm{hor}}(T;(Z_{M}H_{M})^{(k)}) -N_{\textrm{hor}}(T;H_{M})
\displaystyle = N_{\textrm{hor}}(T;(Z_{\widetilde{M}})^{(k)})
+o(T)\,\,\,\,\,\textrm{as $T \rightarrow \infty$}.
$$

\vskip .10in
\noindent
This proves (\ref{countcomparison}) for higher derivatives.

\vskip .10in
We find the comparison of counting functions, as
summarized in (\ref{countcomparison}) fascinating, especially
since the coefficients in the asymptotic expansions in
(\ref{Hcount}) and part (c) of the Main Theorem are somewhat
involved and dissimilar from other known asymptotic expansions.

\vskip .20in
\subsection{Concluding remarks}

We find the examples provided by studying $\Gamma_{0}^{+}(5)$ and
$\Gamma_{0}^{+}(6)$ particularly fascinating.  In a forthcoming article \cite{JST12}
with Holger Then, we further study various properties of the distribution
of eigenvalues on $\Gamma_{0}^{+}(N)$, for a squarefree, positive integer $N$.  In particular, we prove the groups
$\Gamma_{0}^{+}(5)$ and $\Gamma_{0}^{+}(6)$  have the same signature yet the coefficient
of $T$ in the Weyl's law expansion differs.  As stated above, we view this example as being
in support of the Phillips-Sarnak philosophy since the examples show that the
distribution of eigenvalues depends on more data than just the signature of the
group.

\vskip .10in
In \cite{CMS04} the authors defined $213$ genus zero subgroups of
which $171$ are associated to ``Moonshine''. It would be
interesting to compute the invariant $A_{M}$ for each of these
groups to see if further information regarding the groups,
possibly related to ``moonshine'', is uncovered.

\vskip .10in
Is it possible to explicitly determine an example of a surface where $e^{\ell_{M,0}} = (\mathfrak{g}_{2}/\mathfrak{g}_{1})^{2}$?
More generally, one could study the set of such surfaces, as a subset of moduli space.  Is the set of surfaces where
$e^{\ell_{M,0}} > (\mathfrak{g}_{2}/\mathfrak{g}_{1})^{2}$
a connected subset of moduli space, or are there several components?
Is there another characterization of surfaces where $e^{\ell_{M,0}} = (\mathfrak{g}_{2}/\mathfrak{g}_{1})^{2}$?
Many other basic questions can be easily posed, and we find these problems very interesting.

\vskip .10in
In general, one wonders if there is another interpretation for surfaces such that
$e^{\ell_{M,0}} = (\mathfrak{g}_{2}/\mathfrak{g}_{1})^{2}$.   J. Keating pointed
out that in many classical dynamical systems, one can have complex periodic orbits.  In the setting
of the Selberg trace formula, we were referred to \cite{CV88} and \cite{CV90}.  Perhaps the inequality
$e^{\ell_{M,0}} \neq (\mathfrak{g}_{2}/\mathfrak{g}_{1})^{2}$ is related to the existence of
complex orbits as in \cite{CV88} and \cite{CV90}.  It was also suggested to us that a connection
may exist with Wigner delay times associated to classical scattering problems, which is related
to Keating's remarks.   We thank Keating for allowing us to include his
speculative comments in this article, who wishes to emphasize that his suggestion
should not be viewed as a thought-out conjecture.  Nonetheless, the
authors believe it is important to include his intuitive comments.

\vskip .10in
In \cite{AJS09}, the authors determined the asymptotic behavior of Selberg's zeta function through degeneration
up to the critical line.  It would be interesting to study the asymptotic behavior of the zeros of the derivative
of Selberg's zeta function through degeneration, either in moduli space or through elliptic degeneration.

\vskip .10in
To come full circle, we return to the setting of the Riemann zeta function and speculate if one can attempt to mimic
results which follow from the Levinson-Montgomery article \cite{LM74}.  Specifically, we recall, as stated
in the Introduction, that Levinson used results from the distribution of zeros of $\zeta'_{\mathbb Q}$ to prove
that more than $1/3$ of the zeros of the Riemann zeta function lie on the critical line.  Can one follow a
similar investigation in the setting of the Selberg zeta function?  To do so, we note that a starting point
would be to establish an analogue of the approximate functional equation for the Selberg zeta function.  In the
case when $M$ is compact, the Riemann hypothesis is known for the Selberg zeta function, but if $M$ is not compact,
then one is facing the ideas behind the Phillips-Sarnak philosophy.  Results in this direction would be very
significant, and we plan to undertake the project in the near future.

\vskip .20in

\vskip .20in \noindent Jay Jorgenson \\ \noindent Department of
Mathematics\\ \noindent The City College of New York \\ \noindent
Convent Avenue at 138th Street \\ \noindent New York, NY 10031 USA
\\ \noindent e-mail: jjorgenson@mindspring.com

\vskip .20in \noindent Lejla Smajlovi\'c \\ \noindent Department
of Mathematics \\ \noindent University of Sarajevo \\ \noindent
Zmaja od Bosne 35, 71, 000 Sarajevo \\ \noindent Bosnia and
Herzegovina\\ \noindent e-mail: lejlas@pmf.unsa.ba

\end{document}